\numberwithin{equation}{section}
\theoremstyle{plain}
\newtheorem{theorem}{Theorem}[section]
\newtheorem{lemma}[theorem]{Lemma}
\newtheorem{corollary}[theorem]{Corollary}
\newtheorem{claim}[theorem]{Claim}
\newtheorem{proposition}[theorem]{Proposition}
\theoremstyle{definition}
\newtheorem{remark}[theorem]{Remark}
\newtheorem{?}[theorem]{Problem}
\NewDocumentCommand{\slanttext}{O{}m}{\rotatebox[origin=c]{45}{#2}}
\def\boxit#1{\leavevmode\hbox{\vrule\vtop{\vbox{\kern.33333pt\hrule\kern1pt\hbox{\kern1pt\vbox{#1}\kern1pt}}\kern1pt\hrule}\vrule}}
\begin{document}

\title[Hecke growth diagrams]{Hecke growth diagrams, and maximal increasing and decreasing sequences in fillings of stack polyominoes}

\author[T. Guo and G. Li]{Ting Guo and Gaofan Li}
\address[T. Guo \& G. Li]{MOE-LCSM, School of Mathematics and Statistics, Hunan Normal University, China.} 
\email{guoting@hunnu.edu.cn(T. Guo), 1636517624@qq.com(G. Li).}

\date{\today}

\begin{abstract} We establish a bijection between $01$-fillings of stack polyominoes with at most one $1$ per column and labelings of the corners along the top-right border of stack polyominoes. 
These labellings indicate the lengths of the longest increasing and decreasing chains of the largest rectangular region below and to the left of the corners.
Our results provide an alternative proof of Guo and Poznanovi\'c's theorem on the lengths of the longest increasing and decreasing chains have a symmetric joint distribution over $01$-fillings of stack polyomino.
Moreover, our results offer new perspective to Chen, Guo and Pang's result on the crossing number and the nesting number have a symmetric joint distribution over linked partitions.
In particular, our construction generalizes the growth diagram techniques of Rubey for the $01$-fillings of stack polyominoes with at most one $1$ per column and row. 

\end{abstract} 

\keywords{Hecke growth diagrams, Jeu de taquin, Maximal chains, Stack polyominoes}

\maketitle


\section{Introduction}

A $polyomino$ is a finite subset of $\mathbb{Z}^2$, where every element of $\mathbb{Z}^2$ is regarded as a square box. A {\it column} (resp. {\it row}) of a polyomino is the set of boxes along a vertical(resp. horizontal) line.
A polyomino is {\it row-convex} (resp. {\it column-convex}) if for any two boxes in a row (resp. column), the elements of $\mathbb{Z}^2$ between them are also boxes belong to the polyomino. If the polyomino is both row-convex and column-convex, we say that it is {\it convex}.
A polyomino is {\it intersection-free\/} if every two columns are {\it comparable\/}, i.e., the row-coordinates of one column form a subset of those of the other column. 
Equivalently, it is intersection-free if every two rows are comparable.

A polyomino is a {\it moon polyomino} if it is convex and intersection-free (e.g. Fig.\ref{fig:1}a).  We say that a moon polyomino is a {\it stack polyomino} if its rows are left justified (e.g. Fig.\ref{fig:1}b). We note that the term "stack polyomino" has been used in other papers to denote polyominoes with justified columns rather than rows. A {\it Ferrers diagram} (also called {\it Ferrers shape} or {\it Young diagram}) is a stack polyomino with weakly decreasing row widths, reading rows from top to bottom(e.g. Fig.\ref{fig:1}c).


\begin{figure}[h]
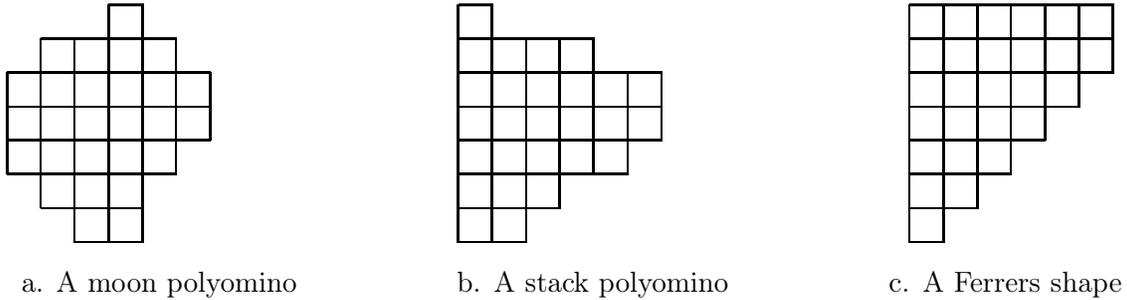

	$$
	\Einheit.15cm
	\PfadDicke{.5pt}
	\Pfad(0,6),222222222\endPfad
	\Pfad(3,3),222222222222222\endPfad
	\Pfad(6,0),222222222222222222\endPfad
	\Pfad(9,0),222222222222222222222\endPfad
	\Pfad(12,0),222222222222222222222\endPfad
	\Pfad(15,6),222222222222\endPfad
	\Pfad(18,9),222222\endPfad
	\Pfad(9,21),111\endPfad
	\Pfad(3,18),111111111111\endPfad
	\Pfad(0,15),111111111111111111\endPfad
	\Pfad(0,12),111111111111111111\endPfad
	\Pfad(0,9),111111111111111111\endPfad
	\Pfad(0,6),111111111111111\endPfad
	\Pfad(3,3),111111111\endPfad
	\Pfad(6,0),111111\endPfad
	\hbox{\hskip6cm}
	\PfadDicke{.5pt}
	\Pfad(0,0),111111\endPfad
	\Pfad(0,3),111111111\endPfad
	\Pfad(0,6),111111111111111\endPfad
	\Pfad(0,9),111111111111111111\endPfad
	\Pfad(0,12),111111111111111111\endPfad
	\Pfad(0,15),111111111111111111\endPfad
	\Pfad(0,18),111111111111\endPfad
	\Pfad(0,21),111\endPfad
	\Pfad(0,0),222222222222222222222\endPfad
	\Pfad(3,0),222222222222222222222\endPfad
	\Pfad(6,0),222222222222222222\endPfad
	\Pfad(9,3),222222222222222\endPfad
	\Pfad(12,6),222222222222\endPfad
	\Pfad(15,6),222222222\endPfad
	\Pfad(18,9),222222\endPfad
	\hbox{\hskip6cm}
	\PfadDicke{0.5pt}
	\Pfad(0,0),222222222222222222222\endPfad
	\Pfad(3,0),222222222222222222222\endPfad
	\Pfad(6,3),222222222222222222\endPfad
	\Pfad(9,6),222222222222222\endPfad
	\Pfad(12,9),222222222222\endPfad
	\Pfad(15,12),222222222\endPfad
	\Pfad(18,15),222222\endPfad
	\Pfad(0,0),111\endPfad
	\Pfad(0,3),111111\endPfad
	\Pfad(0,6),111111111\endPfad
	\Pfad(0,9),111111111111\endPfad
	\Pfad(0,12),111111111111111\endPfad
	\Pfad(0,15),111111111111111111\endPfad
	\Pfad(0,18),111111111111111111\endPfad
	\Pfad(0,21),111111111111111111\endPfad
	\hskip3cm
	$$
	\centerline{\small a. A moon polyomino
		\hskip2cm
		b. A stack polyomino
		\hskip2cm
		c. A Ferrers shape}
	\caption{A moon polyomino, a stack polyomino and a Ferrers shape.}
	\label{fig:1}
\end{figure}

In Fig.\ref{fig:2}, the polyomino is not a moon polyomino because the first column (from the left) and the last column are not intersection-free.

\begin{figure}
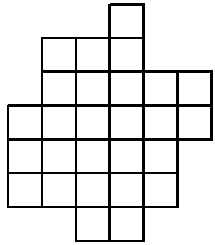

	$$
	\Einheit.15cm
	\PfadDicke{.5pt}
	\Pfad(0,3),222222222\endPfad
	\Pfad(3,3),222222222222222\endPfad
	\Pfad(6,0),222222222222222222\endPfad
	\Pfad(9,0),222222222222222222222\endPfad
	\Pfad(12,0),222222222222222222222\endPfad
	\Pfad(15,3),222222222222\endPfad
	\Pfad(18,9),222222\endPfad
	\Pfad(9,21),111\endPfad
	\Pfad(3,18),111111111\endPfad
	\Pfad(3,15),111111111111111\endPfad
	\Pfad(0,12),111111111111111111\endPfad
	\Pfad(0,9),111111111111111111\endPfad
	\Pfad(0,6),111111111111111\endPfad
	\Pfad(0,3),111111111111111\endPfad
	\Pfad(6,0),111111\endPfad
	\hskip3cm
	$$
	\caption{A polyomino but not a moon polyomino.}
	\label{fig:2}
\end{figure}

A {\it $01$-filling} of a moon polyomino is an assignment of either a $0$ or a $1$ to each box of the polyomino.
Usually, we adjust the convention for better visibility by representing the filling with suppressed $0$'s and replacing $1$'s by $X$'s.

A $partition$ is a weakly decreasing sequence $\lambda=(\lambda_{1},\lambda_{2},...,\lambda_{n})$ of positive integers. Supposing there are two partitions $\mu=(\mu_{1},\mu_{2},...,\mu_{l})$ and $\upsilon=(\upsilon_{1},\upsilon_{2},...,\upsilon_{m})$, $l\leq m$, then $\mu\subseteq\upsilon$ if and only if $\mu_{i}\leq\upsilon_{i}$ for all $i\leq l$.

To each partition $\lambda=(\lambda_{1},\lambda_{2},...,\lambda_{n})$, one associates its Ferrers shape with $\lambda_{i}$ boxes in the $i$th row for $i=1,2,...,n$. For example, the partition $\lambda=(6,6,5,4,3,2,1)$ corresponds to the Ferrers shape of Fig.\ref{fig:1}c. By the correspondence between partitions and Ferrers shapes, one can define the transpose of a partition.

The $conjugate$ of a partition $\lambda=(\lambda_{1},\lambda_{2},...,\lambda_{n})$ is the partition $\lambda^{'}=(\lambda_{1}^{'},\lambda_{2}^{'},...,\lambda_{\lambda_{1}}^{'})$, where $\lambda_{i}^{'}$ is the length of the {\it i-th} column in the Ferrers shape of $\lambda$.
For instance, the conjugate of partition $\lambda=(4,2,2,1)$ is the partition $\lambda^{'}=(4,3,1,1)$. 


The main focus of this paper is on constructing bijection between $01$-fillings of stack polyominoes with at most one $1$ per column and labelings of the corners along the top-right border of stack polyominoes, and investigating the increasing and decreasing chains in fillings of stack polyominoes.

The application of growth diagram for polyominoes has been extensively studied in prior research. In \cite{kra}, Krattenthaler used Fomin's growth diagrams to establish a bijection between $01$-fillings of Ferrers shape $F$, where each row and column contains at most one $1$, and sequences of partitions along the top-right border of $F$. This correspondence provided an alternative proof of the main results from \cite{CDDSY}
concerning crossings and nestings of matching and set partitions. Krattenthaler also extended this correspondence to arbitrary fillings of Ferrers shapes.
His motivation for proving these extensions was the result of Jonsson \cite{Jonsson} and Jonsson and Welker \cite{JW}, which state that the number of $01$-fillings of a stack polyomino $M$ with longest northeast chain of length $u$ depends only on the set of columns of $M$. (Rubey further extended Jonsson and Welker's results to moon polyominoes using a bijection based on the RSK insertion, jeu de taquin, and the inclusion and exclusion principle. In \cite{PY}, Poznanovi\'c and Yan given a completely bijective proof and extended these results to almost-moon polyominoes.)

In \cite{Rubey}, Rubey extended the growth diagram of Ferrers shapes to stack polyominoes. We note that, in Rubey's work, stack polyominoes refer to moon polyominoes where columns are bottom-justified. He used Fomin's growth diagrams and jeu de taquin to establish a bijection between $01$-fillings of stack polyomino $\mathcal{M}$, where each row and column contains at most one $1$, and labellings of corners along the top-right border of $\mathcal{M}$.

In this paper, we further extend Rubey's result to words, providing a bijection between 01-fillings of stack polyomino $\mathcal{M}$, where each column contains at most one $1$, and labellings of corners along the top-right border of $\mathcal{M}$. 
We show that these labellings indicate the lengths of the longest increasing and decreasing chains of the largest rectangular region below and to the left of the corners.

This paper is organized as follows: In Section 2, we provide the necessary background. In Section 3, we present the backward local rules of Hecke growth diagrames for the $01$-fillings of rectangulars. Section 4 contains the two main results of this paper. Section 5 shows the two alternative proofs of the results by Guo and Poznanovi\'c \cite{GP2020}, Chen, Guo and Pang \cite{CGP}, and we demonstrate our Hecke growth diagram generalizes Rubey's approach. Section 6 discusses an alternative labeling along the top-right border of stack polyominoes with a bottom-justified border.


\section{Preliminaries}

To establish the bijection that we mentioned in Section 1, we need several foundational tools: Hecke insertion \cite{BKSTY}, Hecke growth diagrams \cite{PP}, jeu de taquin for increasing tableaux \cite{TY09}, 
and the $K$-Knuth equivalence on words and increasing tableaux \cite{BuchSamuel,TY11}. For the convenience of readers,
we provide detailed description of these definitions in this section, which are also available in \cite{GP2020}.

\subsection{Hecke insertion}
$\newline$

Hecke insertion, introduced in \cite{BKSTY}, was developed for studying the stable Grothendieck polynomials. In this section, we follow \cite{BKSTY} to give a description of Hecke insertion.


A tableau of shape $\rho/\lambda$ is called an {\it increasing tableau} if the entries are strictly increase along each row and down each column. Let INC($\rho/\lambda$) be the set of these increasing tableaux of skew shape $\rho/\lambda$. If the set of entries constitute an integer interval with left endpoint 1, we call this increasing tableau a {\it standard increasing tableau}. The set of these standard increasing tableaux of skew shape $\rho/\lambda$ is denoted by INC$^{st}$($\rho/\lambda$).

%

An increasing tableau is called a {\it straight increasing tableau} if its shape forms a straight northwest border. The set of these straight increasing tableaux of shape $\lambda$ is denoted by SIT($\lambda$). If the entries constitute a consecutive integer sequence starting from $1$, we call this straight increasing tableau a {\it standard straight increasing tableau}, denoted by SIT$^{st}$($\lambda$).

The Hecke insertion is a procedure to insert a positive integer $x$ into a straight increasing tableau $Y$,
resulting in another straight increasing tableau $Z$, which either $Z$ has the same shape as $Y$ 
or it has one extra box $c$, we define the extra box $c$ the {\it terminate box}.
In the case when $Z$ has the same shape as $Y$, it also contains a special corner $c$, i.e. terminate box $c$, where the insertion algorithm terminated. We use a parameter $\alpha\in \lbrace 0, 1 \rbrace$ to distinguish these two case, 
which $\alpha$ is set to $1$ if and only if the corner $c$ is outside the shape of $Y$. 
Thus the complete output of the insertion algorithm is the triple $(Z, c, \alpha)$.
We write $(Z, c, \alpha)= (Y\xleftarrow{\,\mathrm{H}}x)$.

The Hecke insertion algorithm proceeds by inserting the integer $x$ into the first row of $Y$. 
This may modify this row, and possibly produce an {\it output integer}, 
which is then inserted into the second row of $Y$, etc. 
This process is repeated until an insertion does not produce an output integer. 
The rules for inserting an integer $x$ into a row $R$ is as follows.

If $x$ is larger than or equal to all entries in $R$, then no output integer is produced and the algorithm terminates.
If adding $x$ as a new box to the end of $R$ results in an increasing tableau, 
then $Z$ is the resulting tableau, $\alpha=1$, and terminate box $c$ is the box where $x$ was added.
If adding $x$ as a new box to the end of $R$ does not result in an increasing tableau, 
then let $Z=Y$, $\alpha=0$, and terminate box $c$ is the box at the bottom of the column of $Z$ 
containing the rightmost box of $R$.

Otherwise the integer $x$ is strictly smaller than some entry in $R$, 
and we let $y$ be the smallest integer in $R$ that is strictly larger than $x$.
If replacing $y$ with $x$ results in an increasing tableau, then replace $y$ with $x$ and insert $y$ into the next row.
If replacing $y$ with $x$ does not result in an increasing tableau, 
then insert $y$ into the next row and do not change $R$.

\begin{figure}
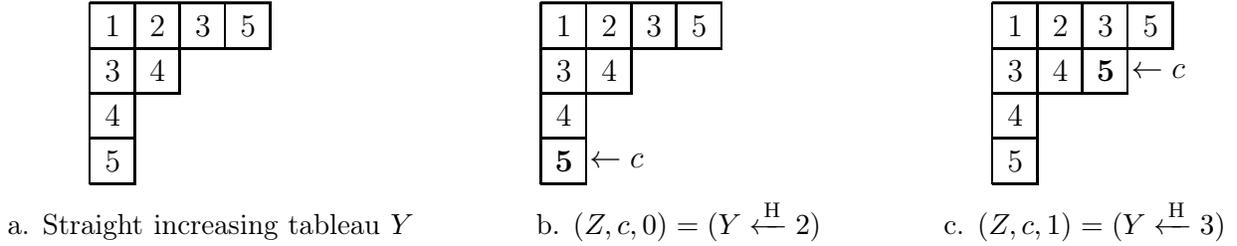

	$$
	\Einheit.2cm
	\PfadDicke{.5pt}
	\Pfad(0,0),222222222222\endPfad
	\Pfad(3,0),222222222222\endPfad
	\Pfad(6,6),222222\endPfad
	\Pfad(9,9),222\endPfad
	\Pfad(12,9),222\endPfad
	\Pfad(0,0),111\endPfad
	\Pfad(0,3),111\endPfad
	\Pfad(0,6),111111\endPfad
	\Pfad(0,9),111111111111\endPfad
	\Pfad(0,12),111111111111\endPfad
	\Label\ro{1}(1,10)
	\Label\ro{2}(4,10)
	\Label\ro{3}(7,10)
	\Label\ro{5}(10,10)
	\Label\ro{3}(1,7)
	\Label\ro{4}(4,7)
	\Label\ro{4}(1,4)
	\Label\ro{5}(1,1)
	\hbox{\hskip6cm}
	\Einheit.2cm
	\PfadDicke{.5pt}
	\Pfad(0,0),222222222222\endPfad
	\Pfad(3,0),222222222222\endPfad
	\Pfad(6,6),222222\endPfad
	\Pfad(9,9),222\endPfad
	\Pfad(12,9),222\endPfad
	\Pfad(0,0),111\endPfad
	\Pfad(0,3),111\endPfad
	\Pfad(0,6),111111\endPfad
	\Pfad(0,9),111111111111\endPfad
	\Pfad(0,12),111111111111\endPfad
	\Label\ro{1}(1,10)
	\Label\ro{2}(4,10)
	\Label\ro{3}(7,10)
	\Label\ro{5}(10,10)
	\Label\ro{3}(1,7)
	\Label\ro{4}(4,7)
	\Label\ro{4}(1,4)
	\Label\ro{\textbf 5}(1,1)
	\Label\ro{\leftarrow c}(4.5,1)
	\hbox{\hskip6cm}
	\Einheit.2cm
	\PfadDicke{.5pt}
	\Pfad(0,0),222222222222\endPfad
	\Pfad(3,0),222222222222\endPfad
	\Pfad(6,6),222222\endPfad
	\Pfad(9,6),222222\endPfad
	\Pfad(12,9),222\endPfad
	\Pfad(0,0),111\endPfad
	\Pfad(0,3),111\endPfad
	\Pfad(0,6),111111111\endPfad
	\Pfad(0,9),111111111111\endPfad
	\Pfad(0,12),111111111111\endPfad
	\Label\ro{1}(1,10)
	\Label\ro{2}(4,10)
	\Label\ro{3}(7,10)
	\Label\ro{5}(10,10)
	\Label\ro{3}(1,7)
	\Label\ro{4}(4,7)
	\Label\ro{\textbf 5}(7,7)
	\Label\ro{4}(1,4)
	\Label\ro{5}(1,1)
	\Label\ro{\leftarrow c}(10.5,7)
	\hskip2cm
	$$
	\centerline{\small a. Straight increasing tableau $Y$
		\hskip1.5cm
		b. $(Z,c,0)=(Y\xleftarrow{\,\mathrm{H}}2)$
		\hskip1.5cm
		c. $(Z,c,1)=(Y\xleftarrow{\,\mathrm{H}}3)$}
	\caption{Two examples of Hecke insertion.}
	\label{two hecke}
\end{figure}

For example, let $Y$ be a straight increasing tableau given in Fig.\ref{two hecke}a. Suppose we wish to compute $(Y\xleftarrow{\,\mathrm{H}}2)$. Since the first row of $Y$ contains $2$, $3$ is inserted into the second row and the first row remains unchanged. Since the second row of $Y$ contains $3$, $4$ is inserted into the third row and the second row remains unchanged. Since the value of the rightmost box in the third row is $4$, the algorithm terminates with $\alpha=0$, and $c=(4,1)$ is the box in the fourth row and first column. The result tableau $Z$ is given in Fig.\ref{two hecke}b. 

On the other hand, suppose we wish to compute $(Y\xleftarrow{\,\mathrm{H}}3)$. $5$ is inserted into the second row and the first row remains unchanged. Since $5$ is larger than all entries in the second row of $Y$, $5$ is added as a new box to the end of the second row of $Y$. The algorithm terminates with the tableau $Z$ in Fig.\ref{two hecke}c, $\alpha=1$, and $c=(2,3)$.

Hecke insertion algorithm is reversible. The reverse Hecke insertion algorithm is given in \cite{BKSTY}.
Hecke insertion is a generalization of the standard Robinson-Schensted-Knuth (RSK) algorithm.
While the RSK algorithm associates a permutation with a pair of standard Young tableaux,
the Hecke insertion establishes a correspondence between words and $(P,\,Q)$,
where $P$ is a straight increasing tableau and $Q$ is a set-valued tableau.
We call $P$ the {\it Hecke insertion tableau} and $Q$ the {\it Hecke recording tableau}.
{\it Set-valued tableaux} were introduced by Buch \cite{Buch} in the study of the $K$-theory of Gra{\ss}mannians,
which are $\mathbb{N}$-fillings of a Young diagram assigning to each box a nonempty set of positive integers such that
the largest entry of a box is smaller than the smallest entry in the boxes directly to the right of it, 
and directly below it.
Let $w=w_{1}\cdots w_{n}$ be a word. The corresponding pair $(P(w),Q(w))$ is constructed as follows.

Hecke inserting the letters of $w$ recursively from $w_{1}$ to $w_{n}$, we construct the Hecke insertion tableau:
$$P(w)=(\cdots((\emptyset\xleftarrow{\,\mathrm{H}}w_{1})\xleftarrow{\,\mathrm{H}}w_{2})\cdots\xleftarrow{\,\mathrm{H}}w_{n}).$$

Set $Q(\emptyset)=\emptyset$, $P(w_{1}\cdots w_{k})$ is obtained from $P(w_{1}\cdots w_{k-1})$, while $Q(w_{1}\cdots w_{k})$ is obtained from $Q(w_{1}\cdots w_{k-1})$ by placing the integer $k$ into a special box $c'$ of $Q(w_{1}\cdots w_{k})$. The position of the special box $c'$ is the same as the position of the terminating box $c$ of $P(w_{1}\cdots w_{k})$.
For example, let $w=5433124235$, $P(w)$ and $Q(w)$ are given in Fig.\ref{(P,Q) for word}.

\begin{figure}[h]
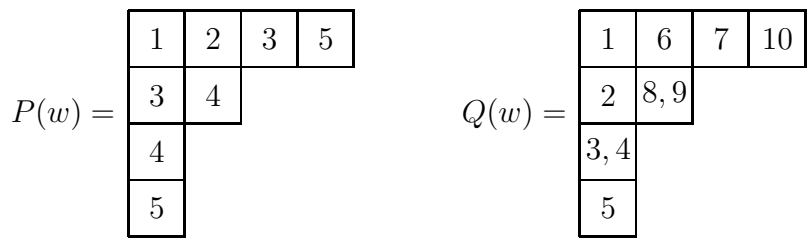

	$$
	\Einheit.25cm
	\PfadDicke{.5pt}
	\Pfad(0,0),222222222222\endPfad
	\Pfad(3,0),222222222222\endPfad
	\Pfad(6,6),222222\endPfad
	\Pfad(9,9),222\endPfad
	\Pfad(12,9),222\endPfad
	\Pfad(0,0),111\endPfad
	\Pfad(0,3),111\endPfad
	\Pfad(0,6),111111\endPfad
	\Pfad(0,9),111111111111\endPfad
	\Pfad(0,12),111111111111\endPfad
	\Label\ro{1}(1,10)
	\Label\ro{2}(4,10)
	\Label\ro{3}(7,10)
	\Label\ro{5}(10,10)
	\Label\ro{3}(1,7)
	\Label\ro{4}(4,7)
	\Label\ro{4}(1,4)
	\Label\ro{5}(1,1)
	\Label\ro{P(w)=}(-4,6)
	\hbox{\hskip6cm}
	\Einheit.25cm
	\PfadDicke{.5pt}
	\Pfad(0,0),222222222222\endPfad
	\Pfad(3,0),222222222222\endPfad
	\Pfad(6,6),222222\endPfad
	\Pfad(9,9),222\endPfad
	\Pfad(12,9),222\endPfad
	\Pfad(0,0),111\endPfad
	\Pfad(0,3),111\endPfad
	\Pfad(0,6),111111\endPfad
	\Pfad(0,9),111111111111\endPfad
	\Pfad(0,12),111111111111\endPfad
	\Label\ro{1}(1,10)
	\Label\ro{6}(4,10)
	\Label\ro{7}(7,10)
	\Label\ro{10}(10,10)
	\Label\ro{2}(1,7)
	\Label\ro{8,9}(4,7)
	\Label\ro{3,4}(1,4)
	\Label\ro{5}(1,1)
	\Label\ro{Q(w)=}(-4,6)
	\hskip2cm
	$$
	\caption{The pair $(P(w),Q(w))$ of an increasing and set-valued tableaux for $w=5433124235$.}
	\label{(P,Q) for word}
\end{figure}

For a word $w$, we denote the length of the longest strictly increasing (resp. decreasing) subsequence of $w$ by $lis(w)$ (resp. $lds(w)$). For a straight increasing tableau $P$, we denote the number of columns (resp. rows) of $P$ by $c(P)$ (resp. $r(P)$).
In \cite{TY11}, Thomas and Yong proved that the insertion tableau of a word formed by Hecke insertion determines the lengths of the longest strictly increasing and strictly decreasing subsequences in the word. That is the following theorem in \cite{TY11}. 

\begin{theorem}
	Let $w$ be a word and let $P(w)$ be the Hecke insertion tableau of $w$. Then $lis(w)=c(P)$ and $lds(w)=r(P)$.
	\label{Th2.1}
\end{theorem}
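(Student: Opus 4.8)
The plan is to deduce the statement from a general principle: the two statistics $lis$ and $lds$ depend only on the $K$-Knuth class of $w$, while for an increasing tableau they can be read off directly from its shape. Concretely, I would prove three facts and chain them together. Let $\rho(T)$ denote the \emph{row reading word} of a straight increasing tableau $T$, obtained by reading the rows of $T$ from left to right, starting with the bottommost row and moving upward. The three ingredients are: (i) $w$ is $K$-Knuth equivalent to $\rho(P(w))$; (ii) $K$-Knuth equivalence preserves both $lis$ and $lds$; and (iii) for every straight increasing tableau $T$ one has $lis(\rho(T))=c(T)$ and $lds(\rho(T))=r(T)$. Granting these, $lis(w)=lis(\rho(P(w)))=c(P(w))=c(P)$, and likewise $lds(w)=r(P)$, which is the theorem.

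Ingredient (iii) is the most concrete, and I would establish it first by a direct ``one entry per column/row'' argument. Since $T$ has straight shape, its rows are weakly decreasing in length, so the first row is strictly increasing of length $c(T)$ and, being a subsequence of $\rho(T)$, witnesses $lis(\rho(T))\ge c(T)$; dually, the first column, read from bottom to top in $\rho(T)$, is strictly decreasing of length $r(T)$, so $lds(\rho(T))\ge r(T)$. For the reverse inequalities, note that two entries lying in a common column of $T$ appear in $\rho(T)$ with the lower (hence larger) one first, so no strictly increasing subsequence can use two cells of the same column; thus $lis(\rho(T))\le c(T)$. Symmetrically, two entries in a common row appear in $\rho(T)$ with the smaller one first, so no strictly decreasing subsequence uses two cells of the same row, giving $lds(\rho(T))\le r(T)$.

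Ingredient (i) is a structural property of Hecke insertion: each single-letter insertion step alters the reading word only through a sequence of elementary $K$-Knuth moves, so the reading word of the accumulated insertion tableau stays in the $K$-Knuth class of the prefix inserted so far. This is exactly the Hecke analogue of the classical fact that the reading word of an RSK insertion tableau is Knuth equivalent to the input, and I would either invoke it from the description of Hecke insertion in \cite{BKSTY,BuchSamuel} or verify it by tracking the bumping route, checking that a box-modifying step, a box-creating step, and a shape-preserving terminating step (the case $\alpha=0$) each realize a composition of generating $K$-Knuth relations on $\rho$.

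The main obstacle is ingredient (ii): checking that every generating $K$-Knuth relation leaves $lis$ and $lds$ invariant. Each relation replaces a short window, $\mathbf{u}\,\sigma\,\mathbf{v}\equiv \mathbf{u}\,\tau\,\mathbf{v}$, so I would fix a longest strictly increasing (resp.\ decreasing) subsequence of one side and produce one of equal length on the other side by modifying only its intersection with the window, leaving the parts inside $\mathbf{u}$ and $\mathbf{v}$ untouched. The delicate cases are the genuinely $K$-theoretic relations---the absorption of a repeated letter ($a\,a\equiv a$) and the relations of type $a\,b\,a\equiv b\,a\,b$---where one must argue that a chain is never forced to use two letters of the window in a way the other side cannot match; since $lis$ and $lds$ see only strict chains, a repeated letter never lengthens a chain, and the three-letter relations permute values whose relative order within a strict chain is constrained enough to transfer the chain across. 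Carrying out this finite case analysis uniformly in the surrounding context $\mathbf{u},\mathbf{v}$ is the technical heart of the argument.
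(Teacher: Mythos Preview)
The paper does not give its own proof of this statement: it is quoted as a result of Thomas and Yong \cite{TY11} and used as a black box. So there is nothing to compare against at the level of argument; your proposal stands as an independent proof, and it is a correct one.

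A couple of remarks on how your three ingredients sit relative to the surrounding material. Ingredient~(i), that $w$ is $K$-Knuth equivalent to the row word of $P(w)$, is exactly the paper's Lemma~\ref{pro3} (quoted from \cite{GMPPRST}), so you may simply cite it rather than re-derive it from the bumping rules. Ingredient~(iii) is fine as written: the pigeonhole ``at most one cell per column (resp.\ row)'' bound matches the witnessing first row (resp.\ first column), and straight shape guarantees there are exactly $c(T)$ nonempty columns and $r(T)$ nonempty rows.

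Ingredient~(ii) is indeed the only place where real checking is needed, and your outline is accurate. The two classical Knuth moves preserve $lis$ and $lds$ by the standard argument; for $x\equiv xx$ the strictness of the chains makes the extra copy useless; and for $xyx\equiv yxy$ (say $x<y$) the increasing value-patterns realizable inside the window are $\emptyset,\{x\},\{y\},\{x<y\}$ on both sides, while the decreasing ones are $\emptyset,\{x\},\{y\},\{y>x\}$ on both sides, and since the windows have equal length the transfer to $\mathbf{u},\mathbf{v}$ is automatic. That finite check completes the argument. Note that this route actually proves a bit more than the theorem asks: it shows $lis$ and $lds$ are $K$-Knuth invariants, whereas the cited result is usually obtained via the $K$-theoretic Greene invariants and $K$-jeu de taquin.
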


\subsection{Hecke growth diagrams for $01$-fillings of rectangular}
$\newline$

The growth diagrams for Hecke insertion were given by Patrias and Pylyavskyy in \cite{PP}. Following \cite{PP}, we describe the Hecke growth diagram of a word.

We represent a word $w=w_{1}w_{2}\cdots$ containing $a_{1}$ copies of 1, $a_{2}$ copies of 2, ..., $a_{m}$ copies of $m$ as a filling of an $m\times (a_{1}+a_{2}+\cdots +a_{m})$ rectangle, by placing an $X$ into the $i$-th column and $w_i$-th row (where we number the rows from bottom to top) for $i=1,2,...,a_{1}+a_{2}+\cdots +a_{m}$. An example is shown in Fig.\ref{fig:word}. 

\begin{figure}[h]
	$$
	\Einheit0.2cm
	\PfadDicke{0.5pt}
	\Pfad(0,9),111111111111111111\endPfad
	\Pfad(0,6),111111111111111111\endPfad
	\Pfad(0,3),111111111111111111\endPfad
	\Pfad(0,0),111111111111111111\endPfad
	\Pfad(18,0),222222222\endPfad
	\Pfad(15,0),222222222\endPfad
	\Pfad(12,0),222222222\endPfad
	\Pfad(9,0),222222222\endPfad
	\Pfad(6,0),222222222\endPfad
	\Pfad(3,0),222222222\endPfad
	\Pfad(0,0),222222222\endPfad
	\Label\ro{\text {\small$X$}}(1,1)
	\Label\ro{\text {\small$X$}}(4,7)
	\Label\ro{\text {\small$X$}}(7,4)
	\Label\ro{\text {\small$X$}}(10,1)
	\Label\ro{\text {\small$X$}}(13,4)
	\Label\ro{\text {\small$X$}}(16,7)
	\hskip4cm
	$$
\caption{The filling corresponds to the word $132123$.}
\label{fig:word}
\end{figure}

For a word $w$, we represent it by a filling of a rectangle.
In this filling, we label each of the four corners of each square with a partition 
and possibly label the horizontal edge of the square with a positive integer $r$,
where $r$ records the row of the corner at which the Hecke insertion terminates.
We start by labeling all corners along the bottom row and left side of the diagram 
with the partition $\emptyset$, as represented in Fig.\ref{fig:4}, and then use the following {\it forward  local rules} 
to label the other corners of the squares and some of the horizontal edges of the squares.

\begin{figure}[h]
	$$
	\begin{tikzpicture}[scale=1]
		\node (00) at (0,0) {$\emptyset$};
		\node (10) at (1,0) {$\emptyset$};
		\node (20) at (2,0) {$\emptyset$};
		\node (30) at (3,0) {$\emptyset$};
		\node (40) at (4,0) {$\emptyset$};
		\node (50) at (5,0) {$\emptyset$};
		\node (01) at (0,1) {$\emptyset$};
		\node (.51.5) at (0.5,0.5) {$X$};
		\node (11) at (1,1) {};
		\node (21) at (2,1) {};
		\node (31) at (3,1) {};
		\node (41) at (4,1) {};
		\node (51) at (5,1) {};
		\node (02) at (0,2) {$\emptyset$};
		\node (12) at (1,2) {};
		\node (22) at (2,2) {};
		\node (32) at (3,2) {};
		\node (1.50.5) at (1.5,2.5) {$X$};
		\node (42) at (4,2) {};
		\node (52) at (5,2) {};
		\node (03) at (0,3) {$\emptyset$};
		\node (13) at (1,3) {};
		\node (23) at (2,3) {};
		\node (33) at (3,3) {};
		\node (43) at (4,3) {};
		\node (53) at (5,3) {};
		\node (2.52.5) at (2.5,1.5) {$X$};
		\node (3.52.5) at (3.5,0.5) {$X$};
		\node (4.50.5) at (4.5,1.5) {$X$};
		\node (5,51.5) at (5.5,2.5) {$X$};
		\node (60) at (6,0) {$\emptyset$};
		\node (61) at (6,1) {};
		\node (62) at (6,2) {};
		\node (63) at (6,3) {};

		\draw (00)--(10)--(20)--(30)--(40)--(50)--(60)
		(01)--(11)--(21)--(31)--(41)--(51)--(61)
		(02)--(12)--(22)--(32)--(42)--(52)--(62)
		(03)--(13)--(23)--(33)--(43)--(53)--(63)
		
		(00)--(01)--(02)--(03)
		(10)--(11)--(12)--(13)
		(20)--(21)--(22)--(23)
		(30)--(31)--(32)--(33)
		(40)--(41)--(42)--(43)
		(50)--(51)--(52)--(53)
		(60)--(61)--(62)--(63);
	\end{tikzpicture}
	$$
	\caption{Initial growth diagram of word $w=132123$.}
	\label{fig:4}
\end{figure}
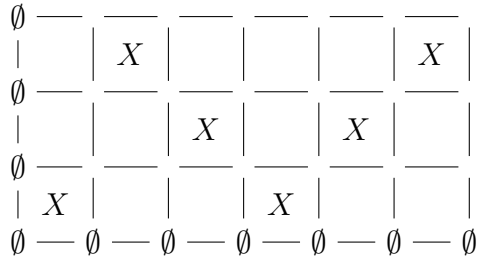

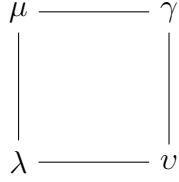
\begin{figure}[h]
	$$
	\begin{tikzpicture}[scale=2]
		\node (00) at (0,0) {$\lambda$};
		\node (10) at (1,0) {$\upsilon$};
		\node (01) at (0,1) {$\mu$};
		\node (11) at (1,1) {$\gamma$};
		
		\draw (00)--(01)
		(00)--(10)
		(11)--(10)
		(11)--(01);
	\end{tikzpicture}
	$$
	\caption{A square of a growth diagram.}
	\label{fig:5}
\end{figure}

In Fig.\ref{fig:5}, suppose we have the labels $\lambda$, $\upsilon$, $\mu$ of a square, we can construct $\gamma$ and possibly the specific positive integer for the horizontal edge of the square as follows:

Case 1. If the square contains an X:
\begin{itemize}
	\item[(F1)]If $\mu_{1}=\upsilon_{1}$, then $\gamma/\mu$ consists of one box in row $1$.
	\item[(F2)]If $\mu_{1}\neq\upsilon_{1}$, then $\gamma=\mu$ and the edge between them is labeled by $1$.
\end{itemize}

Case 2. If the square does not contain an X and either $\mu=\lambda$ or $\upsilon=\lambda$ with no label between $\lambda$ and $\upsilon$:
\begin{itemize}
	\item[(F3)] If $\mu=\lambda$, then $\gamma=\upsilon$. If $\upsilon=\lambda$, then $\gamma=\mu$.
\end{itemize}

Case 3. If the square does not contain an X, does not satisfy the condition of Case 2, and if $\upsilon\nsubseteq\mu$:
\begin{itemize}
	\item [(F4)] For Case 3, $\gamma=\upsilon\cup\mu$.
\end{itemize}

Case 4. If the square does not contain an X, does not satisfy the condition of Case 2, and if $\upsilon\subseteq\mu$:
\begin{itemize}
	\item[(F5)] If $\upsilon/\lambda$ is one box in row $i$ and $\mu/\upsilon$ has no boxes in row $i+1$, then $\gamma/\mu$ is one box in row $i+1$.
	\item[(F6)] If $\upsilon/\lambda$ is one box in row $i$ and $\mu/\upsilon$ has a box in row $i+1$, then $\gamma=\mu$ and the edge between them is labeled $i+1$.
	\item[(F7)] If $\upsilon=\lambda$, the edge between them is labeled $i$, and there are no boxes of $\mu/\upsilon$ immediately to the right or immediately below the terminate box of $\upsilon$ in row $i$, then $\gamma=\mu$ with the edge between them labeled $i$.
	\item[(F8)] If $\upsilon=\lambda$, the edge between them is labeled $i$, and there is a box of $\mu/\upsilon$ directly below the terminate box of $\upsilon$ in row $i$, then $\gamma=\mu$ with the edge between them labeled $i+1$.
	\item[(F9)] If $\upsilon=\lambda$, the edge between them is labeled $i$, and there is a box of $\mu/\upsilon$ immediately to the right of the terminate box of $\upsilon$ in row $i$ but no box of $\mu/\upsilon$ in row $i+1$, then $\gamma/\mu$ is one box in row $i+1$.
	\item[(F10)] If $\upsilon=\lambda$, the edge between them is labeled $i$, and there is a box of $\mu/\upsilon$ immediately to the right of this terminate box of $\upsilon$ in row $i$ and a box of $\mu/\upsilon$ in row $i+1$, then $\gamma=\mu$ with the edge between them labeled $i+1$. 
\end{itemize}

We call the resulting diagram the {\it Hecke growth diagram}. For example, in Fig.\ref{fig:hgd}, we have the Hecke growth diagram of the word $w=132123$ on the left.

\begin{figure}
		$$
	\begin{tikzpicture}[scale=1.2]
		\node (00) at (0,0) {$\emptyset$};
		\node (10) at (1,0) {$\emptyset$};
		\node (20) at (2,0) {$\emptyset$};
		\node (30) at (3,0) {$\emptyset$};
		\node (40) at (4,0) {$\emptyset$};
		\node (50) at (5,0) {$\emptyset$};
		\node (01) at (0,1) {$\emptyset$};
		\node (.51.5) at (0.5,0.5) {$X$};
		\node (11) at (1,1) {$1$};
		\node (21) at (2,1) {$1$};
		\node (31) at (3,1) {$1$};
		\node (41) at (4,1) {$1$};
		\node (51) at (5,1) {$1$};
		\node (02) at (0,2) {$\emptyset$};
		\node (12) at (1,2) {$1$};
		\node (22) at (2,2) {$1$};
		\node (32) at (3,2) {$2$};
		\node (1.50.5) at (1.5,2.5) {$X$};
		\node (42) at (4,2) {$21$};
		\node (52) at (5,2) {$21$};
		\node (03) at (0,3) {$\emptyset$};
		\node (13) at (1,3) {$1$};
		\node (23) at (2,3) {$2$};
		\node (33) at (3,3) {$21$};
		\node (43) at (4,3) {$211$};
		\node (53) at (5,3) {$211$};
		\node (2.52.5) at (2.5,1.5) {$X$};
		\node (3.52.5) at (3.5,0.5) {$X$};
		\node (4.50.5) at (4.5,1.5) {$X$};
		\node (5,51.5) at (5.5,2.5) {$X$};
		\node (60) at (6,0) {$\emptyset$};
		\node (61) at (6,1) {$1$};
		\node (62) at (6,2) {$21$};
		\node (63) at (6,3) {$311$};
		\node at (3.5,1.15) {\tiny $1$};
		\node at (4.5,2.15) {\tiny $1$};
		\node at (4.5,3.15) {\tiny $1$};

		\draw (00)--(10)--(20)--(30)--(40)--(50)--(60)
		(01)--(11)--(21)--(31)--(41)--(51)--(61)
		(02)--(12)--(22)--(32)--(42)--(52)--(62)
		(03)--(13)--(23)--(33)--(43)--(53)--(63)
		
		(00)--(01)--(02)--(03)
		(10)--(11)--(12)--(13)
		(20)--(21)--(22)--(23)
		(30)--(31)--(32)--(33)
		(40)--(41)--(42)--(43)
		(50)--(51)--(52)--(53)
		(60)--(61)--(62)--(63);

	\end{tikzpicture}
	\hbox{\hskip1.5cm}
	\begin{tikzpicture}[scale=1.2]
		\node (00) at (0,0) {$\emptyset$};
		\node (10) at (1,0) {$\emptyset$};
		\node (20) at (2,0) {$\emptyset$};
		\node (30) at (3,0) {$\emptyset$};
		\node (40) at (4,0) {$\emptyset$};
		\node (50) at (5,0) {$\emptyset$};
		\node (01) at (0,1) {$\emptyset$};
		\node (11) at (1,1) {$1$};
		\node (21) at (2,1) {$1$};
		\node (31) at (3,1) {$1$};
		\node (41) at (4,1) {$1$};
		\node (51) at (5,1) {$1$};
		\node (02) at (0,2) {$\emptyset$};
		\node (12) at (1,2) {$1$};
		\node (22) at (2,2) {$1$};
		\node (32) at (3,2) {$2$};
		\node (42) at (4,2) {$21$};
		\node (52) at (5,2) {$21$};
		\node (03) at (0,3) {$\emptyset$};
		\node (13) at (1,3) {$1$};
		\node (23) at (2,3) {$2$};
		\node (33) at (3,3) {$21$};
		\node (43) at (4,3) {$211$};
		\node (53) at (5,3) {$211$};
		\node (60) at (6,0) {$\emptyset$};
		\node (61) at (6,1) {$1$};
		\node (62) at (6,2) {$21$};
		\node (63) at (6,3) {$311$};
		\node at (3.5,1.15) {\tiny $1$};
		\node at (4.5,2.15) {\tiny $1$};
		\node at (4.5,3.15) {\tiny $1$};
		\node at (0.5,0.5) {$F1$};
		\node at (0.5,1.5) {$F3$};
		\node at (0.5,2.5) {$F3$};
		\node at (1.5,0.5) {$F3$};
		\node at (1.5,1.5) {$F3$};
		\node at (1.5,2.5) {$F1$};
		\node at (2.5,0.5) {$F3$};
		\node at (2.5,1.5) {$F1$};
		\node at (2.5,2.5) {$F5$};
		\node at (3.5,0.5) {$F2$};
		\node at (3.5,1.5) {$F9$};
		\node at (3.5,2.5) {$F5$};
		\node at (4.5,0.5) {$F3$};
		\node at (4.5,1.5) {$F2$};
		\node at (4.5,2.5) {$F7$};
		\node at (5.5,0.5) {$F3$};
		\node at (5.5,1.5) {$F3$};
		\node at (5.5,2.5) {$F1$};

		\draw (00)--(10)--(20)--(30)--(40)--(50)--(60)
		(01)--(11)--(21)--(31)--(41)--(51)--(61)
		(02)--(12)--(22)--(32)--(42)--(52)--(62)
		(03)--(13)--(23)--(33)--(43)--(53)--(63)
		
		(00)--(01)--(02)--(03)
		(10)--(11)--(12)--(13)
		(20)--(21)--(22)--(23)
		(30)--(31)--(32)--(33)
		(40)--(41)--(42)--(43)
		(50)--(51)--(52)--(53)
		(60)--(61)--(62)--(63);

	\end{tikzpicture}
	$$
	\caption{The Hecke growth diagram of the word 132123 on the left and the local rules applied at each step to build it on the right.}
	\label{fig:hgd}
\end{figure}
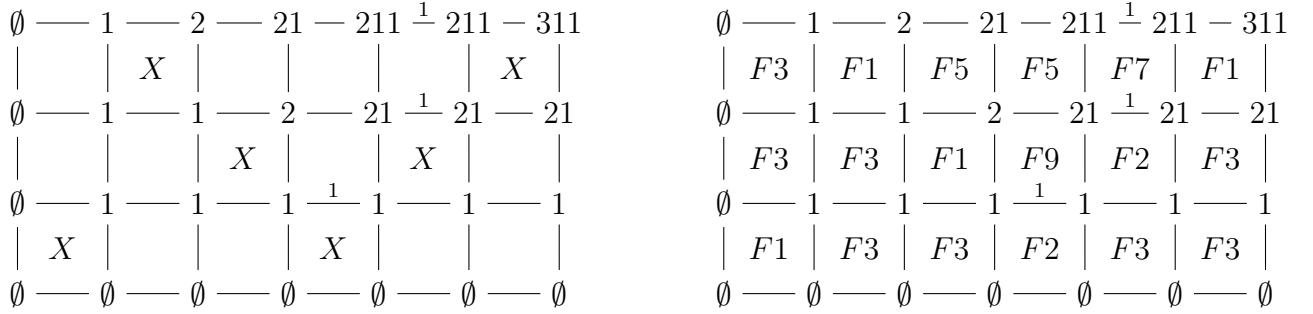

Let $\emptyset=\mu_{0}\subseteq\mu_{1}\subseteq...\subseteq\mu_{n}$ be the sequence of partitions along the top border of the Hecke growth diagram of a word $w$, and let $\emptyset=\upsilon_{0}\subseteq\upsilon_{1}\subseteq...\subseteq\upsilon_{m}$ be the sequence of partitions along the right border of the Hecke growth diagram. The sequence along the right border corresponds to a straight increasing tableau $P(w)$, and the sequence along the top border corresponds to a set-valued tableau $Q(w)$. For $P(w)$, we put $i$'s into the squares of $\upsilon_{i}/\upsilon_{i-1}$. For $Q(w)$, if $\mu_{i-1}\neq\mu_{i}$, then we put $i$ into the square of $\mu_{i}/\mu_{i-1}$. Otherwise, $\mu_{i-1}=\mu_{i}$ and the edge between them is label $r$, then we put $i$ into the box at the end of row $r$ of $\mu_{i-1}$. For example, from the Hecke growth diagram on the left in Fig.\ref{fig:hgd}, we obtain $P(132123)$ and $Q(132123)$ as shown in Fig.\ref{fig:7}.

\begin{figure}
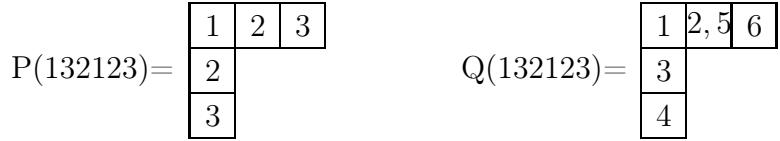

	$$
	\Einheit.2cm
	\PfadDicke{.5pt}
	\Pfad(0,0),222222222\endPfad
	\Pfad(3,0),222222222\endPfad
	\Pfad(6,6),222\endPfad
	\Pfad(9,6),222\endPfad
	\Pfad(0,0),111\endPfad
	\Pfad(0,3),111\endPfad
	\Pfad(0,6),111111111\endPfad
	\Pfad(0,9),111111111\endPfad
	\Label\ro{1}(1,7)
	\Label\ro{2}(1,4)
	\Label\ro{2}(4,7)
	\Label\ro{3}(1,1)
	\Label\ro{3}(7,7)
	\Label\ro{$P(132123)=$}(-7,4)
	\hbox{\hskip6cm}
	\Einheit.2cm
	\PfadDicke{.5pt}
	\Pfad(0,0),222222222\endPfad
	\Pfad(3,0),222222222\endPfad
	\Pfad(6,6),222\endPfad
	\Pfad(9,6),222\endPfad
	\Pfad(0,0),111\endPfad
	\Pfad(0,3),111\endPfad
	\Pfad(0,6),111111111\endPfad
	\Pfad(0,9),111111111\endPfad
	\Label\ro{1}(1,7)
	\Label\ro{3}(1,4)
	\Label\ro{2,5}(4,7)
	\Label\ro{4}(1,1)
	\Label\ro{6}(7,7)
	\Label\ro{$Q(132123)=$}(-7,4)
	\hskip1.8cm
	$$
	\caption{$P(132123)$ and $Q(132123)$ obtained from the Hecke growth diagram in Fig.\ref{fig:hgd}.}
	\label{fig:7}
\end{figure}

It is well known that there is a correspondence between a permutation $\pi$ and a pair of standard Young tableaux $(P, Q)$ via the RSK algorithm. This correspondence can also be obtained using Fomin's growth diagram \cite{Fomin86,Fomin94,Fomin95}. There is an analogous result for the Hecke growth diagram of words, as shown in the following theorem.

\begin{theorem}
	\cite[Theorem 4.16]{PP}
	For any word $w$, the increasing tableau $P(w)$ and set-valued tableau $Q(w)$ obtained from the sequence of partitions along the right border of the Hecke growth diagram for $w$
	and along the upper border of the Hecke growth diagram for $w$, respectively, are the Hecke insertion tableau and the Hecke recording tableau for $w$.
	\label{PQ-w}
\end{theorem}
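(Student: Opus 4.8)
The plan is to prove the statement by induction on the length $n$ of the word $w=w_{1}\cdots w_{n}$, showing that growing the Hecke growth diagram one column at a time reproduces, column by column, the successive Hecke insertions that build $P(w)$ and $Q(w)$. The base case $n=0$ is immediate, since the empty diagram yields the empty tableaux. For the inductive step, write $w'=w_{1}\cdots w_{n-1}$ and assume that the right border of the Hecke growth diagram of $w'$ produces $P(w')$ and that its top border produces $Q(w')$. Adding the final letter $w_{n}$ amounts to appending one new column --- the column with an $X$ in row $w_{n}$ --- and filling in its corners and edge labels by the forward rules F1--F10, reading the column from bottom to top.

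The heart of the argument is a single-column lemma: processing this new column via the local rules is equivalent to performing the Hecke insertion $(P(w')\xleftarrow{\,\mathrm{H}}w_{n})=(Z,c,\alpha)$. To make this precise I would interpret the corner partition at position $(i,j)$ as the shape of the Hecke insertion tableau of the subword of $w_{1}\cdots w_{i}$ consisting of those letters that are at most $j$; under this reading the left border of the new column carries the chain of shapes attached to $P(w')$, and the claim is that the right border carries the corresponding chain for $P(w')\xleftarrow{\,\mathrm{H}}w_{n}$. I would verify this by an inner induction up the column, matching each local rule to the state of the insertion at the corresponding row: below row $w_{n}$ nothing happens and rule F3 simply copies the shape; at row $w_{n}$ the $X$ triggers F1 or F2, which is exactly the first bumping step into row $1$; and above row $w_{n}$ the propagation rules F3--F10 reproduce the subsequent bumping path, with F4 carrying a displaced box, F5/F6 deciding whether a bumped box opens a new cell in the next row or terminates there, and F7--F10 propagating a termination upward. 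Throughout, the value of $\alpha$ is matched to the outcome at the top of the column: $\alpha=1$ corresponds to the top corner $\mu_{n}$ strictly containing $\mu_{n-1}$ (a genuine new cell, rules F1/F5/F9), while $\alpha=0$ corresponds to $\mu_{n}=\mu_{n-1}$ with an edge label recording the termination row (rules F2/F6/F7/F8/F10).

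Once the single-column lemma is in place, both halves of the theorem follow. The right-border chain for $w$ is the chain for $w'$ updated by the insertion path of $w_{n}$, so filling $\upsilon_{j}/\upsilon_{j-1}$ with $j$ recovers precisely $P(w')\xleftarrow{\,\mathrm{H}}w_{n}=P(w)$; here the compatibility with Theorem~\ref{Th2.1} guarantees that the shape data suffice to reconstruct the increasing tableau. For the recording tableau, the lemma identifies the position at which the top-border chain records the $n$-th step --- either the new cell $\mu_{n}/\mu_{n-1}$ or the end of the edge-labeled row --- with the terminate box $c$ of the insertion, so placing $n$ there is exactly the rule defining $Q(w)$ from $Q(w')$. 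Hence the right and top borders give the Hecke insertion and recording tableaux, completing the induction.

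I expect the main obstacle to be the case analysis underlying the single-column lemma, and specifically the propagation of the edge labels in rules F7--F10. Unlike the classical RSK growth diagram, Hecke insertion can terminate without enlarging the shape --- when the inserted letter is weakly larger than every entry of a row, or when placing a new box would destroy strict increase --- and this \emph{latent} termination must be carried upward and reconciled at each higher row with the existing shape, deciding at each step whether the interaction forces a genuine new cell (F9) or keeps the termination latent (F7). Checking that this bookkeeping matches the Hecke insertion rules verbatim, including the set-valued behavior of $Q$, is where the real work lies; the remaining cases (F1--F6) parallel the classical argument and are comparatively routine.
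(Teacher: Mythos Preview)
The paper does not give its own proof of this theorem: it is quoted verbatim as \cite[Theorem 4.16]{PP} in the preliminaries section and used as a black box. So there is nothing in the present paper to compare your proposal against.

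That said, your outline is correct and is essentially the argument Patrias and Pylyavskyy themselves give in \cite{PP}: induction on the length of $w$, with the inductive step being a single-column lemma that matches the forward rules F1--F10, read bottom-to-top in the new column, against the row-by-row progress of the Hecke insertion $P(w')\xleftarrow{\,\mathrm{H}}w_n$. Your identification of the delicate part --- the edge-label propagation in F7--F10 tracking a termination that has not (yet) enlarged the shape --- is accurate; this is exactly where the Hecke case diverges from the classical RSK growth-diagram argument. One small correction: your appeal to Theorem~\ref{Th2.1} to justify that ``the shape data suffice to reconstruct the increasing tableau'' is misplaced. Theorem~\ref{Th2.1} concerns only the first row and column lengths; what you actually need is the elementary observation that the chain $\emptyset=\upsilon_0\subseteq\upsilon_1\subseteq\cdots\subseteq\upsilon_m$ along the right border determines $P(w)$ by placing $i$ in the boxes of $\upsilon_i/\upsilon_{i-1}$, which is simply the definition stated in the paper just before the theorem.
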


\subsection{Jeu de taquin for increasing tableaux}
$\newline$

In this section, we describe the jeu de taquin for increasing tableaux which was introduced by Thomas and Yong in \cite{TY09}.

For $T\in$ INC($\rho/\lambda$), an {\it inner corner} is a maximally southeast box of $\lambda$. Let $C$ be a set of some inner corners of $\lambda$, and mark these boxes in $C$ with $\bullet$. The {\it jeu de taquin} for $T$ and $C$ is described as follows. We begin with the tableau $T_{0}$ which is composed of $T\cup C$. If $1$ is directly below or to the right of $\bullet$, then we swap $\bullet$ and $1$ in $T_{0}$; otherwise, we leave $T_{0}$ unchanged. We denote the resulting tableau of $T_{0}$ by $T_{1}$. Next we consider the tableau $T_{1}$. If $2$ is directly below or to the right of $\bullet$, then we swap $\bullet$ and $2$ in $T_{1}$; otherwise, we leave $T_{1}$ unchanged. We denote the resulting tableau of $T_{1}$ by $T_{2}$. We repeat the above processes until the $\bullet$ reach the inner corners of $\rho$. We denote the final tableau as $jdt_{C}(T)$. It is easy to see that $jdt_{C}(T)$ is also an increasing tableau.
Let $\rho=(3,3,2)$ and $\lambda=(2,1)$. Let $T\in$ INC($\rho/\lambda$) be given as in Fig.\ref{F jdt}, and $C$ indicated with $\bullet$. See Fig.\ref{F jdt}e for $jdt_{C}(T)$.

\begin{figure}
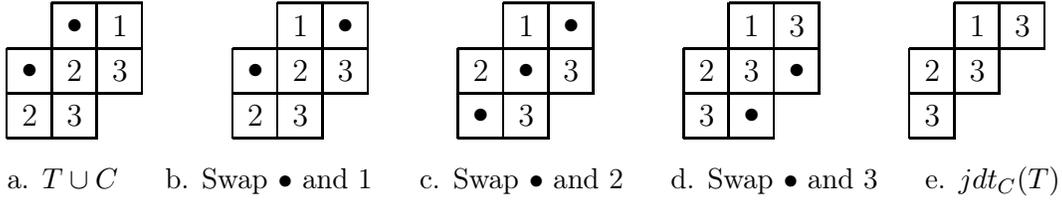

	$$
	\Einheit.2cm
	\PfadDicke{.5pt}
	\Pfad(0,0),222222\endPfad
	\Pfad(3,0),222222222\endPfad
	\Pfad(6,0),222222222\endPfad
	\Pfad(9,3),222222\endPfad
	\Pfad(0,0),111111\endPfad
	\Pfad(0,3),111111111\endPfad
	\Pfad(0,6),111111111\endPfad
	\Pfad(3,9),111111\endPfad
	\Label\ro{2}(1,1)
	\Label\ro{3}(4,1)
	\Label\ro{2}(4,4)
	\Label\ro{3}(7,4)
	\Label\ro{1}(7,7)
	\Label\ro{\bullet}(1,4)
	\Label\ro{\bullet}(4,7)
	\hbox{\hskip3cm}
	\Einheit.2cm
	\PfadDicke{.5pt}
	\Pfad(0,0),222222\endPfad
	\Pfad(3,0),222222222\endPfad
	\Pfad(6,0),222222222\endPfad
	\Pfad(9,3),222222\endPfad
	\Pfad(0,0),111111\endPfad
	\Pfad(0,3),111111111\endPfad
	\Pfad(0,6),111111111\endPfad
	\Pfad(3,9),111111\endPfad
	\Label\ro{2}(1,1)
	\Label\ro{3}(4,1)
	\Label\ro{2}(4,4)
	\Label\ro{3}(7,4)
	\Label\ro{\bullet}(7,7)
	\Label\ro{\bullet}(1,4)
	\Label\ro{1}(4,7)
	\hbox{\hskip3cm}
	\Einheit.2cm
	\PfadDicke{.5pt}
	\Pfad(0,0),222222\endPfad
	\Pfad(3,0),222222222\endPfad
	\Pfad(6,0),222222222\endPfad
	\Pfad(9,3),222222\endPfad
	\Pfad(0,0),111111\endPfad
	\Pfad(0,3),111111111\endPfad
	\Pfad(0,6),111111111\endPfad
	\Pfad(3,9),111111\endPfad
	\Label\ro{\bullet}(1,1)
	\Label\ro{3}(4,1)
	\Label\ro{\bullet}(4,4)
	\Label\ro{3}(7,4)
	\Label\ro{\bullet}(7,7)
	\Label\ro{2}(1,4)
	\Label\ro{1}(4,7)
	\hbox{\hskip3cm}
	\Einheit.2cm
	\PfadDicke{.5pt}
	\Pfad(0,0),222222\endPfad
	\Pfad(3,0),222222222\endPfad
	\Pfad(6,0),222222222\endPfad
	\Pfad(9,3),222222\endPfad
	\Pfad(0,0),111111\endPfad
	\Pfad(0,3),111111111\endPfad
	\Pfad(0,6),111111111\endPfad
	\Pfad(3,9),111111\endPfad
	\Label\ro{3}(1,1)
	\Label\ro{\bullet}(4,1)
	\Label\ro{3}(4,4)
	\Label\ro{\bullet}(7,4)
	\Label\ro{3}(7,7)
	\Label\ro{2}(1,4)
	\Label\ro{1}(4,7)
	\hbox{\hskip3cm}
	\Einheit.2cm
	\PfadDicke{.5pt}
	\Pfad(0,0),222222\endPfad
	\Pfad(3,0),222222222\endPfad
	\Pfad(6,3),222222\endPfad
	\Pfad(9,6),222\endPfad
	\Pfad(0,0),111\endPfad
	\Pfad(0,3),111111\endPfad
	\Pfad(0,6),111111111\endPfad
	\Pfad(3,9),111111\endPfad
	\Label\ro{3}(1,1)
	\Label\ro{3}(4,4)
	\Label\ro{3}(7,7)
	\Label\ro{2}(1,4)
	\Label\ro{1}(4,7)
	\hskip2cm
	$$
	\centerline{\small a. $T\cup C$
		\hskip0.5cm
		b. Swap $\bullet$ and 1
		\hskip0.5cm
		c. Swap $\bullet$ and 2
		\hskip0.5cm
		d. Swap $\bullet$ and 3
		\hskip0.5cm
		e. $jdt_{C}(T)$}
	\caption{An example of jeu de taquin for increasing tableaux.}
	\label{F jdt}
\end{figure}

The {\it reverse jeu de taquin} for $T'\in$ INC($\rho/\lambda$) is performed in a similar manner. For $T'\in$ INC($\rho/\lambda$), an {\it outer corner} is a box that shares an edge with the southeast side of $\rho/\lambda$ and can be added such that the resulting shape is still a skew shape. Let $C'$ be a set of some outer corners, and mark these boxes in $C'$ with $\bullet$. We begin with the tableau $T'_{n}$ which is made of $T'\cup C'$. If $n$ (the maximum entry of $T'$) is directly above or to the left of $\bullet$, then we swap $\bullet$ and $n$ in $T'_{n}$; otherwise, we leave $T'_{n}$ unchanged. We denoted the resulting tableau of $T'_{n}$ by $T'_{n-1}$. Next, we consider the tableau $T'_{n-1}$. If $n-1$ is directly above or to the left of $\bullet$, then we swap $\bullet$ and $n-1$ in $T'_{n-1}$; otherwise, we leave $T'_{n-1}$ unchanged. We denote the resulting tableau of $T'_{n-1}$ by $T'_{n-2}$. We repeat the above processes until the $\bullet$ reach the inner corners of $\lambda$. We denote the final tableau by $rjdt_{C'}(T')$. See Fig.\ref{R jdt} for an example.

\begin{figure}
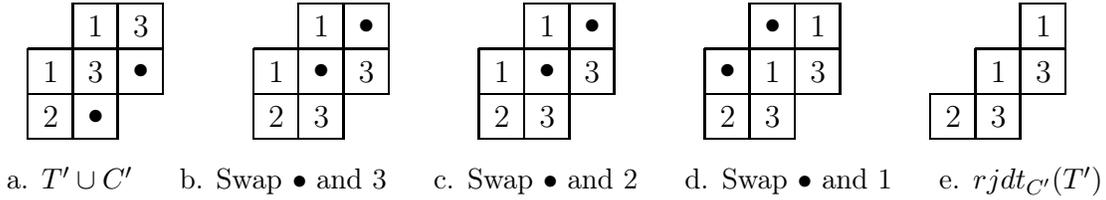

	$$
	\Einheit.2cm
	\PfadDicke{.5pt}
	\Pfad(0,0),222222\endPfad
	\Pfad(3,0),222222222\endPfad
	\Pfad(6,0),222222222\endPfad
	\Pfad(9,3),222222\endPfad
	\Pfad(0,0),111111\endPfad
	\Pfad(0,3),111111111\endPfad
	\Pfad(0,6),111111111\endPfad
	\Pfad(3,9),111111\endPfad
	\Label\ro{2}(1,1)
	\Label\ro{\bullet}(4,1)
	\Label\ro{3}(4,4)
	\Label\ro{\bullet}(7,4)
	\Label\ro{3}(7,7)
	\Label\ro{1}(1,4)
	\Label\ro{1}(4,7)
	\hbox{\hskip3cm}
	\Einheit.2cm
	\PfadDicke{.5pt}
	\Pfad(0,0),222222\endPfad
	\Pfad(3,0),222222222\endPfad
	\Pfad(6,0),222222222\endPfad
	\Pfad(9,3),222222\endPfad
	\Pfad(0,0),111111\endPfad
	\Pfad(0,3),111111111\endPfad
	\Pfad(0,6),111111111\endPfad
	\Pfad(3,9),111111\endPfad
	\Label\ro{2}(1,1)
	\Label\ro{3}(4,1)
	\Label\ro{\bullet}(4,4)
	\Label\ro{3}(7,4)
	\Label\ro{\bullet}(7,7)
	\Label\ro{1}(1,4)
	\Label\ro{1}(4,7)
	\hbox{\hskip3cm}
	\Einheit.2cm
	\PfadDicke{.5pt}
	\Pfad(0,0),222222\endPfad
	\Pfad(3,0),222222222\endPfad
	\Pfad(6,0),222222222\endPfad
	\Pfad(9,3),222222\endPfad
	\Pfad(0,0),111111\endPfad
	\Pfad(0,3),111111111\endPfad
	\Pfad(0,6),111111111\endPfad
	\Pfad(3,9),111111\endPfad
	\Label\ro{2}(1,1)
	\Label\ro{3}(4,1)
	\Label\ro{\bullet}(4,4)
	\Label\ro{3}(7,4)
	\Label\ro{\bullet}(7,7)
	\Label\ro{1}(1,4)
	\Label\ro{1}(4,7)
	\hbox{\hskip3cm}
	\Einheit.2cm
	\PfadDicke{.5pt}
	\Pfad(0,0),222222\endPfad
	\Pfad(3,0),222222222\endPfad
	\Pfad(6,0),222222222\endPfad
	\Pfad(9,3),222222\endPfad
	\Pfad(0,0),111111\endPfad
	\Pfad(0,3),111111111\endPfad
	\Pfad(0,6),111111111\endPfad
	\Pfad(3,9),111111\endPfad
	\Label\ro{2}(1,1)
	\Label\ro{3}(4,1)
	\Label\ro{1}(4,4)
	\Label\ro{3}(7,4)
	\Label\ro{1}(7,7)
	\Label\ro{\bullet}(1,4)
	\Label\ro{\bullet}(4,7)
	\hbox{\hskip3cm}
	\Einheit.2cm
	\PfadDicke{.5pt}
	\Pfad(0,0),222\endPfad
	\Pfad(3,0),222222\endPfad
	\Pfad(6,0),222222222\endPfad
	\Pfad(9,3),222222\endPfad
	\Pfad(0,0),111111\endPfad
	\Pfad(0,3),111111111\endPfad
	\Pfad(3,6),111111\endPfad
	\Pfad(6,9),111\endPfad
	\Label\ro{2}(1,1)
	\Label\ro{3}(4,1)
	\Label\ro{1}(4,4)
	\Label\ro{3}(7,4)
	\Label\ro{1}(7,7)
	\hskip2cm
	$$
	\centerline{\small a. $T'\cup C'$
		\hskip0.5cm
		b. Swap $\bullet$ and 3
		\hskip0.5cm
		c. Swap $\bullet$ and 2
		\hskip0.5cm
		d. Swap $\bullet$ and 1
		\hskip0.5cm
		e. $rjdt_{C'}(T')$}
	\caption{An example of reverse jeu de taquin for increasing tableaux.}
	\label{R jdt}
\end{figure}

Two increasing skew tableaux $T$ and $T'$ are {\it K-jeu de taquin equivalent}\, if $T$ can be obtained by applying a sequence of jeu de taquin or reverse jeu de taquin operations to $T'$ \cite{BuchSamuel}.

\subsection{$K$-Knuth equivalence on words and increasing tableaux}
$\newline$

Two permutations are {\it Knuth equivalent} if one can be obtained from the other via a finite series of applications of the Knuth relations: (1) for $x<y<z$, $xzy\sim zxy$; (2) for $x<y<z$, $yxz\sim yzx$. Two permutations are Knuth equivalent if and only if they have the same insertion tableau by the RSK algorithm. 

Knuth equivalence for permutations has been extended to the context of words, as defined by Buch and Samuel in \cite{BuchSamuel}. Two words are said to be {\it K-Knuth equivalent} if one can be transformed into the other through a finite series of applications of the following $K$-Knuth relations:

\begin{align*}
	xzy &\equiv zxy,\qquad (x < y < z) \\
	yxz &\equiv yzx,\qquad (x < y < z) \\
	x &\equiv xx, \\
	xyx &\equiv yxy.
\end{align*}

Let $T$ be an increasing tableau, and we denote row($T$) as the reading word of $T$, obtained by reading the entries of $T$ from left to right along each row, starting from the bottom row to the top. For example, the reading word of $P(w)$ in Fig.\ref{(P,Q) for word} is $54341235$. For two increasing tableaux $T$ and $T'$, if row($T$) $\equiv$ row($T'$), we say $T$ and $T'$ are {\it K-Knuth equivalent}, denoted by $T\equiv T'$. Here are some results that we will be used in Section 4.

\begin{lemma}\cite[Lemma 5.5]{BuchSamuel}
	Let $[a,b]$ be an integer interval. Let $w_1$ and $w_2$ be $K$-Knuth equivalent words. 
	For $i=1,2$, let $w_i|_{[a,b]}$ be the word obtained from $w_i$ by deleting all integers not 
	contained in the interval $[a,b]$. Then $w_1|_{[a,b]}$ and $w_2|_{[a,b]}$ are
	$K$-Knuth equivalent words.
	\label{pro1}
\end{lemma}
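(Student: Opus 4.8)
The plan is to reduce the statement to a single $K$-Knuth move and then verify it relation by relation. By definition, $K$-Knuth equivalence is the equivalence relation generated by applying the four elementary relations to factors of a word, and equivalence is transitive; hence it suffices to treat the case in which $w_2$ is obtained from $w_1$ by one application of one relation to one factor. Writing $w_1 = u\,s\,v$ and $w_2 = u\,t\,v$ with $s \equiv t$ an elementary relation, I use that restriction distributes over concatenation: $w_i|_{[a,b]} = (u|_{[a,b]})\,(s_i|_{[a,b]})\,(v|_{[a,b]})$ with $s_1=s$ and $s_2=t$. Since the outer factors agree and $K$-Knuth equivalence is a congruence (a move on a factor of $s|_{[a,b]}$ is also a move on a factor of the full word), it is enough to prove $s|_{[a,b]} \equiv t|_{[a,b]}$ for each of the four relations.

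For each relation I would run a short case analysis on which of the letters of $s$ lie in $[a,b]$. The crucial point for the two Knuth-type relations $xzy \equiv zxy$ and $yxz \equiv yzx$ (with $x<y<z$) is an interval argument: since $y$ lies strictly between $x$ and $z$ and $[a,b]$ is an interval, whenever both extreme values $x,z$ survive, the middle value $y$ survives as well. Thus the only case in which all three letters survive reproduces the same relation verbatim, while in every remaining case at least one of the two transposed letters $x,z$ is deleted, and deleting either of them leaves the surviving letters in the same relative order in both words, so the restricted factors become literally equal. The relation $x \equiv xx$ is immediate: either $x\in[a,b]$, and the restricted factors are again $x$ and $xx$, or $x\notin[a,b]$ and both restrict to the empty word.

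The one relation needing slightly more care is $xyx \equiv yxy$. When both $x,y\in[a,b]$ the relation survives unchanged; when exactly one of them lies in $[a,b]$, deletion of the other collapses the two factors to $xx$ versus $x$ (respectively $yy$ versus $y$), which are $K$-Knuth equivalent precisely by the relation $x \equiv xx$; and when neither survives both factors are empty. This is the only place where one elementary relation must be invoked to absorb the residue of another.

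I expect the main obstacle to be bookkeeping rather than conceptual: one must check carefully that the reduction to a single factor move is legitimate—that restriction commutes with concatenation and that a relation applied to a factor of $w_i$ restricts to a relation applied to a factor of $w_i|_{[a,b]}$—and then be exhaustive in the case analysis so that no ``inside/outside $[a,b]$'' pattern for the two or three relevant letters is overlooked. The interval property of $[a,b]$ is exactly what rules out the one dangerous possibility, namely an isolated transposition of two surviving distinct values, which is not itself a $K$-Knuth move; with that case excluded, every elementary move restricts either to the same move, to the move $x\equiv xx$, or to an equality.
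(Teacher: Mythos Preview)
Your argument is correct. The reduction to a single elementary move is legitimate because restriction to $[a,b]$ is a letter-by-letter operation and hence commutes with concatenation, and your case analysis covers all possibilities. The key observation you identify---that for the two Knuth-type relations the interval property forces $y$ to survive whenever both $x$ and $z$ do, so that the only nontrivial residual case arises from $xyx\equiv yxy$ and is absorbed by $x\equiv xx$---is exactly the point of the proof.

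There is nothing to compare against in this paper: the lemma is quoted from Buch--Samuel \cite{BuchSamuel} and used as a black box, with no proof supplied here. Your write-up is the standard direct verification and is essentially how the result is established in the cited source.
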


\begin{theorem}\cite[Theorem 6.2]{BuchSamuel}
	Let $T$ and $T^\prime$ be increasing tableaux. Then $T$ and $T^\prime$ are $K$-Knuth equivalent
	if and only if $T$ and $T^\prime$ are $K$-jeu de taquin equivalent.
	\label{pro2}
\end{theorem}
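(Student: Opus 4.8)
The statement is the Buch--Samuel equivalence of two relations on increasing tableaux, and the natural route is to prove each implication by reducing it to how the two kinds of moves act on reading words. The plan is to use $\mathrm{row}(T)$ as the bridge between the two equivalences, and to establish separately that (i) every elementary jeu de taquin slide moves $\mathrm{row}(T)$ within a single $K$-Knuth class, and (ii) every application of a $K$-Knuth relation to a word can be realized by a sequence of slides on a tableau whose reading word is that word.

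For direction (i) [$K$-jdt $\Rightarrow$ $K$-Knuth], I would track the reading word as the marked box $\bullet$ travels through one inner box during a single forward slide, and symmetrically for a reverse slide. Each individual swap of $\bullet$ with an adjacent entry $k$ alters $\mathrm{row}(T)$ only locally, and one checks by a finite case analysis on the three or four entries surrounding $\bullet$ that the net effect of the complete slide is a composition of the four $K$-Knuth relations. The two genuinely $K$-theoretic relations $x \equiv xx$ and $xyx \equiv yxy$ are exactly what is needed to absorb the configurations in which a slide duplicates an entry or leaves a box occupied by an equal value, while the ordinary relations $xzy \equiv zxy$ and $yxz \equiv yzx$ cover the remaining cases. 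Lemma~\ref{pro1} is convenient here, since it lets one verify the relations after restricting to a small interval of consecutive values, keeping each local check manageable.

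For direction (ii) [$K$-Knuth $\Rightarrow$ $K$-jdt], the idea is to represent an arbitrary word $w=w_1\cdots w_n$ as an increasing skew tableau laid out along an antidiagonal strip with one box per row and per column, so that the increasing conditions are vacuous and the reading word is literally $w$. One then shows that each of the four defining relations, applied to adjacent letters, can be implemented by sliding those letters into a small window and performing an explicit short sequence of forward and reverse slides that effects precisely that relation while fixing the rest of the tableau. In particular the idempotent relation $x \equiv xx$ is matched to a slide in which two equal entries merge, which is exactly the phenomenon distinguishing $K$-jeu de taquin from classical slides. Concatenating these local simulations connects the tableaux of any two $K$-Knuth equivalent words by slides, which is the definition of $K$-jeu de taquin equivalence.

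The main obstacle is the reverse direction (ii). In the classical setting the analogous implication is immediate from confluence of jeu de taquin, since two skew tableaux with Knuth-equivalent reading words have the \emph{same} rectification; but $K$-jeu de taquin is \emph{not} confluent, as rectifying an increasing tableau by different sequences of slides can yield genuinely different straight tableaux. Consequently one cannot argue through a unique rectification target and must instead produce and verify explicit slide sequences realizing each $K$-Knuth relation, ensuring that every intermediate filling remains a valid increasing tableau and that the simulation stays localized so that letters outside the active window are undisturbed. Controlling this locality, and organizing the case analysis so that the idempotent and braid-like $K$-theoretic relations are matched to the correct slide patterns, is where the real work lies.
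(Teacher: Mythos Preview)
The paper does not prove this theorem at all: it is stated in Section~2.4 as a preliminary result imported directly from Buch and Samuel \cite[Theorem~6.2]{BuchSamuel}, with no argument given. There is therefore no proof in the paper to compare your proposal against.

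As for the proposal itself, your two-step plan is the standard architecture for this result and is essentially the route taken in \cite{BuchSamuel}. Direction~(i) is correct in spirit: one shows that a single jeu de taquin slide changes the reading word only by a sequence of $K$-Knuth relations, and the case analysis on the neighborhood of the moving $\bullet$ is exactly where the relations $x\equiv xx$ and $xyx\equiv yxy$ enter. Direction~(ii) is also the right idea---lay the word out on an antidiagonal strip and realize each elementary $K$-Knuth relation by an explicit short slide sequence---and you correctly flag the non-confluence of $K$-jeu de taquin as the reason one cannot shortcut through a common rectification. The one place your sketch is thin is in the claim that these local simulations ``stay localized'': in practice one must embed the active letters in a slightly larger window and check that the slides do not disturb adjacent entries, and this verification (especially for $xyx\equiv yxy$) is where most of the actual work in \cite{BuchSamuel} sits. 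But as a proof outline your proposal is sound; it simply has no counterpart in the present paper.
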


\begin{lemma}\cite[Lemma 58]{GMPPRST}
	If $w$ be a word and $P(w)$ be the Hecke insertion tableau of $w$, then $w$ and row$(P(w))$ are $K$-Knuth equivalent.
	\label{pro3}
\end{lemma}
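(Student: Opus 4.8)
The plan is to induct on the length $n$ of $w$, reducing everything to a single step of Hecke insertion. Since each of the four $K$-Knuth relations only rewrites a factor of consecutive letters, $K$-Knuth equivalence is a congruence for concatenation: if $u\equiv v$ then $uz\equiv vz$ for any letter $z$. Writing $w=w'x$ and invoking the inductive hypothesis $w'\equiv \mathrm{row}(P(w'))$, it therefore suffices to prove the \emph{single-letter insertion lemma}: for every increasing tableau $Y$ and every letter $x$, one has $\mathrm{row}(Y)\cdot x\equiv \mathrm{row}(Y\xleftarrow{\,\mathrm{H}}x)$. Granting this, $w=w'x\equiv \mathrm{row}(P(w'))\cdot x\equiv \mathrm{row}(P(w')\xleftarrow{\,\mathrm{H}}x)=\mathrm{row}(P(w))$, and the base case $n\le 1$ is immediate.

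To prove the single-letter lemma I would follow the bumping route. Recall that $\mathrm{row}(Y)$ lists the rows of $Y$ from the bottom to the top, while Hecke insertion enters $x$ into the top row and pushes an output letter successively downward; thus appending $x$ on the right of $\mathrm{row}(Y)$ places it immediately after the top row, exactly where insertion begins. I would process the rows one at a time from the top, at each step rewriting the factor consisting of the current row together with the letter arriving from above. For an ordinary bump (the inserted value $v$ replaces the smallest entry $y$ strictly larger than it, the replacement keeping the tableau increasing) the required move $Rv\equiv yR'$, where $R'$ is $R$ with $y$ replaced by $v$, is the classical Knuth computation: one slides $v$ leftward past the larger entries using $yxz\equiv yzx$ and then extracts $y$ on the left using $xzy\equiv zxy$. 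The appending case is a trivial equality, and the absorbing case ($v$ equal to the last entry of its row) is handled by $v\equiv vv$. Concatenating these per-step equivalences telescopes to $\mathrm{row}(Y)\cdot x\equiv\mathrm{row}(Y\xleftarrow{\,\mathrm{H}}x)$.

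The main obstacle is the genuinely $K$-theoretic \emph{idempotent} bumps, where the row is left unchanged yet an output $y$ is still produced. When the failure is a row collision ($v$ already equals the left neighbour of $y$), the factor $Rv$ ends in $\dots v\,y\,\dots v$ and one checks $Rv\equiv yR$ by first sliding $v$ left and then applying $xyx\equiv yxy$ followed by the usual left-extraction; this is what happens at the top row in both computations of Figure \ref{two hecke}. The delicate flavour is a column collision. A short analysis of the bumping route, using the standard fact that a bumped letter moves weakly to the left as it descends, shows this can occur only when $v$ lands directly beneath a cell already containing $v$, the upper copy having been frozen by an idempotent bump one row higher. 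In isolation the row-local identity $Rv\equiv yR$ then \emph{fails}: the two words have different support, since $v\notin R$ while $y\in R$, and the $K$-Knuth relations preserve support. Hence the rewriting cannot be confined to one row. The remedy is to keep the whole reading word in play: the extra copy of $v$ sits in the adjacent, already-processed row to the right, and one reconciles the support against that copy using $v\equiv vv$ and $xyx\equiv yxy$. Organising this bookkeeping into a clean derivation that still telescopes is the technical heart of the argument.

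Alternatively, one can sidestep the case analysis by invoking Theorem \ref{pro2}. Arrange the letters of $w$ along a single antidiagonal skew shape $D_w$, one letter per row and per column, so that $D_w$ is (vacuously) an increasing skew tableau with $\mathrm{row}(D_w)=w$; its $K$-rectification is produced by jeu de taquin slides, hence is $K$-jeu de taquin equivalent to $D_w$ and therefore, by Theorem \ref{pro2}, $K$-Knuth equivalent to it. Since $K$-Knuth equivalence of tableaux is defined through their reading words, this yields $w=\mathrm{row}(D_w)\equiv\mathrm{row}(\mathrm{rect}(D_w))$. This route reduces the lemma to the standard identification of the Hecke insertion tableau $P(w)$ with $\mathrm{rect}(D_w)$, the $K$-theoretic analogue of the agreement between RSK insertion and jeu de taquin rectification; once that identification is available, the conclusion is immediate.
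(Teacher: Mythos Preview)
The paper does not supply its own proof of this lemma; it is quoted verbatim as \cite[Lemma~58]{GMPPRST} and used as a black box. So there is nothing in the present paper to compare your argument against, and your task is really to reproduce or replace the argument in \cite{GMPPRST}.

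Your primary approach---induct on the length of $w$ and reduce to the single-insertion identity $\mathrm{row}(Y)\cdot x\equiv\mathrm{row}(Y\xleftarrow{\,\mathrm{H}}x)$---is the right strategy and is essentially what \cite{GMPPRST} does. The case analysis you outline is accurate, and you have correctly isolated the one genuinely nontrivial point: in the ``column collision'' flavour of an idempotent bump, the row-local rewriting $Rv\equiv yR$ is false on the nose (the supports differ), so the derivation must borrow a letter from the adjacent already-processed row. Your diagnosis of why and how is correct; what remains is to write the explicit chain of $K$-Knuth moves across the two rows. That is bookkeeping, not a missing idea, but as written it is a sketch rather than a proof.

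Your alternative route via Theorem~\ref{pro2} has a real gap. You appeal to ``the standard identification of the Hecke insertion tableau $P(w)$ with $\mathrm{rect}(D_w)$'', but in the $K$-theoretic setting rectification is \emph{not} confluent: different choices of inner corners can yield non-isomorphic straight-shape tableaux, so $\mathrm{rect}(D_w)$ is not even well defined without further argument. Showing that \emph{some} rectification of $D_w$ equals $P(w)$ is precisely equivalent (via Theorem~\ref{pro2}) to the statement you are trying to prove, so this route is circular unless you supply an independent proof of that identification---which in practice amounts to the same row-by-row computation as your first approach.
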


\vspace{0.5cm}

\section{Backward local rules}

As seen in Section 2, the forward local rules for the growth diagram of Hecke insertion were designed by Patrias and Pylyavskyy in \cite{PP}. However, they did not provide the backward local rules, that is, given $\gamma,\mu,\lambda$ and the edge label between $\mu$ and $\gamma$, one hope to reconstruct $\lambda$, the edge label between $\lambda$ and $\upsilon$, and the filling of the square. In this section, we formulate the {\it backward local rules} which will be used in Section 4 to prove our main theorem. In describing the conclusions for the following 16 cases, we do not explicitly state in the conclusions when there is no $X$ in the square and there is no label between $\lambda$ and $\nu$, for the sake of brevity.

\vspace{0.5cm}

\textbf {Case 1. There is no label between $\mu$ and $\gamma$.}
\begin{itemize}
	\item [(B1)] If $\mu=\gamma$, then $\lambda=\upsilon$.
	\item [(B2)] If $\mu\neq\gamma$ and $\upsilon=\gamma$, then $\lambda=\mu$.
	\item [(B3)] If $\gamma/\mu$ is one box in the first row and $\gamma/\upsilon$ has a box in the first row, then $\lambda=\upsilon$ and the square contains an $X$.
	\item [(B4)] If $\gamma/\mu$ is one box in the first row and $\gamma_{1}=\upsilon_{1}$, then $\upsilon/\lambda$ is one box in the first row.
	\item [(B5)] If $\gamma/\mu$ is one box in row $i$ ($i\geq2$) and $\upsilon_{i}=\gamma_{i}$, then $\upsilon/\lambda$ is one box in row $i$.
	\item [(B6)] If $\gamma/\mu$ is one box in row $i$ ($i\geq2$), $\upsilon_{i}=\gamma_{i}-1$ and $\upsilon_{i-1}=\gamma_{i-1}$, then $\upsilon/\lambda$ is one box in row $i-1$.
	\item [(B7)] If $\gamma/\mu$ is one box in row $i$ ($i\geq2$), $\upsilon_{i}=\gamma_{i}-1$, and $\upsilon_{i-1}=\gamma_{i-1}-1$, then $\lambda=\upsilon$ and the edge between them is labeled $i-1$.
\end{itemize}

\textbf {Case 2. There is a label $r$ between $\mu$ and $\gamma$.}
\begin{itemize}
	\item [(B8)] If $r=1$ and $\upsilon=\gamma$, then $\lambda=\mu$ and the edge between $\lambda$ and $\upsilon$ is labeled $1$.
	\item [(B9)] If $r=1$ and $\upsilon_{1}\neq\gamma_{1}$, then $\lambda=\upsilon$ and the square contains an $X$.
	\item [(B10)] If $r=1$, $\upsilon\neq\gamma$ and $\upsilon_{1}=\gamma_{1}$, then $\lambda=\upsilon$ and the edge between them is labeled $1$.
	\item [(B11)] If $r\geq2$ and $\upsilon=\gamma$, then $\lambda=\mu$ and the edge between $\lambda$ and $\upsilon$ is labeled $r$.
	\item [(B12)] If $r\geq2$, $\gamma_{r}=\gamma_{r-1}$ and $\gamma_{r}=\upsilon_{r}$, then $\lambda=\upsilon$ and the edge between them is labeled $r$.
	\item [(B13)] If $r\geq2$, $\gamma_{r}=\gamma_{r-1}$ and $\gamma_{r}=\upsilon_{r}+1$, then $\lambda=\upsilon$ and the edge between them is labeled $r-1$.
	\item [(B14)] If $r\geq2$, $\gamma_{r}<\gamma_{r-1}$ and $\gamma_{r}=\upsilon_{r}$, then $\lambda=\upsilon$ and the edge between them is labeled $r$.
	\item [(B15)] If $r\geq2$, $\gamma_{r}<\gamma_{r-1}$, $\gamma_{r}=\upsilon_{r}+1$ and $\gamma_{r-1}=\upsilon_{r-1}$, then $\upsilon/\lambda$ is one box in row $r-1$.
	\item [(B16)] If $r\geq2$, $\gamma_{r}<\gamma_{r-1}$, $\gamma_{r}=\upsilon_{r}+1$ and $\gamma_{r-1}=\upsilon_{r-1}+1$, then $\lambda=\upsilon$ and the edge between them is labeled $r-1$.
\end{itemize}

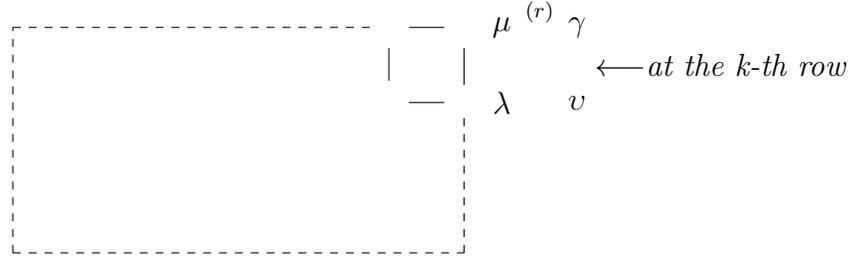
\begin{figure}
	$$
	\begin{tikzpicture}[scale=1]
		\hskip1.5cm
		\node (52) at (5,2) {$\lambda$};
		\node (62) at (6,2) {$\upsilon$};
		\node (53) at (5,3) {$\mu$};
		\node (63) at (6,3) {$\gamma$};
		\node at (7.9,2.5) {$\longleftarrow${\it at the k-th row}};
		\node at (5.5,3.2) {\tiny $(r)$};
		
		\draw (52)--(62)
		(52)--(53)
		(63)--(62)
		(63)--(53);
		\draw [dashed] (0,0)--(0,3)
		(0,3)--(4.8,3)
		(0,0)--(6,0)
		(6,0)--(6,1.8);
	\end{tikzpicture}
	$$
	\caption{A square of a rectangular growth diagram.}
	\label{fig:10}
\end{figure}

Before proving these backward local rules, we need the following three necessary facts.

\begin{claim}
	\cite[Claim 4.17]{PP} In Hecke growth diagram, for any partition $\mu$ and its right neighbor partition $\gamma$, the two partitions differ by at most one box, i.e. $|\gamma/\mu|\leq1$.
    \label{cl:3.2}
\end{claim}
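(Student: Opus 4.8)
The plan is to prove the claim by induction on the squares of the growth diagram, maintaining a joint invariant that controls both the horizontal and the vertical edges simultaneously. Concretely, I would show that at every stage of the construction, for each square as labelled in Fig.~\ref{fig:5}, the already-computed edges satisfy $\lambda\subseteq\mu$ with $|\mu/\lambda|\le 1$ (left edge) and $\lambda\subseteq\upsilon$ with $|\upsilon/\lambda|\le 1$ (bottom edge), and the forward local rules then force $\mu\subseteq\gamma$ with $|\gamma/\mu|\le 1$ (top edge) and $\upsilon\subseteq\gamma$ with $|\gamma/\upsilon|\le 1$ (right edge). The joint formulation is needed because a square's left edge is the right edge of the square to its left, and its bottom edge is the top edge of the square below it, so the two directions cannot be decoupled. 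The base case is immediate: along the bottom row and the left column every partition equals $\emptyset$, so all those edges have difference zero.

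For the inductive step I would run through the ten forward local rules (F1)--(F10). For the top (horizontal) edge $\gamma/\mu$ the bound is essentially read off from the statement of each rule: in (F1), (F5), (F9) the rule literally declares $\gamma/\mu$ to be a single box, in (F2), (F6), (F7), (F8), (F10) it declares $\gamma=\mu$, and the only rules requiring the inductive hypothesis are (F3) in the subcase $\gamma=\upsilon$ (where $\mu=\lambda$, so $\gamma/\mu=\upsilon/\lambda$ is at most one box by the bottom-edge hypothesis) and (F4), where $\gamma=\upsilon\cup\mu$. In (F4) I would argue that, since Case~3 excludes $\mu=\lambda$ and $\upsilon=\lambda$, both $\mu/\lambda$ and $\upsilon/\lambda$ are exactly one box, and the hypothesis $\upsilon\nsubseteq\mu$ guarantees these two boxes occupy distinct cells; hence $\upsilon\cup\mu=\lambda\cup\{a,b\}$ with $a\neq b$, giving $\gamma/\mu=\{b\}$ and $\gamma/\upsilon=\{a\}$, each a single box. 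The right (vertical) edge $\gamma/\upsilon$ is then checked the same way, rule by rule, using the roles of $\mu$ and $\upsilon$ symmetrically together with the edge-label data.

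The main obstacle I expect is the bookkeeping in the label-carrying rules (F6)--(F10), where the edge between $\lambda$ and $\upsilon$ (resp. $\mu$ and $\gamma$) is decorated by a terminate-row index $i$. To handle these I would strengthen the invariant to record not only the one-box containment but also the position of the terminate box relative to $\mu/\upsilon$, so that the "box directly below / immediately to the right of the terminate box" conditions in (F7)--(F10) can be tracked consistently across successive squares; verifying that each of these rules still outputs either $\gamma=\mu$ or a single-box growth in row $i+1$ is the delicate part. As a consistency check on the whole argument, I would note the conceptual reason behind the claim: by Theorem~\ref{PQ-w} the partitions along the border sequences are the successive shapes arising under Hecke insertion, and the insertion algorithm itself produces an output tableau $Z$ that either has the same shape as $Y$ or exactly one additional box, which is precisely the one-box growth the claim asserts; applying this to the subrectangles gives the interior edges as well.
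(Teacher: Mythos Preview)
The paper does not give its own proof of this claim; it is quoted verbatim from \cite[Claim~4.17]{PP} and used as a black box in the proof of the backward local rules. So there is no ``paper's proof'' to compare to. Your inductive approach through the ten forward rules is the natural argument and is essentially correct for the horizontal edges, with one caveat.

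Your joint invariant is mis-stated on the vertical edges. You assert that the left edge satisfies $|\mu/\lambda|\le 1$ and the right edge $|\gamma/\upsilon|\le 1$, but this is false in general: vertical edges in a Hecke growth diagram satisfy only the rook-strip condition of Claim~\ref{cl:3.3}, and $\gamma/\upsilon$ can have several boxes (moving up one row corresponds to allowing one more letter value, and that value may occur in many columns of the word). Your (F4) analysis explicitly uses the false statement that $\mu/\lambda$ is exactly one box. Fortunately the horizontal-edge induction does not actually need the single-box vertical bound: in (F4), knowing that $\upsilon/\lambda$ is a single box (from the horizontal hypothesis on the bottom edge), that $\lambda\subseteq\mu$, and that $\upsilon\nsubseteq\mu$ already gives $\gamma\setminus\mu=(\upsilon\cup\mu)\setminus\mu=\upsilon\setminus\mu$, which is the single box of $\upsilon/\lambda$. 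The remaining rules are handled exactly as you say, using only the horizontal hypothesis. So either drop the vertical half of the invariant or replace it with the rook-strip statement, and the argument goes through.
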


\begin{claim}
	\cite[Claim 4.18]{PP} In Hecke growth diagram, for any partition $\upsilon$ and its top neighbor partition $\gamma$, $\gamma/\upsilon$ is a rook strip, that is, no two boxes in $\gamma/\upsilon$ are in the same row or column.
	\label{cl:3.3}
\end{claim}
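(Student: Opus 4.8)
The plan is to avoid a case-by-case verification of the ten forward rules (F1)--(F10) and instead reduce the statement to a property of a single increasing tableau, using Theorem~\ref{PQ-w}. The key observation is that the forward local rules are \emph{causal}: the label $\gamma$ produced at the top-right corner of a square depends only on $\lambda,\upsilon,\mu$ and the edge labels lying weakly to the left of and below that corner. Consequently, writing $w[i]=w_{1}\cdots w_{i}$ for the length-$i$ prefix of $w$, the first $i$ columns of the Hecke growth diagram of $w$ coincide verbatim with the Hecke growth diagram of $w[i]$: the two fillings agree on these columns, and both are initialized with the all-$\emptyset$ bottom row and left border. First I would record this reduction as a lemma.

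Next I would fix an arbitrary vertical edge of the diagram, joining a partition $\upsilon$ to its top neighbor $\gamma$; this edge lies in some column $i$ and connects the corners at heights $j-1$ and $j$. By the causality observation, the whole sequence $\emptyset=\upsilon^{(i)}_{0}\subseteq\upsilon^{(i)}_{1}\subseteq\cdots$ of partitions read upward along column $i$ is exactly the right-border sequence of the Hecke growth diagram of $w[i]$, with $\upsilon=\upsilon^{(i)}_{j-1}$ and $\gamma=\upsilon^{(i)}_{j}$. By Theorem~\ref{PQ-w} this right-border sequence encodes the Hecke insertion tableau $P(w[i])$, and by the reading convention for $P$ the cells of $\upsilon^{(i)}_{j}/\upsilon^{(i)}_{j-1}=\gamma/\upsilon$ are precisely the boxes of $P(w[i])$ carrying the entry $j$.

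It then remains to recall that $P(w[i])$ is a straight increasing tableau, so its entries strictly increase along each row and down each column. Hence no two boxes bearing the common entry $j$ can share a row or a column, which is exactly the assertion that $\gamma/\upsilon$ is a rook strip. Since the vertical edge was chosen arbitrarily, this establishes the claim for every pair consisting of a partition $\upsilon$ and its top neighbor $\gamma$.

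The step I expect to require the most care is the causality reduction: one must check that the rules (F1)--(F10), including the bookkeeping of the horizontal edge labels $r$ and of the terminate boxes, never reference any data to the right of the column currently being filled, so that truncating the diagram to its first $i$ columns genuinely reproduces the diagram of $w[i]$. Once this is granted, the rook-strip property is immediate from Theorem~\ref{PQ-w} together with the definition of an increasing tableau, and one sidesteps entirely the delicate sub-case analysis (most notably the rules (F5)--(F10) governing terminate boxes) that a direct inductive verification running along the local rules, jointly with Claim~\ref{cl:3.2}, would otherwise demand.
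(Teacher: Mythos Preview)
The paper does not supply its own proof of this claim; it is imported wholesale from \cite[Claim~4.18]{PP}, where it is established together with Claim~4.17 by a joint induction running through the forward local rules (F1)--(F10). So the comparison is really between your argument and the original one in \cite{PP}.

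Your route is genuinely different and more conceptual: rather than checking each of the ten rules, you invoke causality to identify column~$i$ with the right border of the growth diagram of the prefix $w[i]$, then appeal to Theorem~\ref{PQ-w} to conclude that this border encodes the straight increasing tableau $P(w[i])$, whence the rook-strip condition drops out immediately. The causality reduction is sound: the forward rules compute $\gamma$ and the outgoing edge label solely from $\lambda,\mu,\upsilon$ and the incoming edge label between $\lambda$ and $\upsilon$, so truncation to the first $i$ columns really does reproduce the diagram of $w[i]$.

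The one point you must address is circularity. In \cite{PP} the numbering is Theorem~4.16 followed by Claims~4.17 and~4.18, and the claims are proved \emph{inside} the proof of Theorem~4.16: they form part of the inductive hypothesis needed to show that the local rules track Hecke insertion. In particular, even to make sense of the statement ``the right border encodes an increasing tableau $P(w)$'' one already needs each $\upsilon_{j}/\upsilon_{j-1}$ on the right border to be a rook strip. So if you take Theorem~\ref{PQ-w} as a black box you are, at least in the logical architecture of \cite{PP}, assuming what you set out to prove. To make your argument stand on its own you would need to supply, or point to, a proof of Theorem~\ref{PQ-w} that does not pass through Claim~4.18---for instance, by showing directly that the partition at corner $(i,j)$ is the shape of the Hecke insertion tableau of $w[i]$ restricted to letters at most $j$ (via Theorem~\ref{th:3.4}), which would give the rook-strip property as a by-product rather than as an input.
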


\begin{theorem}
	\cite[Proposition 3.2]{CGP} Let $w=w_1w_2\cdots w_n$ be a word of positive integers, and $k$ be the maximal element appearing in $w$.
	Let $w'=a_1a_2\cdots a_m$ be the word obtained from $w$ by deleting the elements equal to $k$.
	Assume that $T$ is the Hecke insertion tableau of $w$ and $T'$ is the Hecke insertion tableau of $w'$.
	Then $T'$ is obtained from $T$ by deleting the squares occupied with $k$.
	\label{th:3.4}
\end{theorem}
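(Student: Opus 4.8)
The plan is to prove the statement by induction on the length $n$ of $w$, reducing it to a single assertion: that Hecke-inserting one letter commutes with deleting all boxes equal to the maximum $k$. Write $\mathrm{del}_k(U)$ for the array obtained from an increasing tableau $U$ by erasing every box containing $k$, and let $\overline{w}=w|_{[1,k-1]}$ be $w$ with its copies of $k$ removed, so that $w'=\overline{w}$ and $T'=P(\overline{w})$. With $P_i=P(w_1\cdots w_i)$, I would maintain the inductive hypothesis $\mathrm{del}_k(P_i)=P(\overline{w_1\cdots w_i})$. When $w_i=k$ the step is immediate: since $k$ is the maximal letter, $k$ is greater than or equal to every entry of the first row, so its insertion terminates at once in row $1$, either appending a new box filled with $k$ or leaving the tableau unchanged; in both cases no box other than a $k$-box is created or moved, so $\mathrm{del}_k(P_i)=\mathrm{del}_k(P_{i-1})$, while $\overline{w_1\cdots w_i}=\overline{w_1\cdots w_{i-1}}$. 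Thus everything reduces to the case $w_i=x<k$, i.e. to the \emph{commutation lemma} $\mathrm{del}_k(U\xleftarrow{\,\mathrm{H}}x)=\bigl(\mathrm{del}_k(U)\xleftarrow{\,\mathrm{H}}x\bigr)$ for any increasing tableau $U$ with maximum $k$ and any $x<k$.

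Before attacking the lemma I would record the structure of the $k$-boxes of $U$. Since $k$ is maximal and rows increase strictly, a $k$ can never have a box to its right, so every $k$ sits at the right end of its row; since columns increase strictly no two $k$'s share a column, and together with row-strictness this forces the $k$-rows to have strictly decreasing lengths read downward. Moreover, if row $i$ ends with $k$ in column $c$ (so row $i$ has length $c$), then the box $(i+1,c)$ cannot exist, for otherwise column-strictness would give $k<(i+1,c)\le k$; hence each row lying immediately below a $k$-row is strictly shorter. These facts guarantee that $\mathrm{del}_k(U)$ is again a straight-shape increasing tableau and that $\mathrm{row}(\mathrm{del}_k(U))$ is $\mathrm{row}(U)$ with the $k$'s deleted; they are also exactly what is needed to see that the relevant column neighbours behave consistently in $U$ and in $\mathrm{del}_k(U)$.

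For the commutation lemma I would run the insertion of $x$ into $U$ and into $\mathrm{del}_k(U)$ in parallel, comparing the bumping route row by row. In a row with no $k$, or in a $k$-row where the bumped entry $y$ is a genuine value $y<k$, the trailing $k$ of $U$ plays no role in locating the smallest entry exceeding the current value, and the structural facts above show that the column tests for a replacement give the same verdict in both tableaux (a $k$ lying directly below a candidate box exceeds the value being carried, matching the absence of that box in $\mathrm{del}_k(U)$); hence the carried value and the action on non-$k$ boxes agree. The only genuinely delicate situation is the one row where the two runs part company: when the current value is at least every non-$k$ entry of a $k$-row. There $\mathrm{del}_k(U)$ tries to append the value at the end of the row while $U$ instead bumps its $k$; the crucial observation is that once a $k$ is bumped it is carried into the next row and, being maximal, terminates immediately, so it can only create or disturb $k$-boxes. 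One then checks in the three subcases (the candidate value strictly exceeds the last non-$k$ entry and passes the column test, fails that test, or merely equals that entry) that the two runs leave identical non-$k$ content. This termination-versus-bumping reconciliation, together with verifying that the column conditions coincide precisely because $k$ is the maximum, is the main obstacle; the rest is bookkeeping.

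Finally, I would note the quick consistency check offered by the $K$-theoretic machinery: since $w'=w|_{[1,k-1]}$, Lemma~\ref{pro3} and Lemma~\ref{pro1} give $\mathrm{row}(\mathrm{del}_k(T))\equiv w'\equiv \mathrm{row}(T')$, so $\mathrm{del}_k(T)$ and $T'$ are at least $K$-Knuth equivalent. This alone does not force equality, since a $K$-Knuth class may contain several distinct increasing tableaux, which is exactly why the inductive insertion argument above is needed to upgrade the equivalence to the claimed identity $T'=\mathrm{del}_k(T)$.
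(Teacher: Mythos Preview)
The paper does not prove this statement: it is quoted verbatim as \cite[Proposition~3.2]{CGP} and used as a black box in the verification of the backward local rules, so there is no ``paper's own proof'' to compare against.

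Your inductive scheme is the natural one and is essentially correct. The reduction to the commutation lemma $\mathrm{del}_k(U\xleftarrow{\mathrm{H}}x)=\mathrm{del}_k(U)\xleftarrow{\mathrm{H}}x$ for $x<k$ is right, and your structural observations about the $k$-boxes (each at a row end, rows containing $k$ have strictly decreasing lengths, the row just below a $k$-row is strictly shorter) are exactly what makes the row-by-row comparison go through. The one place to be a little more explicit is the ``entry above'' check: when the two insertions test a box $(i-1,j)$, you should note that this box is never a $k$-box. Indeed, if $(i,j)$ carries a non-$k$ value then column strictness forces $(i-1,j)<k$; and in the divergence row, where $U$ tries to replace the $k$ at $(i,c)$ while $\mathrm{del}_k(U)$ tries to append at column $c$, the box $(i-1,c)$ again satisfies $(i-1,c)<(i,c)=k$, so both tableaux see the same value there and the replace/append tests literally coincide. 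With that remark the three subcases you list collapse cleanly, and your observation that a bumped $k$ terminates in the very next row (touching only $k$-boxes) finishes the argument. Your final paragraph is also well judged: $K$-Knuth equivalence of $\mathrm{del}_k(T)$ and $T'$ is immediate from Lemmas~\ref{pro1} and~\ref{pro3}, but equality genuinely requires the insertion-level argument since $K$-Knuth classes need not be singletons.
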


Our proof is based on the Hecke insertion algorithm. For convenience, we assume the position of the square in Fig.\ref{fig:10} is in the $k$-th row of the rectangle.

\begin{proof}
	Let the Hecke insertion tableaux of $\mu$, $\upsilon$, and $\gamma$ be $P_{\mu}$, $P_{\upsilon}$, and $P_{\gamma}$, respectively. A {\it $k$-box} is a box that contains the letter $k$. 
	\\
	\item [(B1)] Since $\mu=\gamma$ and there is no label between $\mu$ and $\gamma$, this implies there is no $X$ below $\mu$ and $\gamma$. Thus, there is no $X$ below $\lambda$ and $\upsilon$. This means that $\lambda$ and $\upsilon$ have the same Hecke insertion tableau, and there is no label between them. Therefore, $\lambda=\upsilon$.
	\\
	\item [(B2)] Since $\upsilon=\gamma$, there is no $X$ to the left of $\upsilon$ and $\gamma$. Thus, there is no $X$ to the left of $\lambda$ and $\mu$. This means that $\lambda$ and $\mu$ have the same Hecke insertion tableau. Therefore, $\lambda=\mu$.
	\\
	\item [(B3)] Suppose there is a $k$-box in the first row of $P_{\mu}$. Let $P_{\gamma}=(P_{\mu}\xleftarrow{\,\mathrm{H}}l)$, where $l\leq k$. By the Hecke insertion algorithm, we get $\mu_{1}=\gamma_{1}$, which contradicts the fact that $\gamma/\mu$ is one box in the first row. Therefore, there is no $k$-box in the first row of $P_{\mu}$. Since $\gamma/\upsilon$ has a box in the first row, by Theorem \ref{th:3.4}, $P_{\gamma}$ must have a $k$-box. This implies that the algorithm $P_{\gamma}=(P_{\mu}\xleftarrow{\,\mathrm{H}}l)$ places a $k$-box at the end of the first row of $P_{\mu}$. Thus, there is an $X$ in the square and no $X$ below $\lambda$ and $\upsilon$, therefore $\lambda=\upsilon$.
	\\
	\item [(B4)] By the proof of (B3), we know that there is no $k$-box in the first row of $P_{\mu}$. Thus, by Theorem \ref{th:3.4}, we have $\mu_{1}=\lambda_{1}$. Since $\upsilon_{1}=\gamma_{1}=\mu_{1}+1=\lambda_{1}+1$, by Claim \ref{cl:3.2}, $\lambda$ is obtained by deleting one box from the first row of $\upsilon$. Thus, $\upsilon/\lambda$ is one box in the first row.
	\\
	\item [(B5)] Since $\gamma/\mu$ is one box in row $i$, there is no $k$-box in the $i$-th row of $P_{\mu}$. Therefore, $\mu_{i}=\lambda_{i}$ (cf. Theorem \ref{th:3.4}). Since $\upsilon_{i}=\gamma_{i}=\mu_{i}+1=\lambda_{i}+1$, by Claim \ref{cl:3.2}, $\upsilon/\lambda$ is one box in row $i$.
	\\
	\item [(B6)] From the proof of (B5), we know that there is no $k$-box in the $i$-th row of $P_{\mu}$. Since $\upsilon_{i}=\gamma_{i}-1$, $\gamma$ has a $k$-box in row $i$. 
	By algorithm $P_{\gamma}=(P_{\mu}\xleftarrow{\,\mathrm{H}}l)$ ($l\leq k$), we can conclude that the $k$-box in row $i$ of $P_{\gamma}$ results from inserting integer $k$ into the $i$-th row of $P_{\mu}$. Thus, the  $(i-1)$-th row of $P_{\mu}$ has a $k$-box. Therefore, $\lambda_{i-1}=\mu_{i-1}-1$ and $\upsilon_{i-1}=\gamma_{i-1}=\mu_{i-1}=\lambda_{i-1}+1$. By Claim \ref{cl:3.2}, $\upsilon/\lambda$ is one box in row $i-1$.
	\\
	\item [(B7)] From the proof of (B6), we know that there is no $k$-box in row $i$ of $P_{\mu}$, $P_{\gamma}$ has a $k$-box in row $i$, and $P_{\mu}$ has a $k$-box in row $i-1$. Since $\upsilon_{i-1}=\gamma_{i-1}-1$, there is a $k$-box in row $i-1$ of $P_{\gamma}$. Similar to the proof of (B6), the $k$-box in row $i$ of $P_{\gamma}$ results from inserting integer $k$ into the $i$-th row of $P_{\mu}$. Since $P_{\gamma}$ has a $k$-box in row $i-1$, some integer $x$ was inserted into the $(i-1)$-th row of $P_{\mu}$, which did not change this row but produced an output integer $k$. Therefore, while $x$ was inserted into the $(i-1)$-th row of $P_{\lambda}$, this row did not change and the insertion algorithm $P_{\upsilon}=(P_{\lambda}\xleftarrow{\,\mathrm{H}}l)$ terminated, with the terminate box being the rightmost box in row $i-1$ of $P_{\lambda}$. Thus, $P_{\lambda}$ and $P_{\upsilon}$ have the same shape, i.e. $\lambda=\upsilon$, and the edge between them is labeled $i-1$.
	\\
	\item [(B8)] By the proof of (B2), we know that there is no $X$ to the left of $\upsilon$ and $\gamma$ and  $\lambda=\mu$. Therefore, according to Hecke insertion algorithm, $P_{\gamma}=(P_{\mu}\xleftarrow{\,\mathrm{H}}l)\Leftrightarrow P_{\upsilon}=(P_{\lambda}\xleftarrow{\,\mathrm{H}}l)$, where $l\leq k$. The edge between $\lambda$ and $\upsilon$ is labeled $1$.
	\\
	\item [(B9)] Since $\upsilon_{1}\neq\gamma_{1}$, by Theorem \ref{th:3.4}, there is a $k$-box in the first row of $P_{\gamma}$. Since $\mu=\gamma$ and $r=1$, according to the Hecke insertion algorithm, the first row of $P_{\mu}$ is identical to the first row of $P_{\gamma}$, that is, $P_{\mu}$ has a $k$-box in the first row and $P_{\gamma}=(P_{\mu}\xleftarrow{\,\mathrm{H}}k)$. Thus, there is an $X$ in the square, and no $X$ appears below $\lambda$ and $\upsilon$. Therefore, $\lambda=\upsilon$.
	\\
	\item [(B10)] Since $\upsilon_{1}=\gamma_{1}$ and $r=1$, there is no $k$-box in the first row of $P_{\gamma}$ and $P_{\mu}$. Hence, $\lambda_{1}=\mu_{1}$, and the process of the Hecke insertion algorithm $P_{\gamma}=(P_{\mu}\xleftarrow{\,\mathrm{H}}l)$ in the first row of $P_{\mu}$ is the same as the process for $P_{\upsilon}=(P_{\lambda}\xleftarrow{\,\mathrm{H}}l)$ in the first row of $P_{\lambda}$, where $l\leq k$. Therefore, $\lambda=\upsilon$, and the edge between them is labeled $1$.
	\\
	\item [(B11)] The proof is the same as (B8).
	\\
	\item [(B12)] Since $\gamma_{r}=\upsilon_{r}$, there is no $k$-box in row $r$ of $P_{\gamma}$. Thus, the terminate box $c$ of $P_{\gamma}$ is also the terminate box of $P_{\upsilon}$, and $k$ is not replaced as an output integer during the algorithm. This implies that $P_{\gamma}=(P_{\mu}\xleftarrow{\,\mathrm{H}}l)\Leftrightarrow P_{\upsilon}=(P_{\lambda}\xleftarrow{\,\mathrm{H}}l)$, where $l\leq k$. Therefore, $\lambda=\upsilon$, and the edge between them is labeled $r$.
	\\
	\item [(B13)] Since $\gamma_{r}=\upsilon_{r}+1$, there is a $k$-box in the $r$-th row of $P_{\gamma}$, and the terminate box of $P_{\gamma}$ is the $k$-box. Since the edge between $\mu$ and $\gamma$ is labeled $r$, by Hecke insertion algorithm, the $r$-th row of $P_{\mu}$ is identical to the $r$-th row of $P_{\gamma}$, that is, there is a $k$-box in the $r$-th row of $P_{\mu}$ (e.g. Fig.\ref{fig:11}a). So, by Hecke insertion algorithm, while $P_{\gamma}=(P_{\mu}\xleftarrow{\,\mathrm{H}}l)$ terminates at $k$-box in row $r$ of $P_{\mu}$, $P_{\upsilon}=(P_{\lambda}\xleftarrow{\,\mathrm{H}}l)$ terminates at  $x$-box in row $r-1$ of $P_{\lambda}$. Thus, $\lambda=\upsilon$ and the edge between them is labeled $r-1$.
	\\
	\item [(B14)] The proof is the same as (B12).
	\\
	\item [(B15)] Since $\gamma_{r}=\upsilon_{r}+1$ and $\gamma_{r-1}=\upsilon_{r-1}$, there is a $k$-box in row $r$ of $P_{\gamma}$ and no $k$-box in row $r-1$. Because the edge between $\mu$ and $\gamma$ is labeled $r$, the $k$-box in row $r$ of $P_{\gamma}$ is the terminate box of the algorithm $P_{\gamma}=(P_{\mu}\xleftarrow{\,\mathrm{H}}l)$. This implies there is a $k$-box in row $r-1$ of $P_{\mu}$ (e.g. Fig.\ref{fig:11}b). According to the Hecke insertion algorithm, inserting some integer $x$ into the $(r-1)$-th row of $P_{\mu}$ will replace $k$ with $x$ and insert $k$ into the next row. Therefore, while $P_{\gamma}=(P_{\mu}\xleftarrow{\,\mathrm{H}}l)$ terminates in row $r$, $P_{\upsilon}=(P_{\lambda}\xleftarrow{\,\mathrm{H}}l)$ terminates in row $r-1$, with the terminating box being an $x$-box, and $P_{\upsilon}$ having one more box than $P_{\lambda}$ in row $r-1$.
	\\
	\item [(B16)] By the proof of (B15), we know that there is a $k$-box in both row $r$ and row $r-1$ of $P_{\mu}$. Since $\gamma_{r}=\upsilon_{r}+1$ and $\gamma_{r-1}=\upsilon_{r-1}+1$, there is also a $k$-box in both row $r$ and row $r-1$ of $P_{\gamma}$ (see Fig.\ref{fig:11}c). Since the edge between $\mu$ and $\gamma$ is labeled $r$, according to the Hecke insertion algorithm, inserting some integer $x$ into the $(r-1)$-th row of $P_{\mu}$ will produce an output integer $k$ without changing this row, and then insert $k$ into the next row. Therefore, while $P_{\gamma}=(P_{\mu}\xleftarrow{\,\mathrm{H}}l)$ terminates in row $r$, $P_{\upsilon}=(P_{\lambda}\xleftarrow{\,\mathrm{H}}l)$ terminates in row $r-1$, and $\lambda$ has the same shape as $\upsilon$, thus $\lambda=\upsilon$.
\end{proof}

\begin{figure}
	$$
	\Einheit.2cm
	\PfadDicke{.5pt}
	\Pfad(0,0),111111\endPfad
	\Pfad(3,3),111\endPfad
	\Pfad(3,6),111\endPfad
	\Pfad(3,0),222222\endPfad
	\Pfad(6,0),222222222\endPfad
	\Label\ro{k}(4,1)
	\Label\ro{x}(4,4)
	\Label\ro{=}(9,4)
	\Label\ro{P_{\gamma}}(3,-2)
	\hbox{\hskip2cm}
	\Einheit.2cm
	\PfadDicke{.5pt}
	\Pfad(0,0),111111\endPfad
	\Pfad(3,3),111\endPfad
	\Pfad(3,6),111\endPfad
	\Pfad(3,0),222222\endPfad
	\Pfad(6,0),222222222\endPfad
	\Label\ro{k}(4,1)
	\Label\ro{x}(4,4)
	\Label\ro{\xleftarrow{\,\mathrm{H}}l}(8,4)
	\Label\ro{P_{\mu}}(3,-2)
	\hbox{\hskip3cm}
	\Einheit.2cm
	\PfadDicke{.5pt}
	\Pfad(0,0),111111\endPfad
	\Pfad(3,3),111111111\endPfad
	\Pfad(9,6),111\endPfad
	\Pfad(3,0),222\endPfad
	\Pfad(6,0),222\endPfad
	\Pfad(9,3),222\endPfad
	\Pfad(12,3),222222\endPfad
	\Label\ro{k}(4,1)
	\Label\ro{x}(10,4)
	\Label\ro{=}(15,4)
	\Label\ro{P_{\gamma}}(6,-2)
	\hbox{\hskip2.6cm}
	\Einheit.2cm
	\PfadDicke{.5pt}
	\Pfad(0,0),111111\endPfad
	\Pfad(3,3),111111111\endPfad
	\Pfad(9,6),111\endPfad
	\Pfad(3,0),222\endPfad
	\Pfad(6,0),222\endPfad
	\Pfad(9,3),222\endPfad
	\Pfad(12,3),222222\endPfad
	\Label\ro{k}(4,1)
	\Label\ro{k}(10,4)
	\Label\ro{\xleftarrow{\,\mathrm{H}}l}(14,4)
	\Label\ro{P_{\mu}}(6,-2)
	\hbox{\hskip4cm}
	\Einheit.2cm
	\PfadDicke{.5pt}
	\Pfad(0,0),111111\endPfad
	\Pfad(3,3),111111111\endPfad
	\Pfad(9,6),111\endPfad
	\Pfad(3,0),222\endPfad
	\Pfad(6,0),222\endPfad
	\Pfad(9,3),222\endPfad
	\Pfad(12,3),222222\endPfad
	\Label\ro{k}(4,1)
	\Label\ro{k}(10,4)
	\Label\ro{=}(15,4)
	\Label\ro{P_{\gamma}}(6,-2)
	\hbox{\hskip2.6cm}
	\Einheit.2cm
	\PfadDicke{.5pt}
	\Pfad(0,0),111111\endPfad
	\Pfad(3,3),111111111\endPfad
	\Pfad(9,6),111\endPfad
	\Pfad(3,0),222\endPfad
	\Pfad(6,0),222\endPfad
	\Pfad(9,3),222\endPfad
	\Pfad(12,3),222222\endPfad
	\Label\ro{k}(4,1)
	\Label\ro{k}(10,4)
	\Label\ro{\xleftarrow{\,\mathrm{H}}l}(14,4)
	\Label\ro{P_{\mu}}(6,-2)
	\Label\ro{\leftarrow}${\tiny{\it the r-th row}}$(13,0.5)
	\hskip4cm
	$$
	\vspace{0.1cm}
	\centerline{\small a. An illustration of (B13)
		\hskip1.9cm
		b. An illustration of (B15)
		\hskip1.9cm
		c. An illustration of (B16)}
	\caption{Some illustrations for backward local rules.}
	\label{fig:11}
\end{figure}

In view of the Hecke growth diagram's forward local rules and backward local rules, we have the following theorem. Fig.\ref{fig:12} illustrates this.

\begin{theorem}\label{Th3.4}
	Let $R$ be a rectangle with $n$ columns and $m$ rows. The $01$-fillings of $R$ with the property that each column contains at most one $1$ are in bijection with sequences $(\emptyset=\mu_0,e_1,\mu_1,e_2,\mu_2,\ldots,\mu_{n-1},e_n,\mu_n=\upsilon_m,\upsilon_{m-1},\ldots,\upsilon_1,\upsilon_0=\emptyset)$. The sequence satisfies the following conditions:
	\begin{itemize}
		\item $\mu_i$ and $\upsilon_j$ are partitions for $0\leq i \leq n$ and $0\leq j \leq m$.
		
		\item For $1 \leq i \leq n$, $\mu_{i-1}\subseteq\mu_i$, and $\mu_{i-1}$ and $\mu_i$ differ by at most one box.
		
		\item For \( 1 \leq j \leq m \), \( \upsilon_{j-1} \subseteq \upsilon_j \), and \( \upsilon_j / \upsilon_{j-1} \) is a rook strip.
		
		\item For \( 1 \leq k \leq n \), \( e_k \) is a non-negative integer and $e_k$ is not greater than the number of rows of $u_k$. $\mu_{k-1}\neq \mu_k$ if $e_k=0$, while $\mu_{k-1}=\mu_k$ if $e_k>0$.
		
	\end{itemize}
\end{theorem}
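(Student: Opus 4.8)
The plan is to realize the asserted correspondence as the mutually inverse pair of maps $\Phi$ and $\Psi$ induced by the forward local rules (F1)--(F10) and the backward local rules (B1)--(B16). For the map $\Phi$, given a $01$-filling of $R$ in which every column carries at most one $1$, I would initialize all corners along the bottom row and left column of the corner array with $\emptyset$ and then apply the forward rules square by square in the usual reading order (left to right within a row, proceeding upward). Since the forward rules of Patrias and Pylyavskyy are deterministic and well defined, this yields a fully labeled Hecke growth diagram; reading off the top border produces $\emptyset=\mu_0\subseteq\cdots\subseteq\mu_n$ together with the horizontal edge labels $e_1,\ldots,e_n$, and reading off the right border produces $\emptyset=\upsilon_0\subseteq\cdots\subseteq\upsilon_m=\mu_n$.

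Next I would check that the output of $\Phi$ satisfies the four listed conditions. That consecutive partitions of the $\mu$-chain differ by at most one box is exactly Claim \ref{cl:3.2}, and that each $\upsilon_j/\upsilon_{j-1}$ is a rook strip is exactly Claim \ref{cl:3.3}. For the edge-label condition I would catalogue, directly from the forward rules, precisely when a positive label is attached to a top edge: this happens in (F2), (F6), (F7), (F8) and (F10), in each of which $\gamma=\mu$ and the label is a row index of $\mu_k$, hence at most the number of rows of $\mu_k$, while the shape-enlarging rules (F1), (F5), (F9) attach no label. Comparing these alternatives against the shape-preserving rules yields the edge-label condition.

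For the inverse map $\Psi$, I would start from a sequence obeying the four conditions and run the backward local rules square by square in reverse reading order, beginning at the top-right square, where $\gamma=\mu_n=\upsilon_m$, its western neighbor $\mu_{n-1}$, its southern neighbor $\upsilon_{m-1}$, and the top label $e_n$ are all known. At each square the quadruple consisting of $\gamma$, $\mu$, $\upsilon$ and the label between $\mu$ and $\gamma$ falls into exactly one of (B1)--(B16), which reconstructs $\lambda$, the label between $\lambda$ and $\upsilon$, and the filling of the square. By induction along the reverse reading order I would verify that the backward rules return southwest data again satisfying the four conditions, so that every reconstructed interior corner is a genuine partition, every reconstructed horizontal edge obeys the label condition, and every reconstructed vertical step is a rook strip; in particular the recovered bottom row and left column are identically $\emptyset$, so the result is an honest filling of $R$, and since each backward case deposits at most one $X$ the column constraint is automatic.

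It then remains to show $\Psi\circ\Phi=\mathrm{id}$ and $\Phi\circ\Psi=\mathrm{id}$, which reduces to the single-square assertion that, with $\mu$ and $\upsilon$ held fixed, the forward and backward rules are inverse bijections between the southwest data (the corner $\lambda$, the label on the edge between $\lambda$ and $\upsilon$, and whether the square carries an $X$) and the northeast data (the corner $\gamma$ and the label on the edge between $\mu$ and $\gamma$); propagating this local inversion through the diagram yields the global one. This single-square inversion is exactly the content of the proof of the backward rules established above, via the Hecke insertion algorithm and Theorem \ref{th:3.4}. I expect the main obstacle to be the bookkeeping in that case analysis: confirming that the sixteen backward cases are exhaustive and mutually exclusive over all configurations permitted by the four conditions, and that each one exactly reverses its forward counterpart. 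The most delicate subcases should be those separating (F7)--(F10) from one another, equivalently (B12)--(B16), where one must locate the terminate box of $\upsilon$ relative to the boxes of $\mu/\upsilon$; I would resolve these precisely as in the backward-rule proof, by tracking the row at which Hecke insertion terminates.
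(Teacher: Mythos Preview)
Your proposal is correct and takes essentially the same approach as the paper: the paper's ``proof'' of Theorem~\ref{Th3.4} is literally the single sentence ``In view of the Hecke growth diagram's forward local rules and backward local rules, we have the following theorem'' together with an illustrative figure, and you have spelled out precisely what that sentence means. You correctly identify that Claims~\ref{cl:3.2} and~\ref{cl:3.3} furnish the structural conditions on the border sequence, and that the square-by-square inversion (established in Section~3 via the Hecke insertion interpretation and Theorem~\ref{th:3.4}) globalizes to the bijection; you also rightly flag the exhaustiveness and mutual exclusivity of (B1)--(B16) as the one place requiring careful bookkeeping, which the paper leaves implicit.
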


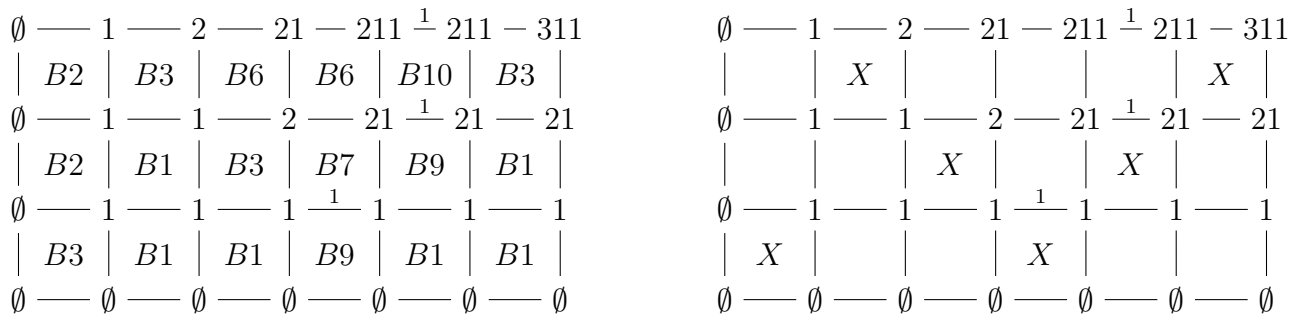
\begin{figure}
		$$
	\begin{tikzpicture}[scale=1.2]
		\node (00) at (0,0) {$\emptyset$};
		\node (10) at (1,0) {$\emptyset$};
		\node (20) at (2,0) {$\emptyset$};
		\node (30) at (3,0) {$\emptyset$};
		\node (40) at (4,0) {$\emptyset$};
		\node (50) at (5,0) {$\emptyset$};
		\node (01) at (0,1) {$\emptyset$};
		\node (11) at (1,1) {$1$};
		\node (21) at (2,1) {$1$};
		\node (31) at (3,1) {$1$};
		\node (41) at (4,1) {$1$};
		\node (51) at (5,1) {$1$};
		\node (02) at (0,2) {$\emptyset$};
		\node (12) at (1,2) {$1$};
		\node (22) at (2,2) {$1$};
		\node (32) at (3,2) {$2$};
		\node (42) at (4,2) {$21$};
		\node (52) at (5,2) {$21$};
		\node (03) at (0,3) {$\emptyset$};
		\node (13) at (1,3) {$1$};
		\node (23) at (2,3) {$2$};
		\node (33) at (3,3) {$21$};
		\node (43) at (4,3) {$211$};
		\node (53) at (5,3) {$211$};
		\node (60) at (6,0) {$\emptyset$};
		\node (61) at (6,1) {$1$};
		\node (62) at (6,2) {$21$};
		\node (63) at (6,3) {$311$};
		\node at (3.5,1.15) {\tiny $1$};
		\node at (4.5,2.15) {\tiny $1$};
		\node at (4.5,3.15) {\tiny $1$};
		\node at (0.5,0.5) {$B3$};
		\node at (0.5,1.5) {$B2$};
		\node at (0.5,2.5) {$B2$};
		\node at (1.5,0.5) {$B1$};
		\node at (1.5,1.5) {$B1$};
		\node at (1.5,2.5) {$B3$};
		\node at (2.5,0.5) {$B1$};
		\node at (2.5,1.5) {$B3$};
		\node at (2.5,2.5) {$B6$};
		\node at (3.5,0.5) {$B9$};
		\node at (3.5,1.5) {$B7$};
		\node at (3.5,2.5) {$B6$};
		\node at (4.5,0.5) {$B1$};
		\node at (4.5,1.5) {$B9$};
		\node at (4.5,2.5) {$B10$};
		\node at (5.5,0.5) {$B1$};
		\node at (5.5,1.5) {$B1$};
		\node at (5.5,2.5) {$B3$};

		\draw (00)--(10)--(20)--(30)--(40)--(50)--(60)
		(01)--(11)--(21)--(31)--(41)--(51)--(61)
		(02)--(12)--(22)--(32)--(42)--(52)--(62)
		(03)--(13)--(23)--(33)--(43)--(53)--(63)
		
		(00)--(01)--(02)--(03)
		(10)--(11)--(12)--(13)
		(20)--(21)--(22)--(23)
		(30)--(31)--(32)--(33)
		(40)--(41)--(42)--(43)
		(50)--(51)--(52)--(53)
		(60)--(61)--(62)--(63);

	\end{tikzpicture}
	\hbox{\hskip1.5cm}
	\begin{tikzpicture}[scale=1.2]
		\node (00) at (0,0) {$\emptyset$};
		\node (10) at (1,0) {$\emptyset$};
		\node (20) at (2,0) {$\emptyset$};
		\node (30) at (3,0) {$\emptyset$};
		\node (40) at (4,0) {$\emptyset$};
		\node (50) at (5,0) {$\emptyset$};
		\node (01) at (0,1) {$\emptyset$};
		\node (.51.5) at (0.5,0.5) {$X$};
		\node (11) at (1,1) {$1$};
		\node (21) at (2,1) {$1$};
		\node (31) at (3,1) {$1$};
		\node (41) at (4,1) {$1$};
		\node (51) at (5,1) {$1$};
		\node (02) at (0,2) {$\emptyset$};
		\node (12) at (1,2) {$1$};
		\node (22) at (2,2) {$1$};
		\node (32) at (3,2) {$2$};
		\node (1.50.5) at (1.5,2.5) {$X$};
		\node (42) at (4,2) {$21$};
		\node (52) at (5,2) {$21$};
		\node (03) at (0,3) {$\emptyset$};
		\node (13) at (1,3) {$1$};
		\node (23) at (2,3) {$2$};
		\node (33) at (3,3) {$21$};
		\node (43) at (4,3) {$211$};
		\node (53) at (5,3) {$211$};
		\node (2.52.5) at (2.5,1.5) {$X$};
		\node (3.52.5) at (3.5,0.5) {$X$};
		\node (4.50.5) at (4.5,1.5) {$X$};
		\node (5,51.5) at (5.5,2.5) {$X$};
		\node (60) at (6,0) {$\emptyset$};
		\node (61) at (6,1) {$1$};
		\node (62) at (6,2) {$21$};
		\node (63) at (6,3) {$311$};
		\node at (3.5,1.15) {\tiny $1$};
		\node at (4.5,2.15) {\tiny $1$};
		\node at (4.5,3.15) {\tiny $1$};

		\draw (00)--(10)--(20)--(30)--(40)--(50)--(60)
		(01)--(11)--(21)--(31)--(41)--(51)--(61)
		(02)--(12)--(22)--(32)--(42)--(52)--(62)
		(03)--(13)--(23)--(33)--(43)--(53)--(63)
		
		(00)--(01)--(02)--(03)
		(10)--(11)--(12)--(13)
		(20)--(21)--(22)--(23)
		(30)--(31)--(32)--(33)
		(40)--(41)--(42)--(43)
		(50)--(51)--(52)--(53)
		(60)--(61)--(62)--(63);

	\end{tikzpicture}
	$$
	\caption{The backward local rules applied at each step to reconstruct the Hecke growth diagram on the left and the resulting filling on the right (we suppress $0$'s for better visibility).}
	\label{fig:12}
\end{figure}

	

\vspace{0.5cm}

\section{Main results}

\subsection{A Jeu de taquin map for straight increasing tableaux}
$\newline$

In this section, we define a {\it jeu de taquin map} for straight increasing tableaux.

{\bf Case 1.} If $T$ is a standard straight increasing tableau, and $T'$ is the resulting tableau obtained from applying the jeu de taquin map to $T$, denoted as $jdt(T)$, then the process to obtain $T'$ is as follows:

\begin{itemize}
	\item [Step 1.] Let $C$ be the top-left box of $T$. Replace the entry $1$ in $C$ with a $\bullet$.
	\item [Step 2.] If the entry $2$ is directly below or to the right of $\bullet$, swap $\bullet$ and $2$. Repeat this process: 
	swap $\bullet$ with $3$, then with $4$, and so on, until the $\bullet$ has been swapped with all the entries in $T$.
	\item [Step 3.] Remove the boxes marked with $\bullet$ and decrease all entries by 1. The resulting tableau is $T'$.
\end{itemize}

Clearly, $T'$ is also a standard straight increasing tableau.

The mapping $jdt(T)=T'$ is not invertible on its own but becomes invertible if we know the shapes of $T$ and $T'$. If we know the shapes of $T$ and $T'$, then we denote this inverse by $T=jdt^{-1}(T')$. Let the shape of $T$ be $\gamma$ and the shape of $T'$ be $\mu$, the steps to reconstruct $T$ from $T'$ are as follows:

\begin{itemize}
	\item [Step 1.] Let $\gamma/\mu$ be the set of boxes. We denote this set of boxes by $C'$ and mark the boxes in $C'$ with $\bullet$.
	\item [Step 2.] Let $m$ be the maximum value of $T'$. If $m$ is directly above or to the left of $\bullet$, swap $\bullet$ and $m$. Repeat this process: swap $\bullet$ with $m-1$, then with $m-2$, and so on, until the $\bullet$ has been swapped with all the entries in $T'$. 	
	\item [Step 3.] Increase all entries by 1 and then replace the $\bullet$ with 1. The resulting tableau is $T$.
\end{itemize}

The example of the jeu de taquin map for standard straight increasing tableau $T$ is shown in Fig\ref{fig:15}, where $T\in$ SIT$^{st}$(4,3,1).

\begin{figure}
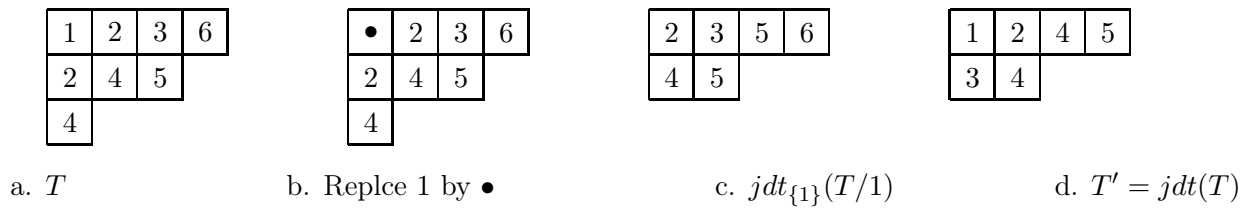

	$$
	\Einheit.2cm
	\PfadDicke{.5pt}
	\Pfad(12,6),222\endPfad
	\Pfad(9,3),222222\endPfad
	\Pfad(6,3),222222\endPfad
	\Pfad(3,0),222222222\endPfad
	\Pfad(0,0),222222222\endPfad
	\Pfad(0,9),111111111111\endPfad
	\Pfad(0,6),111111111111\endPfad
	\Pfad(0,3),111111111\endPfad
	\Pfad(0,0),111\endPfad
	\Label\ro{\text {\small$1$}}(1,7)
	\Label\ro{\text {\small$2$}}(1,4)
	\Label\ro{\text {\small$4$}}(1,1)
	\Label\ro{\text {\small$2$}}(4,7)
	\Label\ro{\text {\small$4$}}(4,4)
	\Label\ro{\text {\small$3$}}(7,7)
	\Label\ro{\text {\small$5$}}(7,4)
	\Label\ro{\text {\small$6$}}(10,7)
	\hbox{\hskip4cm}
	\Einheit.2cm
	\PfadDicke{.5pt}
	\Pfad(12,6),222\endPfad
	\Pfad(9,3),222222\endPfad
	\Pfad(6,3),222222\endPfad
	\Pfad(3,0),222222222\endPfad
	\Pfad(0,0),222222222\endPfad
	\Pfad(0,9),111111111111\endPfad
	\Pfad(0,6),111111111111\endPfad
	\Pfad(0,3),111111111\endPfad
	\Pfad(0,0),111\endPfad
	\Label\ro{\text {\small$\bullet$}}(1,7)
	\Label\ro{\text {\small$2$}}(1,4)
	\Label\ro{\text {\small$4$}}(1,1)
	\Label\ro{\text {\small$2$}}(4,7)
	\Label\ro{\text {\small$4$}}(4,4)
	\Label\ro{\text {\small$3$}}(7,7)
	\Label\ro{\text {\small$5$}}(7,4)
	\Label\ro{\text {\small$6$}}(10,7)
	\hbox{\hskip4cm}
	\Einheit.2cm
	\PfadDicke{.5pt}
	\Pfad(12,6),222\endPfad
	\Pfad(9,6),222\endPfad
	\Pfad(6,3),222222\endPfad
	\Pfad(3,3),222222\endPfad
	\Pfad(0,3),222222\endPfad
	\Pfad(0,9),111111111111\endPfad
	\Pfad(0,6),111111111111\endPfad
	\Pfad(0,3),111111\endPfad
	\Label\ro{\text {\small$2$}}(1,7)
	\Label\ro{\text {\small$4$}}(1,4)
	\Label\ro{\text {\small$3$}}(4,7)
	\Label\ro{\text {\small$5$}}(4,4)
	\Label\ro{\text {\small$5$}}(7,7)
	\Label\ro{\text {\small$6$}}(10,7)
	\hbox{\hskip4cm}
	\Einheit.2cm
	\PfadDicke{.5pt}
	\Pfad(12,6),222\endPfad
	\Pfad(9,6),222\endPfad
	\Pfad(6,3),222222\endPfad
	\Pfad(3,3),222222\endPfad
	\Pfad(0,3),222222\endPfad
	\Pfad(0,9),111111111111\endPfad
	\Pfad(0,6),111111111111\endPfad
	\Pfad(0,3),111111\endPfad
	\Label\ro{\text {\small$1$}}(1,7)
	\Label\ro{\text {\small$3$}}(1,4)
	\Label\ro{\text {\small$2$}}(4,7)
	\Label\ro{\text {\small$4$}}(4,4)
	\Label\ro{\text {\small$4$}}(7,7)
	\Label\ro{\text {\small$5$}}(10,7)
	\hskip3cm
	$$
	\centerline{\small 
		\hskip3cm
		a. $T$
		\hskip2.8cm
		b. Replce $1$ by $\bullet$
		\hskip2.8cm
		c. $jdt_{\{1\}}(T/1)$
		\hskip2cm
		d. $T^\prime=jdt(T)$
		\hskip2.5cm}
	\caption{An example from $T$ to $T^\prime$.}
	\label{fig:15}
\end{figure}

{\bf Case 2.} If $T$ is a straight increasing tableau but not a standard straight increasing tableau, this leads two subcases:

\begin{itemize}
	\item [(C1)] If the top-left box of $T$ has value $k$, where $k>1$.
	\item [(C2)] If the top left box of $T$ has value $1$, but the set of entries does not constitute an integer interval with left endpoint $1$.
\end{itemize}

For (C1), we define the rules of the jeu de taquin map for $T$ as follows: decrease all entries of $T$ by 1, resulting in the tableau $T'$. Thus, $T$ and $T'$ have the same shape. The mapping is clearly invertible, and $T$ can be reconstructed by increasing all entries of $T'$ by 1. An example is shown in Fig.\ref{fig:16}a,b.

For (C2), we use the same rules of the jeu de taquin map as those for standard straight increasing tableaux. An example is shown in Fig.\ref{fig:16}c,d.

\begin{figure}
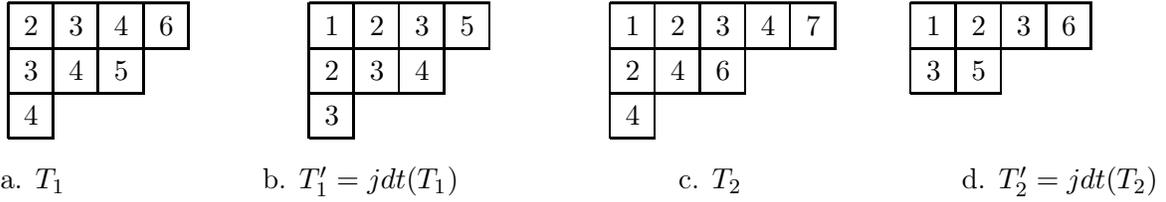

	$$
	\Einheit.2cm
	\PfadDicke{.5pt}
	\Pfad(12,6),222\endPfad
	\Pfad(9,3),222222\endPfad
	\Pfad(6,3),222222\endPfad
	\Pfad(3,0),222222222\endPfad
	\Pfad(0,0),222222222\endPfad
	\Pfad(0,9),111111111111\endPfad
	\Pfad(0,6),111111111111\endPfad
	\Pfad(0,3),111111111\endPfad
	\Pfad(0,0),111\endPfad
	\Label\ro{\text {\small$2$}}(1,7)
	\Label\ro{\text {\small$3$}}(1,4)
	\Label\ro{\text {\small$4$}}(1,1)
	\Label\ro{\text {\small$3$}}(4,7)
	\Label\ro{\text {\small$4$}}(4,4)
	\Label\ro{\text {\small$4$}}(7,7)
	\Label\ro{\text {\small$5$}}(7,4)
	\Label\ro{\text {\small$6$}}(10,7)
	\hbox{\hskip4cm}
	\Einheit.2cm
	\PfadDicke{.5pt}
	\Pfad(12,6),222\endPfad
	\Pfad(9,3),222222\endPfad
	\Pfad(6,3),222222\endPfad
	\Pfad(3,0),222222222\endPfad
	\Pfad(0,0),222222222\endPfad
	\Pfad(0,9),111111111111\endPfad
	\Pfad(0,6),111111111111\endPfad
	\Pfad(0,3),111111111\endPfad
	\Pfad(0,0),111\endPfad
	\Label\ro{\text {\small$1$}}(1,7)
	\Label\ro{\text {\small$2$}}(1,4)
	\Label\ro{\text {\small$3$}}(1,1)
	\Label\ro{\text {\small$2$}}(4,7)
	\Label\ro{\text {\small$3$}}(4,4)
	\Label\ro{\text {\small$3$}}(7,7)
	\Label\ro{\text {\small$4$}}(7,4)
	\Label\ro{\text {\small$5$}}(10,7)
	\hbox{\hskip4cm}
	\Einheit.2cm
	\PfadDicke{.5pt}
	\Pfad(15,6),222\endPfad
	\Pfad(12,6),222\endPfad
	\Pfad(9,3),222222\endPfad
	\Pfad(6,3),222222\endPfad
	\Pfad(3,0),222222222\endPfad
	\Pfad(0,0),222222222\endPfad
	\Pfad(0,9),111111111111111\endPfad
	\Pfad(0,6),111111111111111\endPfad
	\Pfad(0,3),111111111\endPfad
	\Pfad(0,0),111\endPfad
	\Label\ro{\text {\small$1$}}(1,7)
	\Label\ro{\text {\small$2$}}(1,4)
	\Label\ro{\text {\small$4$}}(1,1)
	\Label\ro{\text {\small$2$}}(4,7)
	\Label\ro{\text {\small$4$}}(4,4)
	\Label\ro{\text {\small$3$}}(7,7)
	\Label\ro{\text {\small$6$}}(7,4)
	\Label\ro{\text {\small$4$}}(10,7)
	\Label\ro{\text {\small$7$}}(13,7)
	\hbox{\hskip4cm}
	\Einheit.2cm
	\PfadDicke{.5pt}
	\Pfad(12,6),222\endPfad
	\Pfad(9,6),222\endPfad
	\Pfad(6,3),222222\endPfad
	\Pfad(3,3),222222\endPfad
	\Pfad(0,3),222222\endPfad
	\Pfad(0,9),111111111111\endPfad
	\Pfad(0,6),111111111111\endPfad
	\Pfad(0,3),111111\endPfad
	\Label\ro{\text {\small$1$}}(1,7)
	\Label\ro{\text {\small$3$}}(1,4)
	\Label\ro{\text {\small$2$}}(4,7)
	\Label\ro{\text {\small$5$}}(4,4)
	\Label\ro{\text {\small$3$}}(7,7)
	\Label\ro{\text {\small$6$}}(10,7)
	\hskip3cm
	$$
	\centerline{\small 
		\hskip2.8cm
		a. $T_1$
		\hskip2.5cm
		b. $T'_1=jdt(T_1)$
		\hskip2.8cm
		c. $T_2$
		\hskip2.8cm
		d. $T'_2=jdt(T_2)$
		\hskip2.5cm}
	\caption{Two examples of jeu de taquin for straight increasing tableaux.}
	\label{fig:16}
\end{figure}

Based on the above discussion, we have the following theorem.

\begin{theorem}
	Let $T$ and $T^{\prime}$ be straight increasing tableaux such that $jdt(T)=T^{\prime}$. Then the mapping is invertible if we know the shapes of $T$ and $T'$.
	\label{Th4.1}
\end{theorem}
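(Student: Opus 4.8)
The plan is to reduce the claim to the invertibility of a single jeu de taquin slide for increasing tableaux, after first reading off from the shapes which branch of the definition of $jdt$ was applied. First I would record the structural dichotomy governing the map: the top-left entry of $T$ is either $1$ or strictly larger. If it is larger than $1$ (subcase (C1)), the map is a global shift, so $\mu=\gamma$ and no box is lost. If it equals $1$ (the standard case of Case 1 and subcase (C2)), the map empties the box $(1,1)$, slides this single hole to an outer corner by the forward rule of Section 2.3, deletes that corner box, and decrements every entry; hence $\mu\subseteq\gamma$ with $|\gamma/\mu|=1$. Since $\mu=\gamma$ and $|\gamma/\mu|=1$ are mutually exclusive, the ordered pair of shapes $(\gamma,\mu)$ determines unambiguously which branch produced $T'$.

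With the branch identified, I would invert each separately. When $\gamma=\mu$, the map was the global shift of (C1), and increasing every entry of $T'$ by $1$ returns $T$; one checks that the reconstructed tableau indeed has top-left entry $>1$, so no inconsistency with the other branch can arise. When $|\gamma/\mu|=1$, mark the unique box of $\gamma/\mu$ with $\bullet$, run the reverse jeu de taquin of Section 2.3 (processing the entries from largest to smallest), then increase all entries by $1$ and place $1$ in the box now carrying $\bullet$; this is exactly the inverse recipe stated before Figure \ref{fig:15}.

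The correctness of the slide branch rests on two facts I would make explicit. First, the forward and reverse jeu de taquin operations of Section 2.3 are mutually inverse for increasing tableaux once the set of sliding corners is recorded; this is part of the Thomas--Yong theory \cite{TY09} and is what underlies Theorem \ref{pro2}. Marking $\gamma/\mu$ supplies precisely the corner at which the forward slide terminated, so the reverse slide retraces the forward one and returns the hole to $(1,1)$. Second, a single global decrement is harmless during the slide, because each swap in jeu de taquin is decided solely by the relative order of adjacent entries and the map $x\mapsto x-1$ is order-preserving; thus reverse-sliding the decremented tableau $T'$ makes the same sequence of swaps it would make before decrementing, and the final re-increment together with restoring $1$ at $(1,1)$ recovers $T$ exactly.

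The main obstacle is the first of these facts in the increasing-tableau (K-theoretic) setting: unlike for standard tableaux, the hole may have two equal entries adjacent to it, and one must be sure the reverse slide undoes rather than merely approximately undoes the forward slide. I would handle this by invoking the jeu de taquin theory for increasing tableaux recalled in Section 2.3 and proved in \cite{TY09,BuchSamuel}, noting that a single hole started at $(1,1)$ (the unique box that can hold the global minimum of an increasing tableau) behaves as a genuine, reversible slide; the order-preservation remark then disposes of the decrement, completing the argument.
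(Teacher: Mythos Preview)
Your reduction to a branch detection followed by an explicit inverse is the same architecture as the paper's discussion, but there is a real error in your treatment of the slide branch: you assert that when the top-left entry of $T$ is $1$, the map ``slides this single hole to an outer corner'' and hence $|\gamma/\mu|=1$. In the $K$-theoretic jeu de taquin of Section~2.3 a single $\bullet$ can split into several $\bullet$'s when both the box below and the box to the right carry the same value; after Step~2 of the definition of $jdt$ there may be many boxes marked $\bullet$, and Step~3 removes all of them. The paper's own example in Figure~\ref{fig:15} already shows this: $T$ has shape $\gamma=(4,3,1)$ and $T'=jdt(T)$ has shape $\mu=(4,2)$, so $|\gamma/\mu|=2$. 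Consequently your dichotomy ``$\mu=\gamma$ versus $|\gamma/\mu|=1$'' is not exhaustive, and your inverse recipe ``mark the unique box of $\gamma/\mu$ \dots\ place $1$ in the box now carrying $\bullet$'' is not well-defined in general.

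The repair is minor and brings you back to exactly the paper's argument: the correct dichotomy is $\mu=\gamma$ (subcase (C1)) versus $\mu\subsetneq\gamma$ (Case~1 and (C2)), and in the latter branch one marks \emph{every} box of the set $C'=\gamma/\mu$ with $\bullet$ before running the reverse slide, as in the paper's Steps~1--3 preceding Figure~\ref{fig:15}. Your remarks about order-preservation under the global decrement remain valid, and the invertibility of the Thomas--Yong slide with a recorded set of outer corners (not a single corner) is what finishes the proof.
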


\subsection{Hecke growth diagrams for $01$-fillings of stack polyominoes}

$\newline$

Let $\mathcal{M}$ be a stack polyomino with left-justified rows (cf. Fig.\ref{fig:1}b), and let $\mathcal{B}$ denote its right/down boundary. We define a $turn$ to be the top-left corner of $\mathcal{B}$.
For example, in Fig.\ref{fig:18} (where the labellings of the corners of its rigth/up boundary should be ignored at this point), there are three turns in this stack polyomino, marked with $\bullet$, and exactly three turns.

\begin{figure}
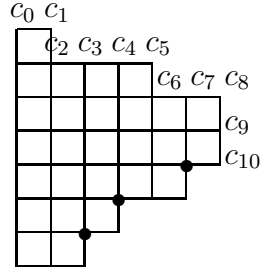

	$$
	\Einheit.15cm
	\PfadDicke{.5pt}
	\Pfad(0,0),111111\endPfad
	\Pfad(0,3),111111111\endPfad
	\Pfad(0,6),111111111111111\endPfad
	\Pfad(0,9),111111111111111111\endPfad
	\Pfad(0,12),111111111111111111\endPfad
	\Pfad(0,15),111111111111111111\endPfad
	\Pfad(0,18),111111111111\endPfad
	\Pfad(0,21),111\endPfad
	\Pfad(0,0),222222222222222222222\endPfad
	\Pfad(3,0),222222222222222222222\endPfad
	\Pfad(6,0),222222222222222222\endPfad
	\Pfad(9,3),222222222222222\endPfad
	\Pfad(12,6),222222222222\endPfad
	\Pfad(15,6),222222222\endPfad
	\Pfad(18,9),222222\endPfad
	\Label\ro{\text {$\bullet$}}(5.5,2.3)
	\Label\ro{\text {$\bullet$}}(8.5,5.3)
	\Label\ro{\text {$\bullet$}}(14.5,8.3)
	\Label\ro{\text {$c_0$}}(0,22)
	\Label\ro{\text {$c_1$}}(3,22)
	\Label\ro{\text {$c_2$}}(3,19)
	\Label\ro{\text {$c_3$}}(6,19)
	\Label\ro{\text {$c_4$}}(9,19)
	\Label\ro{\text {$c_5$}}(12,19)
	\Label\ro{\text {$c_6$}}(13,16)
	\Label\ro{\text {$c_7$}}(16,16)
	\Label\ro{\text {$c_8$}}(19,16)
	\Label\ro{\text {$c_9$}}(19,12)
    \Label\ro{\text {$c_{10}$}}(19.5,9)	
	\hskip2cm
	$$
	\caption{Some turns of a stack polyomino.}
	\label{fig:18}
\end{figure}

Given a $01$-filling $\pi$ of $\mathcal{M}$ such that there is at most one $1$ in each column, we obtain the {\it Hecke growth diagram} for $\pi$ of $\mathcal{M}$ by the following induction. We denote this diagram as $\mathcal{M}$$(\pi)$.

First, For the first column (from the left), we treat it as a rectangle and derive the sequence of partitions along the right border using {\it forward local rules} of Section 2.2. This sequence corresponds to a straight increasing tableau by Theorem \ref{PQ-w}.

Next, suppose we have obtained the sequence of partitions along the right border of some column $\mathcal{C}$. Let the right neighbor of $\mathcal{C}$ be column $\mathcal{L}$. Let the sequence of partitions along the right border of $\mathcal{C}$, which ranges from the bottom-right of $\mathcal{C}$ to the top-left of $\mathcal{L}$, correspond to a straight increasing tableau $P$ with shape $\lambda$.
We can then construct the sequence of partitions along the right border of $\mathcal{L}$ and the labels on the left edges of these partitions as follows:

\begin{itemize}
	\item if $\mathcal{L}$ is bottom-justified with $\mathcal{C}$, we can continue to construct the sequence of partitions along the right border of $\mathcal{L}$ and the labels on the left edges of these partitions by using the {\it forward local rules}. The top-left corner of $\mathcal{L}$ is labeled with partition $\lambda$. 
	An illustration can be seen in the left diagram of Fig.\ref{fig:induction}. 
	
	\item if $\mathcal{L}$ is not bottom-justified with $\mathcal{C}$, we perform the jeu de taquin map on $P$ as many times as the number of rows below this turn, resulting in a new straight increasing tableau $P^{\prime}$. Since a straight increasing tableau corresponds one by one to a sequence of partitions, the sequence of partitions corresponding to $P^{\prime}$ is then taken as the sequence of partitions along the left border of $\mathcal{L}$. We can then continue to construct the sequence of partitions along the right border of $\mathcal{L}$ and the labels by using forward local rules. The top-left corner of $\mathcal{L}$ is labeled with a tuple of partitions $(\lambda_0,\lambda_1,\dots,\lambda_l)$, where $l$ is the number of rows below this turn and $\lambda_l=\lambda$, $\lambda_{l-1}$ is the shape of $jdt(P)$, $\lambda_{l-2}$ is the shape of $jdt(jdt(P))$, and $\lambda_{l-i}$ is the shape of $\underbrace{jdt(\cdots jdt}_{i}(P))$, for $i\leq l$. An illustration can be seen in the right diagram of Fig.\ref{fig:induction}. 
\end{itemize}

Finally, we repeat the above method until we obtain the sequence of partitions along the right border of the last column of $\mathcal{M}$. It is easy to check that the sequence along the top-right border of $\mathcal{M}$ satisfies the conditions of the theorem.

\begin{figure}[h]
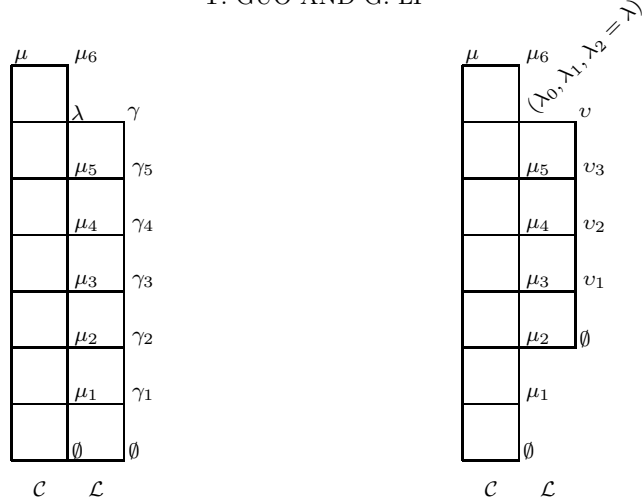

	$$
	\Einheit.25cm
	\PfadDicke{.5pt}
	\Pfad(0,0),111111\endPfad
	\Pfad(0,3),111111\endPfad
	\Pfad(0,6),111111\endPfad
	\Pfad(0,9),111111\endPfad
	\Pfad(0,12),111111\endPfad
	\Pfad(0,15),111111\endPfad
	\Pfad(0,18),111111\endPfad
	\Pfad(0,21),111\endPfad
	\Pfad(0,0),222222222222222222222\endPfad
	\Pfad(3,0),222222222222222222222\endPfad
	\Pfad(6,0),222222222222222222\endPfad
	\Label\ro{\text {\tiny $\emptyset$}}(3,0)
	\Label\ro{\text {\tiny $\mu_1$}}(3.5,3)
	\Label\ro{\text {\tiny $\mu_2$}}(3.5,6)
	\Label\ro{\text {\tiny $\mu_3$}}(3.5,9)
	\Label\ro{\text {\tiny $\mu_4$}}(3.5,12)
	\Label\ro{\text {\tiny $\mu_5$}}(3.5,15)
	\Label\ro{\text {\tiny $\lambda$}}(3,18)
	\Label\ro{\text {\tiny $\mu_6$}}(3.5,21)
	\Label\ro{\text {\tiny $\mu$}}(0,21)
	\Label\ro{\text {\tiny $\mathcal{C}$}}(1,-2)
	\Label\ro{\text {\tiny $\mathcal{L}$}}(4,-2)
	\Label\ro{\text {\tiny $\emptyset$}}(6,0)
	\Label\ro{\text {\tiny $\gamma_1$}}(6.5,3)
	\Label\ro{\text {\tiny $\gamma_2$}}(6.5,6)
	\Label\ro{\text {\tiny $\gamma_3$}}(6.5,9)
	\Label\ro{\text {\tiny $\gamma_4$}}(6.5,12)
	\Label\ro{\text {\tiny $\gamma_5$}}(6.5,15)
	\Label\ro{\text {\tiny $\gamma$}}(6,18)
	\hbox{\hskip6cm}
	\Einheit.25cm
	\PfadDicke{.5pt}
	\Pfad(0,0),111\endPfad
	\Pfad(0,3),111\endPfad
	\Pfad(0,6),111111\endPfad
	\Pfad(0,9),111111\endPfad
	\Pfad(0,12),111111\endPfad
	\Pfad(0,15),111111\endPfad
	\Pfad(0,18),111111\endPfad
	\Pfad(0,21),111\endPfad
	\Pfad(0,0),222222222222222222222\endPfad
	\Pfad(3,0),222222222222222222222\endPfad
	\Pfad(6,6),222222222222\endPfad
	\Label\ro{\text {\tiny $\emptyset$}}(3,0)
	\Label\ro{\text {\tiny $\mu_1$}}(3.5,3)
	\Label\ro{\text {\tiny $\mu_2$}}(3.5,6)
	\Label\ro{\text {\tiny $\mu_3$}}(3.5,9)
	\Label\ro{\text {\tiny $\mu_4$}}(3.5,12)
	\Label\ro{\text {\tiny $\mu_5$}}(3.5,15)
	\Label\ro{\text {\tiny $\mu$}}(0,21)
	\Label\ro{\slanttext {\tiny$(\lambda_0,\lambda_1,\lambda_2=\lambda)$}}(6,21)
	\Label\ro{\text {\tiny $\mu_6$}}(3.5,21)
	\Label\ro{\text {\tiny $\mathcal{C}$}}(1,-2)
	\Label\ro{\text {\tiny $\mathcal{L}$}}(4,-2)
	\Label\ro{\text {\tiny $\emptyset$}}(6,6)
	\Label\ro{\text {\tiny $\upsilon_1$}}(6.5,9)
	\Label\ro{\text {\tiny $\upsilon_2$}}(6.5,12)
	\Label\ro{\text {\tiny $\upsilon_3$}}(6.5,15)
	\Label\ro{\text {\tiny $\upsilon$}}(6,18)
	\hskip2cm
	$$
	\caption{An illustration for the Hecke growth diagram of stack polyominoes.}
	\label{fig:induction}
\end{figure}

Note that, if the top-right corner of $\mathcal{L}$ is labeled $(\upsilon_{0},\upsilon_{1},\dots,\upsilon_{m})$, the top-left corner of $\mathcal{L}$ is labeled $(\lambda_{0},\lambda_{1},\dots,\lambda_{l})$, and $\upsilon_{m}=\lambda_{0}$, then there exists a positive integer $r$ to record the row where the box terminates. We place $r$ at the top-right corner of $\upsilon_{m}$, denoted as $\upsilon_{m}^{r}$. Note that $r$ must not exceed the number of rows of $\upsilon_{m}$. 
In Fig.\ref{Hecke growth diagram}, this is a Hecke growth diagram for a $01$-filling of a stack polyomino, and the filling satisfies at most one $1$ in each column.

\subsection{The first main theorem}
$\newline$

Let $\mathcal{M}$ be a stack polyomino, we denote the right/up boundary of $\mathcal{M}$ by a sequence of corners $(c_{0},c_{1},\dots,c_{n})$, read from the left-top corner to the right-bottom corner. See
Fig.\ref{fig:18} for an example, where the maximally northeast border is $(c_{0},c_{1},\dots,c_{10})$.
The first main theorem of this paper is as follows.

\begin{theorem}
	Let $\mathcal{M}$ be a stack polyomino with the right/up boundary $(c_{0},c_{1},\dots,c_{n})$. The $01$-fillings of $\mathcal{M}$ with the property that each column contains at most one $1$ are in bijection with the sequence $(\emptyset=l_0,l_1,\dots,l_{n}=\emptyset)$ that satisfy the following conditions:
	\begin{itemize}
		\item  For $i=0,\dots,n$, $l_i$ is a label of $c_i$, and $l_i$ is either a partition or a sequence of partitions.
		
		\item  A corner is labeled by a sequence of partitions if and only if there is a turn below it and a corner directly to its right. Otherwise, the label of this corner is a partition.
		
		\item  A corner labeled by a sequence of partitions $(\lambda_{0},\lambda_{1},\dots,\lambda_{k})$ has the property that $\lambda_i/\lambda_{i-1}$ are the out corners of $\lambda_{i-1}$ for $1\leq i\leq k$.
		
		\item For a corner labeled by a sequence of partitions $(\lambda_{0},\lambda_{1},\dots,\lambda_{k})$ and its right neighbor $(\upsilon_{0},\upsilon_{1},\dots,\upsilon_{m})$, $\lambda_{0}$ and $\upsilon_{m}$ differ by at most one box. If $\lambda_{0}=\upsilon_{m}$, 
		then there probably exists a positive integer $r$ which does not exceed the number of rows of $\upsilon_{m}$, that is attached to $\upsilon_{m}$, we denote this as $\upsilon_{m}^r$.

		\item For the corner labeled by $(\mu_{0},\mu_{1},\dots,\mu_{j})$  and its top neighbor labeled by $\gamma$, $\gamma/\mu_{j}$ is a rook strip.
	\end{itemize}
	\label{Th4.3}
\end{theorem}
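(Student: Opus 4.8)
The plan is to prove the bijection by induction on the columns of $\mathcal{M}$, using Theorem \ref{Th3.4} as the local building block inside each maximal bottom-justified block of columns and Theorem \ref{Th4.1} to control the passage across turns. The forward map has already been constructed in Section 4.2, so what remains is to verify that its output always satisfies the five listed conditions, to describe the inverse map, and to check that the two maps are mutually inverse. I would induct either on the number of columns or, equivalently, on the number of turns, with the first bottom-justified block handled directly by the rectangular Hecke growth diagram of Theorem \ref{Th3.4}.

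For the forward direction I would argue column by column. Within a block of columns that are bottom-justified with one another, the diagram is literally a rectangular Hecke growth diagram, so Claim \ref{cl:3.2} gives the "differ by at most one box" condition along each horizontal step and Claim \ref{cl:3.3} gives the rook-strip condition along each vertical step; these yield the fourth and fifth bullets (together with the optional edge label $r$, which is exactly the row datum produced by the forward local rules at the first square of the new column). The only new phenomenon is at a turn: when the right neighbor $\mathcal{L}$ is not bottom-justified with $\mathcal{C}$, the top-left corner of $\mathcal{L}$ is labeled by the tuple $(\lambda_0,\dots,\lambda_l)$ recording the shapes of the successive images of $P$ under the jeu de taquin map, where $l$ is the number of rows below the turn. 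Since each application of the map removes exactly one box and the resulting shapes nest, $\lambda_i/\lambda_{i-1}$ is a single outer corner of $\lambda_{i-1}$, which is the third bullet; the second bullet (a corner carries a tuple precisely when it has a turn below it and a corner to its right) is immediate from the construction.

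For the backward direction I would reverse the induction, stripping columns from right to left starting from the right border of the last column. Within a bottom-justified block this is exactly the inverse of the rectangular bijection, supplied by the backward local rules (B1)--(B16) of Section 3, which reconstruct the filling of each square together with the left-border partitions and edge labels. To cross a turn from $\mathcal{L}$ back to $\mathcal{C}$, I first recover the left-border sequence of $\mathcal{L}$ from its top-left tuple $(\lambda_0,\dots,\lambda_l)$ and the backward local rules, obtaining a straight increasing tableau of shape $\lambda_0$, and then apply the inverse jeu de taquin map $l$ times. Here the tuple is essential: by Theorem \ref{Th4.1} the map $jdt$ is invertible only once both the source and target shapes are known, and the consecutive entries $\lambda_{i-1},\lambda_i$ of the tuple supply exactly the pair of shapes needed for each inverse step. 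Applying $jdt^{-1}$ successively therefore reconstructs the tableau $P$ along the right border of $\mathcal{C}$, and the induction proceeds.

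The main obstacle I anticipate is the coherence at turns: I must show that the forward step (apply $jdt$, then forward local rules) and the backward step (backward local rules, then $jdt^{-1}$) are genuine inverses, with no information lost in passing between the single-partition regime and the tuple regime. The delicate point is that the tuple label is forced to record \emph{every} intermediate shape and not merely the endpoints $\lambda_0$ and $\lambda_l$; the third bullet guarantees that the tuple is a legal sequence of single-box additions, and Theorem \ref{Th4.1} then makes each inverse jeu de taquin step uniquely determined. Once this is established, the mutual invertibility of the rectangular forward and backward rules (Theorem \ref{Th3.4}) and of $jdt$ and $jdt^{-1}$ (Theorem \ref{Th4.1}) combine, through the induction, to show that the global forward and backward maps are mutually inverse, which completes the proof.
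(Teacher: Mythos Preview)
Your proposal is correct and follows essentially the same approach as the paper: the forward map is the Hecke growth diagram construction of Section~4.2, the backward map strips columns right to left using the backward local rules (B1)--(B16) inside each bottom-justified block and the inverse jeu de taquin map of Theorem~\ref{Th4.1} to cross turns (with the tuple $(\lambda_0,\dots,\lambda_l)$ supplying the successive target shapes), and mutual invertibility is inherited from Theorem~\ref{Th3.4} and Theorem~\ref{Th4.1}. One small inaccuracy to fix: a single application of the jeu de taquin map need not remove exactly one box---the $\bullet$ can split during the $K$-jeu de taquin swaps (and in Case~(C1) no box is removed at all)---so $\lambda_i/\lambda_{i-1}$ is in general a (possibly empty) set of outer corners of $\lambda_{i-1}$, exactly as stated in the third bullet of the theorem; your argument goes through unchanged once you replace ``single outer corner'' by ``set of outer corners''.
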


\begin{proof}

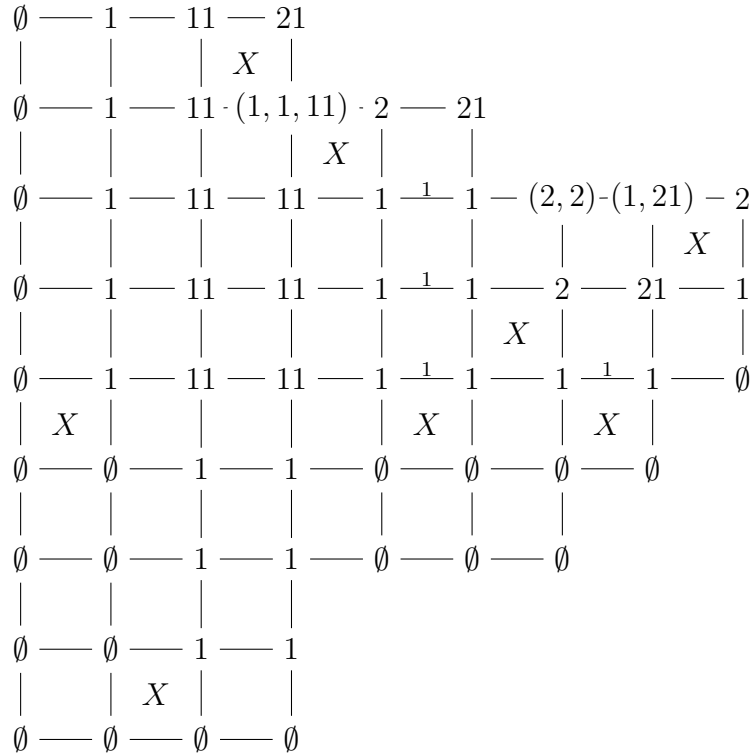
\begin{figure}[b]
	$$
	\begin{tikzpicture}[scale=1.2]
		\node (00) at (0,0) {$\emptyset$};
		\node (10) at (1,0) {$\emptyset$};
		\node (20) at (2,0) {$\emptyset$};
		\node (30) at (3,0) {$\emptyset$};
		\node (01) at (0,1) {$\emptyset$};
		\node (11) at (1,1) {$\emptyset$};
		\node (21) at (2,1) {$1$};
		\node (31) at (3,1) {$1$};
		\node (02) at (0,2) {$\emptyset$};
		\node (12) at (1,2) {$\emptyset$};
		\node (22) at (2,2) {$1$};
		\node (32) at (3,2) {$1$};
		\node (42) at (4,2) {$\emptyset$};
		\node (52) at (5,2) {$\emptyset$};
		\node (62) at (6,2) {$\emptyset$};
		\node (03) at (0,3) {$\emptyset$};
		\node (13) at (1,3) {$\emptyset$};
		\node (23) at (2,3) {$1$};
		\node (33) at (3,3) {$1$};
		\node (43) at (4,3) {$\emptyset$};
		\node (53) at (5,3) {$\emptyset$};
		\node (63) at (6,3) {$\emptyset$};
		\node (73) at (7,3) {$\emptyset$};
		\node (04) at (0,4) {$\emptyset$};
		\node (14) at (1,4) {$1$};
		\node (24) at (2,4) {$11$};
		\node (34) at (3,4) {$11$};
		\node (44) at (4,4) {$1$};
		\node (54) at (5,4) {$1$};
		\node (64) at (6,4) {$1$};
		\node (74) at (7,4) {$1$};
		\node (84) at (8,4) {$\emptyset$};
		
		\node (05) at (0,5) {$\emptyset$};
		\node (15) at (1,5) {$1$};
		\node (25) at (2,5) {$11$};
		\node (35) at (3,5) {$11$};
		\node (45) at (4,5) {$1$};
		\node (55) at (5,5) {$1$};
		\node (65) at (6,5) {$2$};
		\node (75) at (7,5) {$21$};
		\node (85) at (8,5) {$1$};
		
		\node (06) at (0,6) {$\emptyset$};
		\node (16) at (1,6) {$1$};
		\node (26) at (2,6) {$11$};
		\node (36) at (3,6) {$11$};
		\node (46) at (4,6) {$1$};
		\node (56) at (5,6) {$1$};
		\node (66) at (6,6) {$(2,2)$};
		\node (76) at (7,6) {$(1,21)$};
		\node (86) at (8,6) {$2$};
		
		\node (07) at (0,7) {$\emptyset$};
		\node (17) at (1,7) {$1$};
		\node (27) at (2,7) {$11$};
		\node (37) at (3,7) {$(1,1,11)$};
		\node (47) at (4,7) {$2$};
		\node (57) at (5,7) {$21$};
		
		\node (08) at (0,8) {$\emptyset$};
		\node (18) at (1,8) {$1$};
		\node (28) at (2,8) {$11$};
		\node (38) at (3,8) {$21$};
		
		\node at (0.5,3.5) {$X$};
		\node at (1.5,0.5) {$X$};
		\node at (2.5,7.5) {$X$};
		\node at (3.5,6.5) {$X$};
		\node at (4.5,3.5) {$X$};
		\node at (5.5,4.5) {$X$};
		\node at (6.5,3.5) {$X$};
		\node at (7.5,5.5) {$X$};
		
		\node at (4.5,4.1) {\tiny $1$};
		\node at (4.5,5.1) {\tiny $1$};
		\node at (4.5,6.1) {\tiny $1$};
		\node at (6.5,4.1) {\tiny $1$};

		\draw (00)--(10)--(20)--(30)
		(01)--(11)--(21)--(31)
		(02)--(12)--(22)--(32)--(42)--(52)--(62)
		(03)--(13)--(23)--(33)--(43)--(53)--(63)--(73)
		(04)--(14)--(24)--(34)--(44)--(54)--(64)--(74)--(84)
		(05)--(15)--(25)--(35)--(45)--(55)--(65)--(75)--(85)
		(06)--(16)--(26)--(36)--(46)--(56)--(66)--(76)--(86)
		(07)--(17)--(27)--(37)--(47)--(57)
		(08)--(18)--(28)--(38)
		
		(00)--(01)--(02)--(03)--(04)--(05)--(06)--(07)--(08)
		(10)--(11)--(12)--(13)--(14)--(15)--(16)--(17)--(18)
		(20)--(21)--(22)--(23)--(24)--(25)--(26)--(27)--(28)
		(30)--(31)--(32)--(33)--(34)--(35)--(36)--(37)--(38)
		(42)--(43)--(44)--(45)--(46)--(47)
		(52)--(53)--(54)--(55)--(56)--(57)
		(62)--(63)--(64)--(65)--(66)
		(73)--(74)--(75)--(76)
		(84)--(85)--(86);

	\end{tikzpicture}
	$$
	\caption{An example for the Hecke growth diagram of a stack polyomino.}
	\label{Hecke growth diagram}
\end{figure}

Let $\pi$ be a $01$-filling of $\mathcal{M}$ with at most one $1$ in each column, we can obtain the Hecke growth diagram of 
$\mathcal{M}(\pi)$, and it is easy to check that the sequence along the top-right border of $\mathcal{M}$ satisfies the conditions of the theorem.

    Now, given the sequence $(\emptyset=l_0,l_1,\dots,l_{n}=\emptyset)$ along the top-right border of $\mathcal{M}$ that satisfies the conditions in the theorem. Suppose we have obtained the sequence of partitions along the left border of some column $\mathcal{L}$ and the filling of $\mathcal{L}$, 
	Our goal is to reconstruct the filling and the sequence of partitions along the left border of its adjacent left column $\mathcal{C}$. We will discuss this in two cases.
	
	\begin{itemize}
		\item If these two columns are bottom-justified, as illustrated in the left diagram of Fig.\ref{fig:Th4.3}, and we have obtained the sequence of partitions $(\emptyset,\mu_1,\dots,\mu_k,\lambda)$ along the left border of column $\mathcal{L}$, we can use the backward local rules of Section 3 to reconstruct the filling and the sequence of partitions along the left border of $\mathcal{C}$ from $(\emptyset,\mu_1,\dots,\mu_k,\lambda,\dots,\mu)$ and $\rho$.
		
		\item If these two columns are not bottom-justified, as shown in the right diagram of Fig.\ref{fig:Th4.3}, and we have obtained the sequence of partitions $(\emptyset,\gamma_1,\dots,\gamma_l,\lambda_0)$ along the left border of column $\mathcal{L}$, this sequence corresponds to a straight increasing tableau $T_0$ with shape $\lambda_0$. Based on the sequence of partitions $(\lambda_0,\lambda_1,\dots,\lambda_m=\lambda)$, we perform the inverse jeu de taquin map on $T_0$ for $m$ times to obtain a straight increasing tableau $T_m$, which corresponds to the sequence of partitions $(\emptyset,\dots,\mu_m,\dots,\mu_k,\lambda)$. Using the backward local rules, we can then reconstruct the filling and the the sequence of partitions along the left border of $\mathcal{C}$ from $(\emptyset,\mu_1,\dots,\mu_k,\lambda,\dots,\mu)$ and $\rho$.
	\end{itemize}
	
	We repeat the above method until we complete the first column (from the left) of $\mathcal{M}$. Let $\omega$ be the filling we have obtained by this process. It is easy to check that the sequence along the top-right border of $\mathcal{M}(\omega)$ is equal to the sequence $(\emptyset=l_0,l_1,\dots,l_{n}=\emptyset)$.

	\begin{figure}
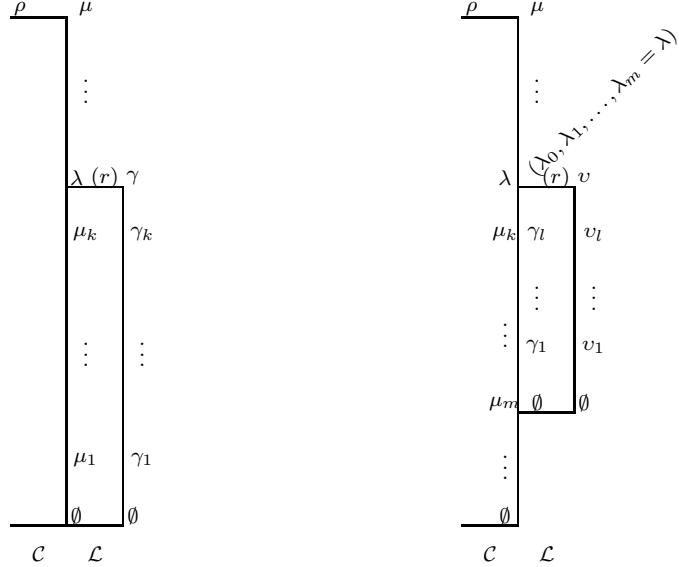

		$$
		\Einheit.25cm
		\PfadDicke{.5pt}
		\Pfad(0,0),111111\endPfad
		\Pfad(3,18),111\endPfad
		\Pfad(0,27),111\endPfad
		\Pfad(3,0),222222222222222222222222222\endPfad
		\Pfad(6,0),222222222222222222\endPfad
		\Label\ro{\text {\tiny $\emptyset$}}(3,0)
		\Label\ro{\text {\tiny $\mu_1$}}(3.5,3)
		\Label\ro{\text {\tiny $\vdots$}}(3.5,9)
		\Label\ro{\text {\tiny $\vdots$}}(3.5,23)
		\Label\ro{\text {\tiny $\mu_k$}}(3.5,15)
		\Label\ro{\text {\tiny $\lambda$}}(3,18)
		\Label\ro{\text {\tiny $\mu$}}(3.5,27)
		\Label\ro{\text {\tiny $\rho$}}(0,27)
		\Label\ro{\text {\tiny $\mathcal{C}$}}(1,-2)
		\Label\ro{\text {\tiny $\mathcal{L}$}}(4,-2)
		\Label\ro{\text {\tiny $\emptyset$}}(6,0)
		\Label\ro{\text {\tiny $\gamma_1$}}(6.5,3)
		\Label\ro{\text {\tiny $\vdots$}}(6.5,9)
		\Label\ro{\text {\tiny $\gamma_k$}}(6.5,15)
		\Label\ro{\text {\tiny $\gamma$}}(6,18)
		\Label\ro{\text {\tiny $(r)$}}(4.5,18)
		\hbox{\hskip6cm}
		\Einheit.25cm
		\PfadDicke{.5pt}
		\Pfad(0,0),111\endPfad
		\Pfad(3,6),111\endPfad
		\Pfad(3,18),111\endPfad
		\Pfad(0,27),111\endPfad
		\Pfad(3,0),222222222222222222222222222\endPfad
		\Pfad(6,6),222222222222\endPfad
		\Label\ro{\text {\tiny $\emptyset$}}(3.5,6)
		\Label\ro{\text {\tiny $\gamma_1$}}(3.5,9)
		\Label\ro{\text {\tiny $\vdots$}}(3.5,12)
		\Label\ro{\text {\tiny $\gamma_l$}}(3.5,15)
		\Label\ro{\text {\tiny $\rho$}}(0,27)
		\Label\ro{\slanttext {\tiny$(\lambda_0,\lambda_1,\dots,\lambda_m=\lambda)$}}(7,22)
		\Label\ro{\text {\tiny $\mu$}}(3.5,27)
		\Label\ro{\text {\tiny $\mathcal{C}$}}(1,-2)
		\Label\ro{\text {\tiny $\mathcal{L}$}}(4,-2)
		\Label\ro{\text {\tiny $\emptyset$}}(6,6)
		\Label\ro{\text {\tiny $\upsilon_1$}}(6.5,9)
		\Label\ro{\text {\tiny $\vdots$}}(6.5,12)
		\Label\ro{\text {\tiny $\upsilon_l$}}(6.5,15)
		\Label\ro{\text {\tiny $\upsilon$}}(6,18)
		\Label\ro{\text {\tiny $\vdots$}}(3.5,23)
		\Label\ro{\text {\tiny $\emptyset$}}(1.8,0)
		\Label\ro{\text {\tiny $\vdots$}}(1.8,3)
		\Label\ro{\text {\tiny $\mu_m$}}(1.8,6)
		\Label\ro{\text {\tiny $\vdots$}}(1.8,10)
		\Label\ro{\text {\tiny $\mu_k$}}(1.8,15)
		\Label\ro{\text {\tiny $\lambda$}}(1.8,18)
		\Label\ro{\text {\tiny $(r)$}}(4.5,18)
		\hskip2cm
		$$
		\caption{An illustration for the proof of Theorem \ref{Th4.3}.}
		\label{fig:Th4.3}
	\end{figure}

\end{proof}

See Fig.\ref{fig:20} for an example. This example demonstrates the correspondence between the $01$-filling of the stack polyomino and the sequence along the top-right border of the stack polyomino.

\begin{figure}
	$$
	\Einheit0.22cm
	\PfadDicke{0.5pt}
	\Pfad(0,0),111111111111111111\endPfad
	\Pfad(0,3),111111111111111111111111111111\endPfad
	\Pfad(0,6),111111111111111111111111111111\endPfad
	\Pfad(0,9),111111111111111111111111111111\endPfad
	\Pfad(0,12),111111111111111111111111111111111\endPfad
	\Pfad(0,15),111111111111111111111111111111111\endPfad
	\Pfad(0,18),111111111111111111111111111111111111111\endPfad
	\Pfad(0,21),111111111111111111111111111111111111111\endPfad
	\Pfad(0,24),111111111111111111111111111111111111111\endPfad
	\Pfad(0,27),111111111111111111111111111111111111\endPfad
	\Pfad(0,30),111111111111111111111111111111111111\endPfad
	\Pfad(0,0),222222222222222222222222222222\endPfad
	\Pfad(3,0),222222222222222222222222222222\endPfad
	\Pfad(6,0),222222222222222222222222222222\endPfad
	\Pfad(9,0),222222222222222222222222222222\endPfad
	\Pfad(12,0),222222222222222222222222222222\endPfad
	\Pfad(15,0),222222222222222222222222222222\endPfad
	\Pfad(18,0),222222222222222222222222222222\endPfad
	\Pfad(21,3),222222222222222222222222222\endPfad
	\Pfad(24,3),222222222222222222222222222\endPfad
	\Pfad(27,3),222222222222222222222222222\endPfad
	\Pfad(30,3),222222222222222222222222222\endPfad
	\Pfad(33,12),222222222222222222\endPfad
	\Pfad(36,18),222222222222\endPfad
	\Pfad(39,18),222222\endPfad
	\Label\ro{\text {\small$X$}}(1,16)
	\Label\ro{\text {\small$X$}}(4,7)
	\Label\ro{\text {\small$X$}}(7,10)
	\Label\ro{\text {\small$X$}}(10,19)
	\Label\ro{\text {\small$X$}}(13,25)
	\Label\ro{\text {\small$X$}}(16,1)
	\Label\ro{\text {\small$X$}}(19,7)
	\Label\ro{\text {\small$X$}}(22,16)
	\Label\ro{\text {\small$X$}}(25,22)
	\Label\ro{\text {\small$X$}}(28,13)
	\Label\ro{\text {\small$X$}}(31,13)
	\Label\ro{\text {\small$X$}}(34,28)
	\Label\ro{\text {\small$X$}}(37,19)
	\Label\ro{\text {\small$\emptyset$}}(-0.5,31)
	\Label\ro{\text {\small$1$}}(2.5,31)
	\Label\ro{\text {\small$11$}}(5.5,31)
	\Label\ro{\text {\small$21$}}(8.5,31)
	\Label\ro{\text {\small$31$}}(11.5,31)
	\Label\ro{\text {\small$41$}}(14.5,31)
	\Label\ro{\slanttext {\small$(41,411)$}}(19,33)
	\Label\ro{\text {\small$411$}}(20.5,31)
	\Label\ro{\text {\small$421$}}(23.5,31)
	\Label\ro{\text {\small$431$}}(26.5,31)
	\Label\ro{\slanttext {\small$(321,331,432,432)$}}(34,36)
	\Label\ro{\slanttext {\small$(21,32,321^3)$}}(35.5,34)
	\Label\ro{\text {\small$31$}}(36,31)
	\Label\ro{\text {\small$21$}}(37,27)
	\Label\ro{\text {\small$2$}}(36.5,24.5)
	\Label\ro{\text {\small$21$}}(40,24)
	\Label\ro{\text {\small$1$}}(40,21)
	\Label\ro{\text {\small$\emptyset$}}(40,18)
	\hskip7cm
	$$
	\vspace{1cm}
	$$
	\Einheit0.22cm
	\PfadDicke{0.5pt}
	\Pfad(0,0),111111111111111111\endPfad
	\Pfad(0,3),111111111111111111\endPfad
	\Pfad(0,6),111111111111111111\endPfad
	\Pfad(0,9),111111111111111111\endPfad
	\Pfad(0,12),111111111111111111\endPfad
	\Pfad(0,15),111111111111111111\endPfad
	\Pfad(0,18),111111111111111111\endPfad
	\Pfad(0,21),111111111111111111\endPfad
	\Pfad(0,24),111111111111111111\endPfad
	\Pfad(0,27),111111111111111111\endPfad
	\Pfad(0,30),111111111111111111\endPfad
	\Pfad(0,0),222222222222222222222222222222\endPfad
	\Pfad(3,0),222222222222222222222222222222\endPfad
	\Pfad(6,0),222222222222222222222222222222\endPfad
	\Pfad(9,0),222222222222222222222222222222\endPfad
	\Pfad(12,0),222222222222222222222222222222\endPfad
	\Pfad(15,0),222222222222222222222222222222\endPfad
	\Pfad(18,0),222222222222222222222222222222\endPfad
	\Pfad(27,3),111111111111\endPfad
	\Pfad(27,6),111111111111\endPfad
	\Pfad(27,9),111111111111\endPfad
	\Pfad(27,12),111111111111\endPfad
	\Pfad(27,15),111111111111\endPfad
	\Pfad(27,18),111111111111\endPfad
	\Pfad(27,21),111111111111\endPfad
	\Pfad(27,24),111111111111\endPfad
	\Pfad(27,27),111111111111\endPfad
	\Pfad(27,30),111111111111\endPfad
	\Pfad(27,3),222222222222222222222222222\endPfad
	\Pfad(30,3),222222222222222222222222222\endPfad
	\Pfad(33,3),222222222222222222222222222\endPfad
	\Pfad(36,3),222222222222222222222222222\endPfad
	\Pfad(39,3),222222222222222222222222222\endPfad
	\Pfad(48,12),111\endPfad
	\Pfad(48,15),111\endPfad
	\Pfad(48,18),111\endPfad
	\Pfad(48,21),111\endPfad
	\Pfad(48,24),111\endPfad
	\Pfad(48,27),111\endPfad
	\Pfad(48,30),111\endPfad
	\Pfad(48,12),222222222222222222\endPfad
	\Pfad(51,12),222222222222222222\endPfad
	\Pfad(60,18),111111\endPfad
	\Pfad(60,21),111111\endPfad
	\Pfad(60,24),111111\endPfad
	\Pfad(60,27),111\endPfad
	\Pfad(60,30),111\endPfad
	\Pfad(60,18),222222222222\endPfad
	\Pfad(63,18),222222222222\endPfad
	\Pfad(66,18),222222\endPfad
	\Label\ro{\text {\small$X$}}(1,16)
	\Label\ro{\text {\small$X$}}(4,7)
	\Label\ro{\text {\small$X$}}(7,10)
	\Label\ro{\text {\small$X$}}(10,19)
	\Label\ro{\text {\small$X$}}(13,25)
	\Label\ro{\text {\small$X$}}(16,1)
	\Label\ro{\text {\small$X$}}(28,7)
	\Label\ro{\text {\small$X$}}(31,16)
	\Label\ro{\text {\small$X$}}(34,22)
	\Label\ro{\text {\small$X$}}(37,13)
	\Label\ro{\text {\small$X$}}(49,13)
	\Label\ro{\text {\small$X$}}(61,28)
	\Label\ro{\text {\small$X$}}(64,19)
	\Label\ro{\text {\small$\emptyset$}}(-0.5,31)
	\Label\ro{\text {\small$1$}}(2.5,31)
	\Label\ro{\text {\small$11$}}(5.5,31)
	\Label\ro{\text {\small$21$}}(8.5,31)
	\Label\ro{\text {\small$31$}}(11.5,31)
	\Label\ro{\text {\small$41$}}(14.5,31)
	\Label\ro{\text {\small$411$}}(18,31)
	\Label\ro{\text {\small$\emptyset$}}(18.5,-0.5)
	\Label\ro{\text {\small$1$}}(18.5,2.5)
	\Label\ro{\text {\small$1$}}(18.5,5.5)
	\Label\ro{\text {\small$11$}}(18.5,8.5)
	\Label\ro{\text {\small$21$}}(18.5,11.5)
	\Label\ro{\text {\small$21$}}(18.5,14.5)
	\Label\ro{\text {\small$211$}}(19,17.5)
	\Label\ro{\text {\small$311$}}(19,20.5)
	\Label\ro{\text {\small$311$}}(19,23.5)
	\Label\ro{\text {\small$411$}}(19,26.5)
	\Label\ro{\text {\small$\emptyset$}}(25.5,2.5)
	\Label\ro{\text {\small$\emptyset$}}(25.5,5.5)
	\Label\ro{\text {\small$1$}}(25.5,8.5)
	\Label\ro{\text {\small$2$}}(25.5,11.5)
	\Label\ro{\text {\small$2$}}(25.5,14.5)
	\Label\ro{\text {\small$21$}}(25.5,17.5)
	\Label\ro{\text {\small$31$}}(25.5,20.5)
	\Label\ro{\text {\small$31$}}(25.5,23.5)
	\Label\ro{\text {\small$41$}}(25.5,26.5)
	\Label\ro{\text {\small$41$}}(25.5,31)
	\Label\ro{\text {\small$411$}}(29,31)
	\Label\ro{\text {\small$421$}}(32,31)
	\Label\ro{\text {\small$431$}}(35,31)
	\Label\ro{\text {\small$432$}}(39,31)
	\Label\ro{\text {\small$\emptyset$}}(39.5,2.5)
	\Label\ro{\text {\small$\emptyset$}}(39.5,5.5)
	\Label\ro{\text {\small$1$}}(39.5,8.5)
	\Label\ro{\text {\small$21$}}(39.5,11.5)
	\Label\ro{\text {\small$31$}}(39.5,14.5)
	\Label\ro{\text {\small$321$}}(40,17.5)
	\Label\ro{\text {\small$322$}}(40,20.5)
	\Label\ro{\text {\small$422$}}(40,23.5)
	\Label\ro{\text {\small$432$}}(40,26.5)
	\Label\ro{\text {\small$\emptyset$}}(47,11.5)
	\Label\ro{\text {\small$1$}}(47,14.5)
	\Label\ro{\text {\small$11$}}(46.5,17.5)
	\Label\ro{\text {\small$211$}}(46,20.5)
	\Label\ro{\text {\small$311$}}(46,23.5)
	\Label\ro{\text {\small$321$}}(46,26.5)
	\Label\ro{\text {\small$321$}}(46.5,31)
	\Label\ro{\text {\tiny$3$}}(49,30)
	\Label\ro{\text {\tiny$3$}}(49,27)
	\Label\ro{\text {\tiny$3$}}(49,24)
	\Label\ro{\text {\tiny$3$}}(49,21)
	\Label\ro{\text {\tiny$2$}}(49,18)
	\Label\ro{\text {\tiny$1$}}(49,15)
	\Label\ro{\text {\small$\emptyset$}}(51,11.5)
	\Label\ro{\text {\small$1$}}(51,14.5)
	\Label\ro{\text {\small$11$}}(51.5,17.5)
	\Label\ro{\text {\small$211$}}(52,20.5)
	\Label\ro{\text {\small$311$}}(52,23.5)
	\Label\ro{\text {\small$321$}}(52,26.5)
	\Label\ro{\text {\small$321$}}(52,31)
	\Label\ro{\text {\small$\emptyset$}}(58.5,17.5)
	\Label\ro{\text {\small$1$}}(58.5,20.5)
	\Label\ro{\text {\small$2$}}(58.5,23.5)
	\Label\ro{\text {\small$21$}}(58.5,26.5)
	\Label\ro{\text {\small$21$}}(58.5,31)
	\Label\ro{\text {\small$31$}}(63,31)
	\Label\ro{\text {\small$21$}}(63.5,26.5)
	\Label\ro{\text {\small$2$}}(63.5,24.5)
	\Label\ro{\text {\small$21$}}(66.5,23.5)
	\Label\ro{\text {\small$1$}}(66,20.5)
	\Label\ro{\text {\small$\emptyset$}}(66,17.5)
	\hskip14cm
	$$
	\caption{An example of Theorem \ref{Th4.3}.}
	\label{fig:20}
\end{figure}

$\newline$
\subsection{The second main theorem}
$\newline$

Let $T$ be an increasing tableau with shape $\mu/\lambda$. A $rectification$ of $T$ is a straight increasing tableau that can be obtained by applying a sequence of jeu de taquin for $T$. Let $rec(T)$ denote the set of all rectifications of $T$.
There is an obvious fact that for any $\mathcal{T}\in rec(T)$, $\mathcal{T}$ and $T$ are K-jeu de taquin equivalent.

Let $U$ be a standard straight increasing tableau with shape $\lambda$, we denote by $rec_U(T)$ an element of $rec(T)$. It can be obtained by the following steps:

 \begin{itemize}
 	\item Let $k$ be the largest entry in $U$, and let $C_i$ be the set of boxes in $U$ that contain the entry $i$. Setting $T_k:=T$ we recursively define $$T_{i-1}:=jdt_{C_i}(T_i)$$
 	for $i=k,k-1,\dots,1$. Finally, we set $rec_U(T)=T_0$.
 \end{itemize}

\begin{theorem}
	Let $\mathcal{M}$ be a stack polyomino with left justified rows. Given a 01-filling $\pi$ of $\mathcal{M}$ such that there is at most one $1$ in each column, we construct its Hecke growth diagram $\mathcal{M}(\pi)$. Let $c$ be any corner of the right/up boundary of $\mathcal{M}$, labeled by $(\lambda_0,\lambda_1,\dots,\lambda_t)$. Let $\mathcal{R}$ be the largest rectangular region below and to the left of $c$. The word corresponding to the filling in $\mathcal{R}$ is denoted by $w$. Let $P(w)$ be the Hecke insertion tableau of $w$. If the shape of $P(w)$ is $\lambda$, then $\lambda$ and $\lambda_t$ have the same length of the first row and the first column, respectively.
	\label{Th4.11}
\end{theorem}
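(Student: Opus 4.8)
The plan is to reduce the statement to the equality of two pairs of statistics and then transport them through the construction by means of $K$-Knuth equivalence. Since the shape of $P(w)$ is $\lambda$, Theorem~\ref{Th2.1} gives that the first row of $\lambda$ has length $c(P(w))=\mathrm{lis}(w)$ and the first column has length $r(P(w))=\mathrm{lds}(w)$. Let $P$ be the straight increasing tableau of shape $\lambda_t$ sitting on the portion of the right border of the column $\mathcal{C}$ immediately to the left of $c$, running from its bottom up to the height of $c$; this is exactly the tableau whose shape is recorded as $\lambda_t$. Because $P$ is a straight increasing tableau, Hecke-inserting its reading word returns $P$ itself, so $c(P)=\mathrm{lis}(\mathrm{row}(P))$ and $r(P)=\mathrm{lds}(\mathrm{row}(P))$ again by Theorem~\ref{Th2.1}; thus the first row and first column of $\lambda_t$ have lengths $\mathrm{lis}(\mathrm{row}(P))$ and $\mathrm{lds}(\mathrm{row}(P))$. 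Consequently the theorem is equivalent to $\mathrm{lis}(\mathrm{row}(P))=\mathrm{lis}(w)$ and $\mathrm{lds}(\mathrm{row}(P))=\mathrm{lds}(w)$, and for this it suffices to prove $\mathrm{row}(P)\equiv w$ together with the fact that $K$-Knuth equivalence preserves $\mathrm{lis}$ and $\mathrm{lds}$.

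I would first dispose of the invariance fact. By Theorem~\ref{Th2.1}, $\mathrm{lis}$ and $\mathrm{lds}$ equal $c(P(\cdot))$ and $r(P(\cdot))$, but the full insertion shape is not a $K$-Knuth invariant, so I would instead check invariance directly on the four defining relations: the two ordinary Knuth moves are classically known to fix $\mathrm{lis}$ and $\mathrm{lds}$; the relation $x\equiv xx$ changes neither, since a strictly monotone subsequence uses at most one of two equal adjacent letters; and for $xyx\equiv yxy$ with $x<y$ a short case analysis of how a strictly increasing (resp.\ decreasing) subsequence may enter and leave the modified block shows its maximal length is unchanged.

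The heart of the argument is the equivalence $\mathrm{row}(P)\equiv w$, which I would establish by induction on the columns of $\mathcal{M}$, carrying the invariant that the reading word of the tableau recorded on the right border of each processed column, truncated at a given height $h$, is $K$-Knuth equivalent to the word of the rectangle formed by that column together with all columns to its left, taken over the rows between the current bottom floor and $h$. Within a maximal bottom-justified block the forward local rules coincide with Hecke insertion (Theorem~\ref{PQ-w}), so the invariant is preserved by Lemma~\ref{pro3}. At a turn with $l$ rows below it the construction applies the jeu de taquin map $l$ times; each application vacates the minimal value, rectifies by a single $K$-jeu de taquin slide, and decrements all entries, so by Theorem~\ref{pro2} it sends $\mathrm{row}(T)$ to a word $K$-Knuth equivalent to the restriction of $\mathrm{row}(T)$ to the letters $\geq 2$, after the harmless downward shift by $1$. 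Since, by Lemma~\ref{pro1}, restriction to an integer interval preserves $K$-Knuth equivalence, this matches exactly the combinatorial effect of deleting the lowest row of the corresponding rectangle. Tracking the floor upward through all the turns lying below $c$ then identifies the active rectangle at the height of $c$ with $\mathcal{R}$, so that at the final step $\mathrm{row}(P)\equiv w$, as required.

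The step I expect to be the main obstacle is precisely this matching in the third paragraph: one must verify, for the stack shape, that the geometric operation ``take the largest rectangle below and to the left of $c$'' corresponds column by column to the exact sequence of interval restrictions produced by the jeu de taquin maps at the intervening turns, and that these restrictions stay compatible with $K$-Knuth equivalence even though a $K$-theoretic slide may delete a box and thereby lower the number of rows (as in Fig.~\ref{fig:15}). This is the reason the conclusion is confined to the first row and first column: the intermediate shapes $\lambda_t$ and $\lambda$ need not agree, and only the two statistics isolated by $\mathrm{lis}$ and $\mathrm{lds}$ survive the passage through the turns.
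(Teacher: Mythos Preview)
Your proposal is correct and follows essentially the same route as the paper: both argue by induction on the columns, carrying as invariant that the straight increasing tableau read off the right border of the current column is $K$-Knuth equivalent to the Hecke insertion tableau of the word coming from the associated rectangle; both handle bottom-justified transitions via Hecke insertion and turns via the jeu de taquin map together with interval restriction (the paper isolates these as Lemmas~\ref{lemma4.5}--\ref{lemma4.9}, which your sketch compresses).

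The one substantive difference is the final step. The paper invokes Lemma~\ref{lamma4.10} (quoted from \cite{GMPPRST}), which says that two $K$-Knuth equivalent straight increasing tableaux share the same first row and first column as subtableaux, and applies it to $T\equiv P(w)$. You instead verify directly on the four $K$-Knuth relations that $\mathrm{lis}$ and $\mathrm{lds}$ are preserved, and then use Theorem~\ref{Th2.1} on each side. Your route is more self-contained and avoids the external citation, at the cost of the small interface analysis for $xyx\equiv yxy$ and the two ordinary Knuth moves (which, while elementary, does require checking how a longest strictly monotone subsequence can enter and exit the modified three-letter window). Either way the conclusion is the same, and your closing remark that only first-row and first-column lengths survive the induction---because the full shapes $\lambda$ and $\lambda_t$ need not agree---is exactly the content of the paper's Remark following the proof.
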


Before proving Theorem \ref{Th4.11}, we need to prepare the following lemmas that will be used for the proof of this theorem.


\newpage

\begin{lemma}
	Let $P$ be a straight increasing tableau with entries in $[n]$. Let $T$ be the subtableau of $P$ with entries in $[m]$ $(m \leq n)$. Then, there exists $\mathcal{T}\in rec(P/T)$ such that $\mathcal{T}-m=\underbrace{jdt(\cdots jdt}_{m}(P))$, where $\mathcal{T}-m$ is a straight increasing tableau obtained by decreasing all entries of $\mathcal{T}$ by $m$.
	\label{lemma4.5}
\end{lemma}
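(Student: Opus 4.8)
The plan is to induct on $m$, exploiting the fact that, up to the global shift by $1$ built into its definition, one application of the jeu de taquin map $jdt$ of Section 4.1 to a straight increasing tableau is a single jeu de taquin slide that ejects the unique minimal box. Before starting I would record two easy preliminaries. Since $P$ is increasing, the boxes carrying an entry $\le m$ are closed under moving north or west, so they form a Young subshape; hence $T$ is a straight increasing tableau of some shape $\lambda$ and $P/T$ is a genuine skew increasing tableau with entries in $[m+1,n]$, so that $rec(P/T)$ is defined. Moreover, in any straight increasing tableau the minimal entry occupies the top-left box and occurs there uniquely, because entries strictly increase along rows and down columns.

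The base case $m=0$ is immediate: $T=\emptyset$, $P/T=P$, $rec(P)=\{P\}$, and $jdt^{0}(P)=P$, so $\mathcal{T}=P$ works. For the inductive step I would write $T^{-}$ for the subtableau of entries $\le m-1$, of shape $\mu\subseteq\lambda$, with $\lambda/\mu$ the rook strip of boxes of $P$ equal to $m$. Applying the induction hypothesis to $P$ and $m-1$ yields $\mathcal{S}\in rec(P/T^{-})$ with $\mathcal{S}-(m-1)=jdt^{m-1}(P)$; here $\mathcal{S}$ is straight increasing with minimal entry $m$ sitting alone in its top-left box, so $\mathcal{S}-(m-1)=jdt^{m-1}(P)$ has the entry $1$ there. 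If $m$ does not occur in $P$, then $\mu=\lambda$, the minimal entry of $\mathcal{S}-(m-1)$ is $\ge 2$, rule (C1) applies and merely lowers every entry by $1$; thus $jdt^{m}(P)=\mathcal{S}-m$ and $\mathcal{T}:=\mathcal{S}\in rec(P/T^{-})=rec(P/T)$ suffices. If $m$ does occur in $P$, then $jdt$ ejects the unique entry-$1$ box of $\mathcal{S}-(m-1)$ by a jeu de taquin slide and lowers all entries by $1$; setting $\mathcal{T}:=jdt^{m}(P)+m$, the tableau $\mathcal{T}$ is precisely the straight increasing tableau obtained from $\mathcal{S}$ by sliding its unique minimal (entry-$m$) box out to the boundary and deleting it.

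It then remains to prove the one substantive claim: that ejecting the minimal box of $\mathcal{S}$ this way produces a rectification of $P/T$. I would establish this by computing the rectification of $P/T^{-}$ in stages. Because the boxes of $P/T^{-}$ equal to $m$ are the smallest entries and form the rook strip $\lambda/\mu$, any rectification $\mathcal{S}$ of $P/T^{-}$ carries them into the northwest, here merging them into the single box where the minimum of $\mathcal{S}$ sits; the slides acting on the entries $\ge m+1$ then realise a rectification of the tableau obtained from $P/T^{-}$ by deleting its entry-$m$ boxes, which is exactly $P/T$. Concretely I would choose a standard filling $U'$ of $\lambda$ restricting to a standard filling of $\mu$ and assigning the largest labels to $\lambda/\mu$, and compare $rec_{U'}(P/T)$ with the slide just performed on $\mathcal{S}$; the commutation of jeu de taquin slides for increasing tableaux, together with Theorem \ref{pro2}, forces the two to agree, so $\mathcal{T}\in rec(P/T)$.

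This staging and commutation step is where I expect the real difficulty to lie. Rectification of increasing tableaux is not single-valued, and several entries equal to $m$ in $P$ may merge into one during sliding, so the box counts of $P/T$ and of $\mathcal{T}$ match only after this merging is taken into account. The argument therefore cannot appeal to a canonical rectification target; it must track a specific slide order and verify that ejecting the already-merged single minimal box of $\mathcal{S}$ reproduces one admissible rectification of $P/T$, which is exactly the content the induction needs to carry forward.
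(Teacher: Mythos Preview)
Your inductive approach is different from the paper's, and the informal idea you state is correct: when a slide $jdt_D$ is applied to a skew increasing tableau whose smallest entry is $m$, the swaps with $m$ move the bullets to a new set $D'$ of inner corners of the entry-$>m$ subtableau, and the remaining swaps are literally $jdt_{D'}$ applied to that subtableau; hence $(jdt_D(S))^{>m}=jdt_{D'}(S^{>m})$. Iterating this along the slide sequence that produced $\mathcal{S}$ from $P/T^{-}$ shows that $\mathcal{S}^{>m}$ (which is $\mathcal{S}$ minus its top-left box) is reached from $P/T$ by forward slides, and one further slide into that vacated box gives $\mathcal{T}$, so $\mathcal{T}\in rec(P/T)$. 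That is a complete argument, and it is essentially what you wrote before the sentence beginning ``Concretely''.

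The gap is in the ``Concretely'' paragraph. You propose to compare $\mathcal{T}$ with $rec_{U'}(P/T)$ for a freshly chosen $U'$ and force them to agree via ``commutation of jeu de taquin slides'' and Theorem~\ref{pro2}. Neither tool does what you need: Thomas--Yong slides for increasing tableaux do \emph{not} commute (this is precisely why rectification is not unique), and Theorem~\ref{pro2} only yields $K$-jeu de taquin equivalence, which allows reverse slides and therefore does not by itself place $\mathcal{T}$ in $rec(P/T)$. So drop that paragraph and keep the direct observation above. By contrast, the paper avoids induction entirely: it runs the map $jdt$ on $P$ a total of $m$ times, records at step $i$ the set $C_{m-i+1}$ of boxes where the bullets land inside the shape of $T$, assembles these into a standard filling $U$ of that shape, and takes $\mathcal{T}=rec_U(P/T)$. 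The verification that $\mathcal{T}-m=jdt^{m}(P)$ is then immediate, since by construction the slide into $C_j$ reproduces exactly the portion of the $j$-th application of $jdt$ that acts on entries larger than $m$. Your corrected inductive argument and the paper's direct construction are really the same observation organised differently; the paper's version has the advantage of exhibiting the rectification order $U$ explicitly.
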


\begin{proof}
	
	See Fig.\ref{fig:22}, where $T$ is a straight increasing tableau.
	If the top-left box of $T$ is filled with $1$, we perform the jeu de taquin map on $T$. The $\bullet$ is swapped with all the entries of $T$, and finally, $\bullet$ reaches the inner corners of $T$ (see the middle figure in Fig.\ref{fig:22} for an example). Let $C_m$ be the set of these inner corners of $T$. Otherwise, if the top-left box of $T$ is filled with $\alpha$ where $\alpha>1$, we perform the jeu de taquin map on $T$, and let $C_m$ be $\emptyset$.
    
    For $jdt(T)$, which is a straight increasing tableau,
    if the top-left box of $jdt(T)$ is filled with $1$, we perform the jeu de taquin map on $jdt(T)$. The $\bullet$ is swapped with all the entries of $jdt(T)$, and finally, $\bullet$ reaches the inner corners of $jdt(T)$. Let $C_{m-1}$ be the set of these inner corners of $jdt(T)$. 
    Otherwise, if the top-left box of $jdt(T)$ is filled with $\alpha$ where $\alpha>1$, we perform jeu de taquin map on $jdt(T)$, and let $C_{m-1}$ be $\emptyset$.

    We repeat the above process and obtain the sequence $C_1, C_2,\dots, C_m$, where $C_1\cup C_2\cup\dots\cup C_m$ is exactly the shape of $T$. We construct a new straight increasing tableau $U$ with the property that the set of boxes with entries $i$ is  $C_i$ for $1\leq i\leq m$. 
    Let $\mathcal{T}=rec_U(P/T)$. It is easy to check that $\mathcal{T}-m=\underbrace{jdt(\cdots jdt}_{m}(P))$.

\end{proof}

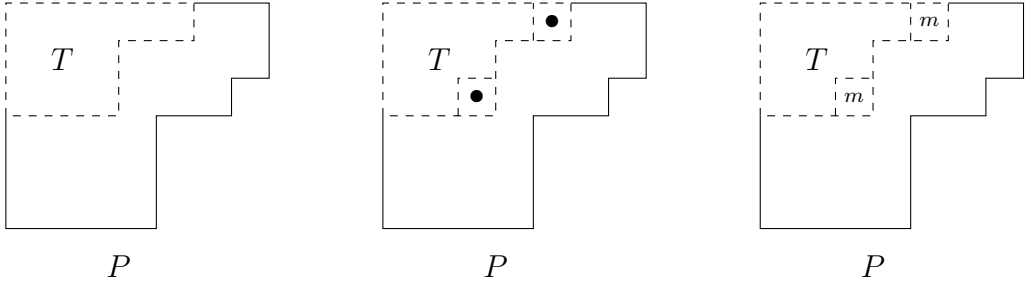
\begin{figure}
	$$
	\begin{tikzpicture}[scale=0.5]
		\node at (1.5,4.5) {$T$};
		\node at (3,-1) {$P$};
		
		\draw (0,3)--(0,0)--(4,0)--(4,3)--(6,3)--(6,4)--(7,4)--(7,6)--(5,6);
		\draw [dashed] 
		(0,3)--(0,6)--(5,6)--(5,5)--(3,5)--(3,3)--(0,3);
	\end{tikzpicture}
	\hbox{\hskip1.5cm}
	\begin{tikzpicture}[scale=0.5]
		\node at (1.5,4.5) {$T$};
		\node at (3,-1) {$P$};
		\node at (2.5,3.5) {$\bullet$};
		\node at (4.5,5.5) {$\bullet$};
		
		\draw (0,3)--(0,0)--(4,0)--(4,3)--(6,3)--(6,4)--(7,4)--(7,6)--(5,6);
		\draw [dashed] 
		(0,3)--(0,6)--(5,6)--(5,5)--(3,5)--(3,3)--(0,3)
		(2,3)--(2,4)--(3,4)
		(4,5)--(4,6);
	\end{tikzpicture}
	\hbox{\hskip1.5cm}
	\begin{tikzpicture}[scale=0.5]
		\node at (1.5,4.5) {$T$};
		\node at (3,-1) {$P$};
		\node at (2.5,3.5) {\tiny $m$};
		\node at (4.5,5.5) {\tiny $m$};
		
		\draw (0,3)--(0,0)--(4,0)--(4,3)--(6,3)--(6,4)--(7,4)--(7,6)--(5,6);
		\draw [dashed] 
		(0,3)--(0,6)--(5,6)--(5,5)--(3,5)--(3,3)--(0,3)
		(2,3)--(2,4)--(3,4)
		(4,5)--(4,6);
	\end{tikzpicture}
	$$
	\caption{An illustration for the proof of Lemma \ref{lemma4.5}.}
	\label{fig:22}
\end{figure}

\begin{lemma}
	Let $w$ be a word with elements contained in $[n]$, and let its Hecke insertion tableau be $P$.
	Let $w^{\prime}$ be the word obtained from $w$ by deleting all elements contained in $[m]$, and let $P^{\prime}$ be the Hecke insertion tableau of $w^{\prime}$. 
	Then $P^{\prime}-m \equiv \underbrace{jdt(\cdots jdt}_{m}(P)).$
	\label{lemma4.6}
\end{lemma}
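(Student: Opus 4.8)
The plan is to prove the asserted $K$-Knuth equivalence entirely at the level of reading words, using Lemma \ref{lemma4.5} to manufacture a rectification of the skew tableau $P/T$ that realizes the iterated jeu de taquin map, and then linking that rectification to $P'$ through the interval-restriction property of $K$-Knuth equivalence. The bridge is the observation that deleting small letters from a word corresponds, on the tableau side, to passing to the skew part above the subtableau of small entries.

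First I would set up the skew tableau. Write $T$ for the subtableau of $P$ consisting of the boxes with entries in $[m]$. Since $P$ is increasing, any box with entry $\le m$ has a strictly smaller entry in the box immediately to its left and immediately above it; hence the boxes of $T$ form a straight (Young) shape, $P/T$ is a genuine skew increasing tableau, and its entries lie in $\{m+1,\dots,n\}$. Reading $P$ row by row and discarding the entries $\le m$ leaves precisely the reading word of $P/T$, so $\mathrm{row}(P)|_{[m+1,n]}=\mathrm{row}(P/T)$. By hypothesis $w'=w|_{[m+1,n]}$, the word obtained from $w$ by deleting its letters in $[m]$.

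Next I assemble the chain of equivalences. By Lemma \ref{pro3}, $w\equiv\mathrm{row}(P)$; restricting to the interval $[m+1,n]$ and invoking Lemma \ref{pro1} gives
$$w'=w|_{[m+1,n]}\equiv\mathrm{row}(P)|_{[m+1,n]}=\mathrm{row}(P/T).$$
By Lemma \ref{lemma4.5} there is a rectification $\mathcal{T}\in rec(P/T)$ with $\mathcal{T}-m=\underbrace{jdt(\cdots jdt}_{m}(P))$; being a rectification, $\mathcal{T}$ is $K$-jeu de taquin equivalent to $P/T$, hence $K$-Knuth equivalent to it by Theorem \ref{pro2}, so $\mathrm{row}(\mathcal{T})\equiv\mathrm{row}(P/T)$. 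Applying Lemma \ref{pro3} once more to $w'$ yields $w'\equiv\mathrm{row}(P')$. Combining these gives $\mathrm{row}(P')\equiv\mathrm{row}(P/T)\equiv\mathrm{row}(\mathcal{T})$, that is $P'\equiv\mathcal{T}$.

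Finally I would descend by the constant shift. Both $\mathrm{row}(P')$ and $\mathrm{row}(\mathcal{T})$ are words on the alphabet $\{m+1,\dots,n\}$, and subtracting $m$ from every letter is an order-preserving relabeling of that alphabet. Since each $K$-Knuth relation constrains only the relative order of the letters it involves, such a relabeling transports a $K$-Knuth equivalence to a $K$-Knuth equivalence; therefore $\mathrm{row}(P')-m\equiv\mathrm{row}(\mathcal{T})-m$, i.e. $P'-m\equiv\mathcal{T}-m=\underbrace{jdt(\cdots jdt}_{m}(P))$, which is exactly the claim. I expect no genuine obstacle here: the proof is a concatenation of the cited lemmas, and the only point that requires care is the bookkeeping identity $\mathrm{row}(P)|_{[m+1,n]}=\mathrm{row}(P/T)$ together with the verification that $T$ is a straight shape and that shifting all entries preserves $\equiv$. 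Making these three reductions precise is the whole content of the argument.
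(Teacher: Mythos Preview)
Your proof is correct and follows essentially the same route as the paper: both arguments use Lemma~\ref{pro3} and Lemma~\ref{pro1} to obtain $P'\equiv P/T$ (the paper writes $P/[m]$), then invoke Lemma~\ref{lemma4.5} together with Theorem~\ref{pro2} to link a rectification of $P/T$ to $\underbrace{jdt(\cdots jdt}_{m}(P))$, and finally shift by $-m$. Your write-up is in fact slightly more explicit than the paper's in justifying that $T$ has straight shape, that $\mathrm{row}(P)|_{[m+1,n]}=\mathrm{row}(P/T)$, and that the constant shift preserves $K$-Knuth equivalence.
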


\begin{proof}
	By Lemma \ref{pro3}, we have $w\equiv row(P)$. 
	Thus, by Lemma \ref{pro1}, 
	$$w\setminus[m]\equiv row(P)\setminus[m]=row(P/[m]),$$
	where $w\setminus[m]$ means deleting 1's, 2's,..., m's from $w$, and $P/[m]$ means deleting the boxes with entries contained in $[m]$ from $P$. Since $$row(P^{\prime})\equiv w'=w\setminus[m],$$
	we have $row(P')\equiv row(P/[m])$. Thus, $P'\equiv P/[m]$. 
	By Theorem \ref{pro2}, $P^{\prime}-m$ and $P/[m]-m$ are $K$-jeu de taquin equivalent. By Lemma \ref{lemma4.5}, we have $P/[m]-m$ is $K$-jeu de taquin equivalent to $\underbrace{jdt(\cdots jdt}_{m}(P))$. Thus, $P^{\prime}-m \equiv \underbrace{jdt(\cdots jdt}_{m}(P)).$
\end{proof}

\begin{lemma}
	Let $T_1$ and $T_2$ be two straight increasing tableaux such that $T_1\equiv T_2$. Then, for any positive integer $k$, $(T_{1}\xleftarrow{\,\mathrm{H}}k)\equiv (T_{2}\xleftarrow{\,\mathrm{H}}k)$.
	\label{lemma4.7}
\end{lemma}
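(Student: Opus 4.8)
The plan is to reduce the statement to a single compatibility between Hecke insertion of one letter and concatenation of reading words, and then to invoke Lemma~\ref{pro3}. Write $v_i=\mathrm{row}(T_i)$, so that the hypothesis $T_1\equiv T_2$ is exactly $v_1\equiv v_2$. The first observation is that $K$-Knuth equivalence is a two-sided congruence: every $K$-Knuth relation is applied to a consecutive factor of a word, so if $u\equiv u'$ then $aub\equiv au'b$ for all words $a,b$; in particular $v_1\equiv v_2$ forces $v_1k\equiv v_2k$ for our fixed letter $k$. Hence it suffices to prove the following insertion--concatenation identity: for every straight increasing tableau $T$ and every positive integer $k$,
\begin{equation}
  \mathrm{row}\bigl(T\xleftarrow{\,\mathrm{H}}k\bigr)\equiv \mathrm{row}(T)\cdot k. \label{eq:subclaim-plan}
\end{equation}
Granting \eqref{eq:subclaim-plan}, the lemma follows in one line, since $\mathrm{row}(T_1\xleftarrow{\,\mathrm{H}}k)\equiv v_1k\equiv v_2k\equiv \mathrm{row}(T_2\xleftarrow{\,\mathrm{H}}k)$, i.e. $(T_1\xleftarrow{\,\mathrm{H}}k)\equiv(T_2\xleftarrow{\,\mathrm{H}}k)$.

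To prove \eqref{eq:subclaim-plan} I would apply Lemma~\ref{pro3} to the word $\mathrm{row}(T)\cdot k$, which gives $\mathrm{row}(T)\cdot k\equiv \mathrm{row}\bigl(P(\mathrm{row}(T)\cdot k)\bigr)$. By the recursive definition of the Hecke insertion tableau, $P(\mathrm{row}(T)\cdot k)=\bigl(P(\mathrm{row}(T))\xleftarrow{\,\mathrm{H}}k\bigr)$, so everything comes down to the identification $P(\mathrm{row}(T))=T$, that is, to the fact that the reading word of a straight increasing tableau re-inserts, under Hecke insertion, to that same tableau. With this in hand, $P(\mathrm{row}(T)\cdot k)=(T\xleftarrow{\,\mathrm{H}}k)$ and \eqref{eq:subclaim-plan} drops out.

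The main obstacle is therefore the equality $P(\mathrm{row}(T))=T$. Note that Lemma~\ref{pro3} only delivers $\mathrm{row}(T)\equiv\mathrm{row}(P(\mathrm{row}(T)))$, i.e. $T\equiv P(\mathrm{row}(T))$ as $K$-Knuth classes, and this is strictly weaker than equality, because a single $K$-Knuth class may contain several straight increasing tableaux. I would establish the equality by induction on the number of rows of $T$, peeling off the top row $R_1$: writing $T^-$ for the tableau formed by the remaining rows and reading $\mathrm{row}(T)=\mathrm{row}(T^-)\cdot R_1$, one inserts the letters of $R_1$ last and checks that they recreate exactly the top row of $T$ while the induced bumping rebuilds $T^-$. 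The delicate, Hecke-specific points are the merge steps where a box fails to be added or an entry fails to be replaced (governed by the relations $x\equiv xx$ and $xyx\equiv yxy$), together with the verification that the column constraint coming from the already-rebuilt row above never spuriously blocks a replacement in a lower row; these are precisely the places where Hecke insertion departs from classical RSK.

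An alternative, self-contained route proves \eqref{eq:subclaim-plan} directly, bypassing $P(\mathrm{row}(T))=T$: first establish a one-row lemma stating that inserting $x$ into a single increasing row $R$ and producing the new row $R'$ together with output letter $y$ satisfies $R\cdot x\equiv y\cdot R'$ (and $R\cdot x\equiv R'$ when there is no output), then run the same top-row induction, using $\mathrm{row}(T\xleftarrow{\,\mathrm{H}}x)=\mathrm{row}(T^-\xleftarrow{\,\mathrm{H}}y)\cdot R_1'$ and the inductive hypothesis $\mathrm{row}(T^-\xleftarrow{\,\mathrm{H}}y)\equiv\mathrm{row}(T^-)\cdot y$. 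Either way, the crux is the careful bookkeeping of the Hecke merge cases, which is why I expect this step, rather than the formal reduction, to be the real work.
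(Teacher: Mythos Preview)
Your approach is exactly the paper's: pass to reading words, use the congruence property to get $\mathrm{row}(T_1)\cdot k\equiv\mathrm{row}(T_2)\cdot k$, apply Lemma~\ref{pro3}, and identify $P(\mathrm{row}(T_i)\cdot k)$ with $(T_i\xleftarrow{\,\mathrm{H}}k)$. You are in fact more careful than the paper, which writes this last identification as an ``i.e.''\ without isolating the needed fact $P(\mathrm{row}(T))=T$; your discussion of that step and the alternative one-row induction are both sound, though neither appears in the paper's proof.
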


\begin{proof}
	Since $T_1\equiv T_2$, we have $row(T_1)\equiv row(T_2)$. Thus, $row(T_1)\cdot k\equiv row(T_2)\cdot k$, where $row(T_i)\cdot k$ denotes a new word $w_i=(row(T_i),k)$, for $i=1,2$. Let $P_i$ be the Hecke insertion tableau of $w_i$, by Lemma \ref{pro3}, we have $w_i\equiv row(P_i)$. Since $w_1\equiv w_2$, it follows that $row(P_1)\equiv row(P_2)$. Therefore, $P_1\equiv P_2$, i.e. $(T_{1}\xleftarrow{\,\mathrm{H}}k)\equiv (T_{2}\xleftarrow{\,\mathrm{H}}k)$.
\end{proof}

\begin{lemma}
	Let $T_1$ and $T_2$ be two straight increasing tableaux such that $T_1\equiv T_2$. Let $[a,b]$ be an interval of integers. For $i=1,2$, let $T_i|_{[a,b]}$ be the increasing tableau obtained from $T_i$ by deleting all boxes with entries not contained in the interval $[a,b]$. Then $T_1|_{[a,b]}\equiv T_2|_{[a,b]}$.
	\label{lemma4.8}
\end{lemma}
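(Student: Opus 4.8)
The plan is to reduce the claim about tableaux to the corresponding claim about their reading words, where Lemma \ref{pro1} does all the work. Recall that by definition $T_1\equiv T_2$ means $\mathrm{row}(T_1)\equiv\mathrm{row}(T_2)$, and the desired conclusion $T_1|_{[a,b]}\equiv T_2|_{[a,b]}$ is likewise to be read as $\mathrm{row}(T_1|_{[a,b]})\equiv\mathrm{row}(T_2|_{[a,b]})$. So it suffices to prove $\mathrm{row}(T_1|_{[a,b]})\equiv\mathrm{row}(T_2|_{[a,b]})$.

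The key observation I would establish first is that restriction commutes with taking the reading word: for $i=1,2$,
$$\mathrm{row}(T_i|_{[a,b]})=\mathrm{row}(T_i)|_{[a,b]}.$$
Indeed, the reading word visits the boxes of $T_i$ in a fixed order (bottom row to top, left to right within each row), and deleting the boxes whose entries lie outside $[a,b]$ leaves the surviving boxes in exactly this same order; this is precisely the word obtained from $\mathrm{row}(T_i)$ by deleting all letters outside $[a,b]$. Given this identity, the proof is immediate: from $T_1\equiv T_2$ we get $\mathrm{row}(T_1)\equiv\mathrm{row}(T_2)$, and Lemma \ref{pro1} yields $\mathrm{row}(T_1)|_{[a,b]}\equiv\mathrm{row}(T_2)|_{[a,b]}$. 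Combining with the displayed identity gives $\mathrm{row}(T_1|_{[a,b]})\equiv\mathrm{row}(T_2|_{[a,b]})$, as desired.

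The one point requiring care — and the main, albeit mild, obstacle — is to confirm that $T_i|_{[a,b]}$ is a genuine increasing tableau of skew shape, so that both $\mathrm{row}$ and the equivalence $\equiv$ are meaningful for it. This follows from the increasing property: the boxes of $T_i$ with entries $>b$ are closed under moving right and down (entries strictly increase along rows and down columns), so their complement is a straight shape $\rho$; within $\rho$, the boxes with entries $<a$ are closed under moving left and up, hence form a straight subshape $\sigma$. Thus $T_i|_{[a,b]}$ has skew shape $\rho/\sigma$, and it remains increasing because deleting boxes preserves strict increase along every row and column. With this structural check in place, all objects in the argument are well defined and the three-line reduction above completes the proof.
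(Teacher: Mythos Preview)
Your proposal is correct and follows exactly the approach the paper intends: the paper's proof consists of the single sentence ``This can be easily proved by using Lemma \ref{pro1},'' and you have simply spelled out the reduction to reading words together with the observation $\mathrm{row}(T_i|_{[a,b]})=\mathrm{row}(T_i)|_{[a,b]}$ and the check that $T_i|_{[a,b]}$ is a skew increasing tableau. Your version is more detailed than the paper's, but the underlying idea is identical.
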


\begin{proof}
	This can be easily proved by using Lemma \ref{pro1}.
\end{proof}

\begin{lemma}
	Let $T_1$ and $T_2$ be two straight increasing tableaux such that $T_1\equiv T_2$. Then $jdt(T_1)\equiv jdt(T_2)$.
	\label{lemma4.9}
\end{lemma}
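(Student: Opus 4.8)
The plan is to transport the statement about the abstract map $jdt$ down to the level of reading words, where the $K$-Knuth machinery of Section 2.4 applies verbatim. Write $w_i=\mathrm{row}(T_i)$, so that the hypothesis $T_1\equiv T_2$ means exactly $w_1\equiv w_2$. First I would record the elementary but essential observation that each of the four $K$-Knuth relations preserves the \emph{support} of a word (the set of distinct letters occurring): the relations $xzy\equiv zxy$ and $yxz\equiv yzx$ only permute letters, while $x\equiv xx$ and $xyx\equiv yxy$ leave $\{x\}$ and $\{x,y\}$ unchanged. Hence $w_1$ and $w_2$ have the same minimal letter. Since each $T_i$ is a \emph{straight} increasing tableau, its smallest entry occupies the top-left box, so $T_1$ and $T_2$ are governed by the same branch of the definition of $jdt$: either both have top-left entry $1$ (Case 1 or (C2)), or both have top-left entry $>1$ (Case (C1)). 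This is what makes a case split legitimate.

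In the branch where the top-left entries exceed $1$, the map $jdt$ simply decreases every entry by $1$, so $jdt(T_i)=T_i-1$. Subtracting $1$ is an order-preserving relabeling of the alphabet, and any order-preserving relabeling carries a sequence of $K$-Knuth moves to another such sequence; therefore $w_1\equiv w_2$ yields $w_1-1\equiv w_2-1$, that is $jdt(T_1)\equiv jdt(T_2)$.

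The substantive branch is when both tableaux carry $1$ in the top-left box, which by strict increase along rows and down columns is the unique $1$ in each $T_i$. Here I would identify the ad hoc Steps 1--3 of $jdt$ with a single-corner jeu de taquin slide followed by a global shift: deleting the entry $1$ turns $T_i$ into an increasing skew tableau $T_i\setminus\{1\}$ of inner shape $(1)$, Steps 1--2 are precisely the slide $jdt_{\{(1,1)\}}(T_i\setminus\{1\})$ into that inner corner, and Step 3 subtracts $1$; thus $jdt(T_i)=jdt_{\{(1,1)\}}(T_i\setminus\{1\})-1$. By Theorem \ref{pro2} a single jeu de taquin slide stays inside the $K$-Knuth class, so $jdt_{\{(1,1)\}}(T_i\setminus\{1\})\equiv T_i\setminus\{1\}$; passing to reading words and using that the unique $1$ is deleted gives $\mathrm{row}\bigl(jdt_{\{(1,1)\}}(T_i\setminus\{1\})\bigr)\equiv w_i\setminus\{1\}$. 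Now Lemma \ref{pro1} applied to the interval $[2,n]$ (with $n$ the largest entry) shows $w_1\setminus\{1\}\equiv w_2\setminus\{1\}$, whence $jdt_{\{(1,1)\}}(T_1\setminus\{1\})\equiv jdt_{\{(1,1)\}}(T_2\setminus\{1\})$, and subtracting $1$ once more (again an order-preserving relabeling) delivers $jdt(T_1)\equiv jdt(T_2)$.

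The main obstacle I anticipate is the bookkeeping of the second branch rather than any deep difficulty: one must check carefully that the map's Steps 1--2 really coincide with the Section 2.3 slide $jdt_{\{(1,1)\}}$ applied to $T_i\setminus\{1\}$ (in particular, that beginning the $\bullet$-swaps at the entry $2$ is the correct specialization of the Section 2.3 rule once no $1$ remains), and that deleting the unique $1$ from a straight increasing tableau is the same as restricting its reading word to $[2,n]$, so that Lemma \ref{pro1} is applicable. The order-preserving relabeling claim---that subtracting $1$ respects $\equiv$---is routine, but it should be stated explicitly, since the argument passes through a global shift of entries in both branches.
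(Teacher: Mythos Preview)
Your proposal is correct and follows essentially the same route as the paper: split according to whether the minimum entry is $1$, in the trivial branch use that subtracting $1$ is an order-preserving relabeling, and in the main branch combine Lemma~\ref{pro1} (restriction to $[2,n]$) with Theorem~\ref{pro2} (a jeu de taquin slide preserves the $K$-Knuth class) before shifting by $1$. Your write-up is in fact more careful than the paper's, which never justifies why $T_1$ and $T_2$ fall into the same branch---your support-preservation remark closes that gap.
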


\begin{proof}
	Since $T_1\equiv T_2$, we have $row(T_1)\equiv row(T_2)$.
	
	If the smallest entry of \(T_i\) is $1$ for \(i = 1, 2\), then \(\mathrm{row}(T_1) \setminus [1] \equiv \mathrm{row}(T_2) \setminus [1]\). Since \(\mathrm{row}(T_i) \setminus [1] = \mathrm{row}(T_i / [1])\), it follows that \(\mathrm{row}(T_1 / [1]) \equiv \mathrm{row}(T_2 / [1])\). Thus, \(T_1 / [1] \equiv T_2 / [1]\). By Theorem \ref{pro2}, \(T_1 / [1]\) and \(T_2 / [1]\) are $K$-jeu de taquin equivalent. Therefore, \(jdt(T_1) \equiv jdt(T_2)\). 
	
	If the smallest entry of \(T_i\) is greater than $1$, then \(jdt(T_i)\) is equivalent to decreasing all entries of \(T_i\) by $1$. Thus, \(\mathrm{row}(T_1 - 1) \equiv \mathrm{row}(T_2 - 1)\), where \(T_i - 1\) means decreasing all entries of \(T_i\) by $1$ for \(i = 1, 2\). Therefore, \(jdt(T_1) \equiv jdt(T_2)\).
\end{proof}

\begin{lemma}
	\cite[Proposition 2.43]{GMPPRST} Let $T_1$ and $T_2$ be two straight increasing tableaux such that $T_1\equiv T_2$. Then $T_1$ and $T_2$ have the same subtableau consisting of the first row and the first column.
	\label{lamma4.10}
\end{lemma}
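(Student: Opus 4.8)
The plan is to prove the two assertions ``same first row'' and ``same first column'' separately; since the two agree on the corner box $(1,1)$ (both record the minimum entry), together they give the common hook-shaped subtableau claimed. The engine is a Greene-type reading of the statistics $lis$ and $lds$ off the restrictions of $T_1$ and $T_2$ to value-intervals $[1,v]$, fed into the stability of $K$-Knuth equivalence under such restrictions (Lemma \ref{pro1}) and into Theorem \ref{Th2.1}.

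First I would isolate two elementary facts about a single straight increasing tableau $U$, whose reading word $row(U)$ is read from the bottom row to the top, left to right in each row: namely $lis(row(U))=c(U)$ and $lds(row(U))=r(U)$. For $lis$, the top row of $U$, read left to right, is strictly increasing of length $c(U)$ and occurs (consecutively, at the end) as a subsequence of $row(U)$, giving $lis(row(U))\ge c(U)$; conversely, along any strictly increasing subsequence the column indices must strictly increase, because within a row the reading order already increases columns, while for two boxes in different rows a value increase forces the later (higher) box strictly to the right, using that columns increase downward and rows increase rightward. Hence $lis(row(U))\le c(U)$. The statement for $lds$ is even quicker: two boxes in the same row never form a strictly decreasing pair in reading order, so a strictly decreasing subsequence lies in strictly decreasing rows, giving $lds(row(U))\le r(U)$, while the first column read top-to-bottom appears in reading order as $b_q>b_{q-1}>\cdots>b_1$ and yields $lds(row(U))\ge r(U)=q$.

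Next I would record the structural observation underlying the restriction step. For each positive integer $v$ the set of boxes of $T$ with entry $\le v$ is left- and down-closed, hence a straight Young diagram; write $T_{\le v}$ for the corresponding straight increasing tableau. Because entries increase along rows, $T_{\le v}$ takes left-justified prefixes of the rows of $T$, so its reading word is exactly $row(T)|_{[1,v]}$, and its number of columns equals $\#\{j:a_j\le v\}$, where $a_1<a_2<\cdots$ is the first row of $T$. Now suppose $T_1\equiv T_2$, so $row(T_1)\equiv row(T_2)$; by Lemma \ref{pro1} we get $row(T_1)|_{[1,v]}\equiv row(T_2)|_{[1,v]}$ for every $v$. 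Since $lis$ is a $K$-Knuth invariant (a routine verification on the four defining relations $xzy\equiv zxy$, $yxz\equiv yzx$, $x\equiv xx$, $xyx\equiv yxy$, each of which manifestly preserves the longest strictly increasing subsequence), applying the fact $lis(row(U))=c(U)$ to $U=T_i{}_{\le v}$ gives $\#\{j:a_j(T_1)\le v\}=\#\{j:a_j(T_2)\le v\}$ for all $v$. As these cumulative counts jump by one precisely at the first-row entries, and the entries are strictly increasing, the first rows of $T_1$ and $T_2$ coincide. Running the identical argument with $lds$ in place of $lis$ and $r(U)$ in place of $c(U)$ (using that $lds$ is likewise a $K$-Knuth invariant) shows the first columns coincide.

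The main obstacle is not any single hard inequality but the care needed at the interface steps: verifying that $T_{\le v}$ is genuinely a straight increasing tableau with reading word $row(T)|_{[1,v]}$, and that $lis,lds$ are bona fide $K$-Knuth invariants rather than merely functions of an insertion tableau (this matters because, unlike ordinary Knuth equivalence, $K$-Knuth equivalent words need not share a common insertion tableau, so the invariance must be checked directly on the four relations). Once those are in place, the two auxiliary identities $lis(row(U))=c(U)$ and $lds(row(U))=r(U)$ do all the work, and the conclusion follows by comparing the cumulative count functions $v\mapsto\#\{j:a_j\le v\}$ and $v\mapsto\#\{i:b_i\le v\}$ for $T_1$ and $T_2$.
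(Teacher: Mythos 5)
The paper never proves this lemma itself: it is quoted verbatim from \cite[Proposition 2.43]{GMPPRST}, so there is no internal proof to compare against, and the only question is whether your blind argument is sound. It is. The restriction $T_{\le v}$ of a straight increasing tableau to entries at most $v$ is indeed a straight increasing tableau whose reading word is $row(T)|_{[1,v]}$ (entries $\le v$ form left-justified row prefixes), Lemma \ref{pro1} transports the $K$-Knuth equivalence across this restriction, your two direct identities $lis(row(U))=c(U)$ and $lds(row(U))=r(U)$ are correctly proved (the argument that a value increase between boxes in different rows forces a strict column increase uses exactly the right comparison through the box $(i',j)$, which exists because the shape is straight), and the cumulative-count functions $v\mapsto\#\{j:a_j\le v\}$ and $v\mapsto\#\{i:b_i\le v\}$ do determine the strictly increasing first row and first column. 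You were also right to flag the genuine trap: since $K$-Knuth classes need not contain a unique insertion tableau, invariance of $lis$ and $lds$ cannot be deduced from Theorem \ref{Th2.1} alone and must be checked on the four relations.

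The one place your write-up undersells the work is the word ``manifestly.'' For the relations $x\equiv xx$ and $xyx\equiv yxy$ the invariance of strict $lis$ and $lds$ really is immediate, but for the two classical Knuth relations the naive comparison of subsequences available inside the three-letter window fails: in $xzy$ the increasing pair $x,z$ is usable, while in $zxy$ it is not. One needs the standard replacement argument --- a strictly increasing subsequence through $x,z$ in $u\,xzy\,v$ becomes one through $x,y$ in $u\,zxy\,v$, since its continuation in $v$ exceeds $z>y>x$, and symmetrically for $lds$ and for the relation $yxz\equiv yzx$. With that verification spelled out, your proof is complete and self-contained within the paper's stated lemmas, in the same Greene-invariant spirit as the cited source \cite{GMPPRST}.
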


Now, we give the proof of Theorem \ref{Th4.11}.

\begin{proof}
     We prove this theorem by induction:

	If $c$ is the top-right corner of the first column, let $T_1$ be the straight increasing tableau corresponding to the sequence of partitions along the right border of the first column, and let $P_1$ be the Hecke insertion tableau of the filling in the first column. Then we have $T_1\equiv P_1$. Moreover, $T_1= P_1$. 
	
	Now, let $c$ be the top-right corner of the $i$-th column ($i\geq 1$).
	Let $\mathcal{R}$ be the largest rectangular region below and to the left of $c$. The filling in $\mathcal{R}$ corresponds to the word $w$.
	Let the sequence of partitions along the right border of the $i$-th column corresponds to a straight increasing tableau $T$. 
	Assume that $T \equiv P(w) $.

	\begin{figure}[h]
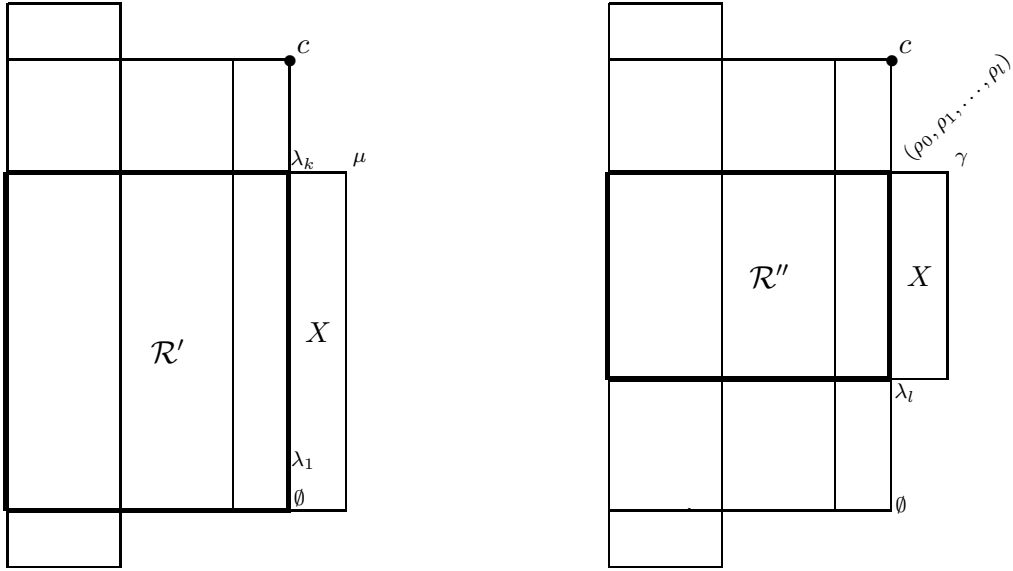

		$$
		\Einheit.25cm
		\PfadDicke{.5pt}
		\Pfad(0,0),111\endPfad
		\Pfad(3,0),111\endPfad
		\Pfad(3,18),111\endPfad
		\Pfad(0,24),111\endPfad
		\Pfad(3,0),222222222222222222222222\endPfad
		\Pfad(6,0),222222222222222222\endPfad
		\Pfad(0,24),555555555555\endPfad
		\Pfad(0,0),555555555555\endPfad
		\Pfad(-12,0),222222222222222222222222\endPfad
		\Pfad(3,18),555555555555555\endPfad
		\Pfad(-12,0),666\endPfad
		\Pfad(-12,24),222\endPfad
		\Pfad(-12,-3),111111\endPfad
		\Pfad(-12,27),111111\endPfad
		\Pfad(-6,27),666\endPfad
		\Pfad(-6,-3),222\endPfad
		\Pfad(-6,0),222222222222222222222222\endPfad
		\Pfad(0,0),222222222222222222222222\endPfad
		\PfadDicke{1.5pt}
		\Pfad(3,18),555555555555555\endPfad
		\Pfad(3,0),555555555555555\endPfad
		\Pfad(3,0),222222222222222222\endPfad
		\Pfad(-12,0),222222222222222222\endPfad
		\Label\ro{\text {\small$X$}}(4,9)
		\Label\ro{\text {\tiny$\lambda_k$}}(3.2,18.2)
		\Label\ro{\text {\tiny$\emptyset$}}(3,0.2)
		\Label\ro{\text {\tiny$\lambda_1$}}(3.2,2.2)
		\Label\ro{\text {\tiny$\mu$}}(6.2,18.2)
		\Label\ro{\text {\small$\bullet$}}(2.5,23.4)
		\Label\ro{\text {\small$c$}}(3.2,24.2)
		\Label\ro{\text {$\mathcal{R'}$}}(-4,8)
		\hbox{\hskip8cm}
		\Einheit.25cm
		\PfadDicke{.5pt}
		\Pfad(0,0),111\endPfad
		\Pfad(3,7),111\endPfad
		\Pfad(3,18),111\endPfad
		\Pfad(0,24),111\endPfad
		\Pfad(3,0),222222222222222222222222\endPfad
		\Pfad(6,7),22222222222\endPfad
		\Pfad(0,24),555555555555\endPfad
		\Pfad(0,0),555555555555\endPfad
		\Pfad(-12,0),222222222222222222222222\endPfad
		\Pfad(3,18),555555555555555\endPfad
		\Pfad(-12,0),666\endPfad
		\Pfad(-12,24),222\endPfad
		\Pfad(-12,-3),111111\endPfad
		\Pfad(-12,27),111111\endPfad
		\Pfad(-6,27),666\endPfad
		\Pfad(-6,-3),222\endPfad
		\Pfad(-6,0),222222222222222222222222\endPfad
		\Pfad(0,0),222222222222222222222222\endPfad
		\PfadDicke{1.5pt}
		\Pfad(3,18),555555555555555\endPfad
		\Pfad(3,7),555555555555555\endPfad
		\Pfad(3,7),22222222222\endPfad
		\Pfad(-12,7),22222222222\endPfad
		\Label\ro{\text {\small$X$}}(4,12)
		\Label\ro{\slanttext {\tiny$(\rho_0,\rho_1,\dots,\rho_l)$}}(6,21)
		\Label\ro{\text {\tiny$\gamma$}}(6.2,18.2)
		\Label\ro{\text {\tiny$\emptyset$}}(3,-0.2)
		\Label\ro{\text {\tiny$\lambda_l$}}(3.2,5.8)
		\Label\ro{\text {\small$\bullet$}}(2.5,23.4)
		\Label\ro{\text {\small$c$}}(3.2,24.2)
		\Label\ro{\text {$\mathcal{R''}$}}(-4,12)
		\hskip-2cm
		.$$
		\caption{An illustration of the $i$-th column and the ($i+1$)-th column.}
		\label{fig:columns}
	\end{figure}

	(1) If the $(i+1)$-th column is bottom justified with the $i$-th column (see the left side of Fig.\ref{fig:columns}), let the top-left corner of the $(i+1)$-th column be labeled $\lambda_k$. Let the sequence of partitions $(\emptyset,\lambda_1,\dots,\lambda_k)$ along the left border of the $(i+1)$-th column corresponds to the straight increasing tableau $T'$, and let the filling in $\mathcal{R'}$ corresponds to the word $w'$. Since $P(w)\equiv T$, by Lemma \ref{lemma4.8}, we have $$P(w')\equiv T'.$$ 
	
	Suppose that the filling of the {\it $(i+1)$-th} column is at the $r$-th row (from the bottom of the $(i+1)$-th column). Then the sequence of partitions along the right border of the $(i+1)$-th column corresponds to a straight increasing tableau $$U=(T'\xleftarrow{\,\mathrm{H}}r).$$
	
	On the other hand, the filling in the rectangular region below and to the left of the top-right corner of the $(i+1)$-th column corresponds to the word $u=w'\cdot r$ (where $u$ is formed by appending $r$ to the end of $w'$), and $$P(u)=P(w')\xleftarrow{\,\mathrm{H}}r.$$
	
	Therefore, by Lemma \ref{lemma4.7}, $$U\equiv P(u).$$
	
	Let the shape of $U$ is $\mu$, and let the shape of $P(u)$ is $\lambda$. By Lemma \ref{lamma4.10}, $\lambda$ and $\mu$ have the same length of the first row and the first column, respectively.

	(2) If the $(i+1)$-th column is not bottom justified with the $i$-th column (see the right side of Fig.\ref{fig:columns}), let the top-left corner of the $(i+1)$-th column be labeled by $(\rho_0,\rho_1,\dots,\rho_l)$. By the discuss of (1), $P(w')\equiv T'$. Therefore, by Lemma \ref{lemma4.9}, 
	$$\underbrace{jdt(\cdots jdt}_{l}(P(w')))\equiv \underbrace{jdt(\cdots jdt}_{l}(T')).$$
	
	Let the fillings in $\mathcal{R''}$ corresponds to the word $w''$. 
	By Lemma \ref{lemma4.6},
	$$\underbrace{jdt(\cdots jdt}_{l}(P(w')))\equiv P(w'').$$
	
	Thus, we have
	$$\underbrace{jdt(\cdots jdt}_{l}(T'))\equiv P(w'').$$
	
	Suppose that the filling of the $(i+1)$-th column is at the $s$-th row (from the bottom of the $(i+1)$-th column). Then the sequence of partitions along the right border of the $(i+1)$-th column corresponds to a straight increasing tableau
	$$V=\underbrace{jdt(\cdots jdt}_{l}(T'))\xleftarrow{\,\mathrm{H}}s.$$
	
	On the other hand, the filling in the rectangular region below and to the left of the top-right corner of the $(i+1)$-th corresponds to the word $v=w''\cdot s$, and $$P(v)=P(w'')\xleftarrow{\,\mathrm{H}}s.$$
	
	Therefore, by Lemma \ref{lemma4.7}, $$V\equiv P(v).$$
	
	Let the shape of $V$ be $\gamma$, and let the shape of $P(v)$ be $\lambda$. By Lemma \ref{lamma4.10}, $\lambda$ and $\gamma$ have the same length of the first row and the first column, respectively.
\end{proof}

Readers can see Fig.\ref{Hecke growth diagram} for an example. 

\begin{remark}
	For the proof of Theorem \ref{Th4.11}, we could not obtain $P(w)=T$ for every corner. Refer to the left side of Fig.\ref{fig:50}: when we complete this Hecke growth diagram, $c$ is labeled with the partition $\lambda=(3,2,1)$. However, the filling in the rectangular region below and to the left of $c$ has the Hecke insertion tableau as shown on the right side of Fig.\ref{fig:50}.
\end{remark}

	\begin{figure}[h]
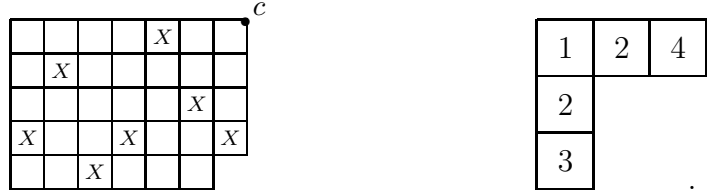

		$$
		\Einheit.15cm
		\PfadDicke{.5pt}
		\Pfad(0,0),111111111111111111\endPfad
		\Pfad(0,3),111111111111111111111\endPfad
		\Pfad(0,6),111111111111111111111\endPfad
		\Pfad(0,9),111111111111111111111\endPfad
		\Pfad(0,12),111111111111111111111\endPfad
		\Pfad(0,15),111111111111111111111\endPfad
		\Pfad(0,0),222222222222222\endPfad
		\Pfad(3,0),222222222222222\endPfad
		\Pfad(6,0),222222222222222\endPfad
		\Pfad(9,0),222222222222222\endPfad
		\Pfad(12,0),222222222222222\endPfad
		\Pfad(15,0),222222222222222\endPfad
		\Pfad(18,0),222222222222222\endPfad
		\Pfad(21,3),222222222222\endPfad
		\Label\ro{\text {\tiny$X$}}(1,4)
		\Label\ro{\text {\tiny$X$}}(4,10)
		\Label\ro{\text {\tiny$X$}}(7,1)
		\Label\ro{\text {\tiny$X$}}(10,4)
		\Label\ro{\text {\tiny$X$}}(13,13)
		\Label\ro{\text {\tiny$X$}}(16,7)
		\Label\ro{\text {\tiny$X$}}(19,4)
		\Label\ro{\text {\tiny$\bullet$}}(20.3,14.3)
		\Label\ro{\text {\small$c$}}(21.5,15.5)
		\hbox{\hskip7cm}
		\Einheit.25cm
		\PfadDicke{.5pt}
		\Pfad(0,0),222222222\endPfad
		\Pfad(3,0),222222222\endPfad
		\Pfad(6,6),222\endPfad
		\Pfad(9,6),222\endPfad
		\Pfad(0,0),111\endPfad
		\Pfad(0,3),111\endPfad
		\Pfad(0,6),111111111\endPfad
		\Pfad(0,9),111111111\endPfad
		\Label\ro{1}(1,7)
		\Label\ro{2}(1,4)
		\Label\ro{2}(4,7)
		\Label\ro{3}(1,1)
		\Label\ro{4}(7,7)
		\hskip2cm
		.$$
		\caption{An example for $P(w)\neq T$.}
		\label{fig:50}
	\end{figure}

Based on Theorem \ref{Th4.11} and Theorem \ref{Th2.1}, we have the following.

\begin{corollary}
	Let $\mathcal{M}$ be a stack polyomino. Given a 01-filling $\pi$ of $\mathcal{M}$ such that there is at most one $1$ in each column. The labellings of corners along the right-top border of the Hecke growth diagram $\mathcal{M}(\pi)$ determine the lengths of the longest increasing and decreasing chains of the largest rectangular region below and to the left of the corners.
	\label{corollary4.11}
\end{corollary}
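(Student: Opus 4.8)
The plan is to read the statement off directly from Theorem \ref{Th4.11} together with Theorem \ref{Th2.1}, so that the only real work lies in setting up the two translations these theorems provide. Fix a corner $c$ on the right/up boundary whose label is $(\lambda_0,\lambda_1,\dots,\lambda_t)$ (when the label is a single partition, read $t=0$, so that $\lambda_t$ is that partition), and let $\mathcal{R}$ be the largest rectangular region below and to the left of $c$, with associated word $w$ obtained from the filling of $\mathcal{R}$ as in Section 2.2. First I would record the purely geometric fact that, because $\mathcal{R}$ is a rectangle and hence convex, the smallest rectangle spanned by any two of its cells lies inside $\mathcal{R}$; consequently a longest increasing (northeast) chain of the $X$'s of $\mathcal{R}$ is exactly a longest strictly increasing subsequence of $w$, and a longest decreasing (southeast) chain is exactly a longest strictly decreasing subsequence. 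Thus the two chain lengths in question are $lis(w)$ and $lds(w)$.

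Next I would invoke Theorem \ref{Th2.1}: if $\lambda$ denotes the shape of the Hecke insertion tableau $P(w)$, then $lis(w)=c(P(w))$ is the length of the first row of $\lambda$, and $lds(w)=r(P(w))$ is the length of its first column. Finally, Theorem \ref{Th4.11} asserts that $\lambda$ and $\lambda_t$ share the same first-row length and the same first-column length. Chaining these equalities yields that the longest increasing chain of $\mathcal{R}$ has length equal to the first-row length of $\lambda_t$, and the longest decreasing chain of $\mathcal{R}$ has length equal to the first-column length of $\lambda_t$. Since $\lambda_t$ is the last partition appearing in the label of $c$, both quantities are determined by the labelling, which is precisely the assertion.

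I expect no serious obstacle, since the corollary is a formal consequence of the two cited theorems; the one point requiring care is the chain-to-subsequence dictionary of the first step. There one must verify that the convention encoding the filling of $\mathcal{R}$ as a word (an $X$ in column $i$ and row $w_i$, rows numbered from the bottom) sends northeast chains to strictly increasing subsequences and southeast chains to strictly decreasing ones, and in particular that two $X$'s sharing a row never both lie on a monotone chain, which matches the strictness built into $lis$ and $lds$ as well as the at-most-one-$1$-per-column hypothesis. Once this dictionary is pinned down, the remaining steps are immediate substitutions.
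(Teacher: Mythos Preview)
Your proposal is correct and takes exactly the same approach as the paper, which simply states that the corollary follows from Theorem~\ref{Th4.11} and Theorem~\ref{Th2.1} without further elaboration. Your write-up actually supplies more detail than the paper does, in particular the chain-to-subsequence dictionary and the identification of $\lambda_t$ as the relevant partition in the label, both of which the paper leaves implicit.
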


\section{Application of these two main theorems}

In this section, we use Theorem \ref{Th4.3} and Theorem \ref{Th4.11} to prove Corollary 2 of \cite{GP2020} and Theorem 4.2 of \cite{CGP}, and we explain how our Hecke growth diagram construction generalizes the growth diagram techniques of Rubey for stack polyominoes with at most one $1$ per column and row.

\subsection{An alternative proof of Corollary 2 from \cite{GP2020}}
$\newline$


Let $\mathcal{M}$ be a stack polyomino with left-justified rows. Let $\pi$ be a $01$-filling of $\mathcal{M}$ such that there is at most one $1$ in each column. A sequence of $1$'s in the filling $\pi$ is a $chain$ if the smallest rectangle containing all the $1$'s in the sequence is completely contained in $\mathcal{M}$. A chain is called an {\it ne-chain} if each 1 in the chain is strictly above and strictly to the right of all preceding $1$'s in the chain. Similarly, we define {\it se-chain}. The $length$ of a chain is the number of $1$'s in the chain.
Let $N(\mathcal{M};n;ne=u,se=v)$ denote the number of $01$-fillings of the stack polyomino $\mathcal{M}$ consisting of $n$ $1$'s, where the longest ne-chain is of length $u$ and the longest se-chain is of length $v$.

\begin{corollary}
	\cite[Corollary 2]{GP2020} Let $\mathcal{M}$ be a stack polyomino. Then 
	$$N(\mathcal{M};n;ne=u,se=v)=N(\mathcal{M};n;ne=v,se=u).$$
\end{corollary}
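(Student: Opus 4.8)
The plan is to exhibit a symmetry on $01$-fillings of $\mathcal{M}$ that preserves the number of $1$'s while interchanging the roles of the longest ne-chain and the longest se-chain, and to realize this symmetry on the combinatorial data living along the top-right border. By Theorem \ref{Th4.3}, a $01$-filling $\pi$ of $\mathcal{M}$ with at most one $1$ per column is encoded bijectively by the labelling $(\emptyset=l_0,l_1,\dots,l_n=\emptyset)$ along the right/up boundary, and by Corollary \ref{corollary4.11} these labels record, at each corner $c$, the pair $(\lis,\lds)$ of the word $w$ associated to the largest rectangular region $\mathcal{R}$ below and to the left of $c$. The crucial point is that the longest ne-chain and the longest se-chain of $\pi$ correspond to the longest strictly increasing and strictly decreasing subsequences of the word read off the whole polyomino, so reading off the top-right corner gives exactly $\lis=u$ and $\lds=v$ for the global filling.

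First I would make precise the passage from a $01$-filling with one $1$ per column to a word: reading the columns from left to right and recording the row index of the $1$ in each column (skipping empty columns) produces a word $w$, and a ne-chain of $\pi$ becomes a strictly increasing subsequence of $w$ while an se-chain becomes a strictly decreasing subsequence. Thus $\max$-ne$(\pi)=\lis(w)$ and $\max$-se$(\pi)=\lds(w)$. Next I would invoke Theorem \ref{Th2.1} together with Theorem \ref{Th4.11}: the label $\lambda$ at the terminal corner has first-row length equal to $\lis(w)=u$ and first-column length equal to $\lds(w)=v$, via the shape of the Hecke insertion tableau $P(w)$, where $c(P)=\lis$ and $r(P)=\lds$.

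The symmetry itself comes from transposition of the recorded shapes. Concretely, I would define an involution $\Phi$ on the admissible label sequences of Theorem \ref{Th4.3} that conjugates every partition appearing as a label (and transposes each sequence-of-partitions label entrywise, reversing the roles of rows and columns in the edge-labels $\upsilon_m^r$). Because Theorem \ref{Th4.3} characterizes the admissible sequences purely through containment, rook-strip, and single-box conditions---all of which are invariant under conjugation $\lambda\mapsto\lambda'$---the map $\Phi$ sends admissible sequences to admissible sequences and is its own inverse. Transporting $\Phi$ through the bijection of Theorem \ref{Th4.3} yields an involution on the $01$-fillings themselves, which preserves the number $n$ of $1$'s (the total number of $X$'s is the number of boxes swept in the growth process, unchanged by conjugation) and swaps the terminal first-row and first-column lengths, hence swaps $u$ and $v$. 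Summing over all fillings then gives the claimed equality $N(\mathcal{M};n;ne=u,se=v)=N(\mathcal{M};n;ne=v,se=u)$.

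The main obstacle I anticipate is verifying that conjugation is compatible with the sequence-of-partitions labels and their edge decorations, i.e. that $\Phi$ genuinely produces an admissible label sequence and corresponds to a $01$-filling with the same number of $1$'s. The delicate case is a corner sitting above a turn, labeled by a tuple $(\lambda_0,\dots,\lambda_k)$ built from iterated jeu de taquin maps: I must check that transposing each $\lambda_i$ still satisfies the out-corner condition of Theorem \ref{Th4.3} and that the integer $r$ in a decoration $\upsilon_m^r$, which is bounded by the number of rows of $\upsilon_m$, is correctly reinterpreted as a bound by the number of columns after conjugation. Here I expect to need that the jeu de taquin map and Hecke insertion interact with transposition in the expected row/column-symmetric way; if a clean transposition symmetry for the jeu de taquin map is not immediate, I would instead argue indirectly, keeping the filling-level involution implicit and deducing equality of the two counts from the fact that the joint distribution of $(\lis,\lds)$ over the relevant words is symmetric, which follows from the Hecke insertion shape being determined by $(c(P),r(P))$ together with the evident symmetry obtained by transposing all tableaux in sight.
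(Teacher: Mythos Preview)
Your overall strategy---encode the filling by the boundary labelling via Theorem \ref{Th4.3}, conjugate all partitions (with the $r$-decoration reinterpreted through the position of the terminate box), and decode---is exactly the paper's proof. The paper makes the same move and handles the decoration $\lambda^r$ just as you anticipate, by transposing the shape while keeping the terminate box in the same physical position.

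There is, however, a genuine error in your reduction to a single word. You write that ``reading off the top-right corner gives exactly $\lis=u$ and $\lds=v$ for the global filling'' and that an ne-chain of $\pi$ is the same thing as a strictly increasing subsequence of the global word $w$. This is false for a non-rectangular stack polyomino: by definition a chain must sit inside a rectangle entirely contained in $\mathcal{M}$, so an increasing subsequence of $w$ whose bounding rectangle sticks out of $\mathcal{M}$ is \emph{not} an ne-chain. There is no single corner $c$ whose rectangle $\mathcal{R}$ equals all of $\mathcal{M}$. What is true is that every chain of $\pi$ lies in some maximal rectangle of $\mathcal{M}$, and each maximal rectangle is precisely the region below-left of one of the boundary corners. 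Hence
\[
u=\max_{c}\bigl(\text{first-row length of the label at }c\bigr),\qquad
v=\max_{c}\bigl(\text{first-column length of the label at }c\bigr),
\]
the maximum taken over all corners along the right/up boundary, via Theorem \ref{Th4.11} and Theorem \ref{Th2.1} applied corner by corner. Conjugating every label swaps first-row and first-column lengths at \emph{each} corner simultaneously, and therefore swaps the two maxima. Once you replace the ``terminal corner'' claim by this corner-by-corner argument, your proof goes through and coincides with the paper's.
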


\begin{proof}
We define a bijection between the $01$-fillings counted by $N(\mathcal{M};n;ne=u,se=v)$ and those counted by $N(\mathcal{M};n;ne=v,se=u)$. Consider a $01$-filling $\pi$ counted by
$N(\mathcal{M};n;ne=u,se=v)$. From $\pi$, we derive the sequence $(\emptyset=l_0,l_1,\dots,l_{n}=\emptyset)$ along the right/up boundary of $\mathcal{M}$ by the Hecke growth diagram $\mathcal{M}(\pi)$. 
Next, we apply the inverse mapping of the proof of Theorem \ref{Th4.3} to the sequence $(\emptyset=l_0^{\prime},l_1^{\prime},\dots,l_{n}^{\prime}=\emptyset)$ of conjugate partitions, where $l_j^{\prime}$ is a conjugate partition of $l_j$ 
for $0\leq j\leq n$. Note that for partition $\lambda^r$, we define its conjugation by conjugating $\lambda$ while preserving the position of terminate box, an example is shown in Fig.\ref{fig:51}. 
Thus, we obtain a $01$-filling $\pi^{\prime}$ counted by $N(\mathcal{M};n;ne=v,se=u)$. See Fig.\ref{fig:23} for an example. 

\end{proof}

\begin{figure}[h]
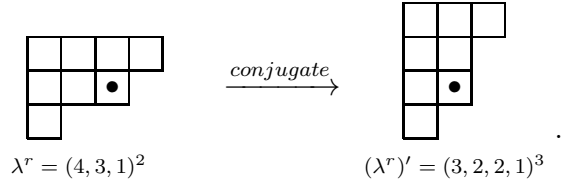

	$$
	\Einheit.15cm
	\PfadDicke{.5pt}
	\Pfad(12,6),222\endPfad
	\Pfad(9,3),222222\endPfad
	\Pfad(6,3),222222\endPfad
	\Pfad(3,0),222222222\endPfad
	\Pfad(0,0),222222222\endPfad
	\Pfad(0,9),111111111111\endPfad
	\Pfad(0,6),111111111111\endPfad
	\Pfad(0,3),111111111\endPfad
	\Pfad(0,0),111\endPfad
	\Label\ro{\text {\small$\bullet$}}(7,4)
	\Label\ro{\text {\tiny$\lambda^r=(4,3,1)^2$}}(4,-3)
	\hbox{\hskip3cm}
	\Label\ro{\xrightarrow{\text{$conjugate$}}}(2,5)
	\hbox{\hskip2cm}
	\Einheit.15cm
	\PfadDicke{.5pt}
	\Pfad(9,9),222\endPfad
	\Pfad(6,3),222222222\endPfad
	\Pfad(3,0),222222222222\endPfad
	\Pfad(0,0),222222222222\endPfad
	\Pfad(0,12),111111111\endPfad
	\Pfad(0,9),111111111\endPfad
	\Pfad(0,6),111111\endPfad
	\Pfad(0,3),111111\endPfad
	\Pfad(0,0),111\endPfad
	\Label\ro{\text {\small$\bullet$}}(4,4)
	\Label\ro{\text {\tiny$(\lambda^r)^{\prime}=(3,2,2,1)^3$}}(4,-3)
	\hskip2cm
	.$$
	\caption{The conjugate of $(4,3,1)^2$.}
	\label{fig:51}
\end{figure}

\begin{figure}
	$$
	\begin{tikzpicture}[scale=1.2]
		\node (00) at (0,0) {$\emptyset$};
		\node (10) at (1,0) {$\emptyset$};
		\node (20) at (2,0) {$\emptyset$};
		\node (30) at (3,0) {$\emptyset$};
		\node (01) at (0,1) {$\emptyset$};
		\node (11) at (1,1) {$1$};
		\node (21) at (2,1) {$1$};
		\node (31) at (3,1) {$1$};
		\node (02) at (0,2) {$\emptyset$};
		\node (12) at (1,2) {$1$};
		\node (22) at (2,2) {$1$};
		\node (32) at (3,2) {$1$};
		\node (42) at (4,2) {$\emptyset$};
		\node (52) at (5,2) {$\emptyset$};
		\node (62) at (6,2) {$\emptyset$};
		\node (03) at (0,3) {$\emptyset$};
		\node (13) at (1,3) {$1$};
		\node (23) at (2,3) {$1$};
		\node (33) at (3,3) {$1$};
		\node (43) at (4,3) {$\emptyset$};
		\node (53) at (5,3) {$\emptyset$};
		\node (63) at (6,3) {$\emptyset$};
		\node (73) at (7,3) {$\emptyset$};
		\node (04) at (0,4) {$\emptyset$};
		\node (14) at (1,4) {$1$};
		\node (24) at (2,4) {$1$};
		\node (34) at (3,4) {$1$};
		\node (44) at (4,4) {$\emptyset$};
		\node (54) at (5,4) {$\emptyset$};
		\node (64) at (6,4) {$1$};
		\node (74) at (7,4) {$1$};
		\node (84) at (8,4) {$\emptyset$};
		
		\node (05) at (0,5) {$\emptyset$};
		\node (15) at (1,5) {$1$};
		\node (25) at (2,5) {$1$};
		\node (35) at (3,5) {$1$};
		\node (45) at (4,5) {$\emptyset$};
		\node (55) at (5,5) {$\emptyset$};
		\node (65) at (6,5) {$1$};
		\node (75) at (7,5) {$1$};
		\node (85) at (8,5) {$1$};
		
		\node (06) at (0,6) {$\emptyset$};
		\node (16) at (1,6) {$1$};
		\node (26) at (2,6) {$1$};
		\node (36) at (3,6) {$1$};
		\node (46) at (4,6) {$1$};
		\node (56) at (5,6) {$1$};
		\node (66) at (6,6) {\tiny $(11,11)$};
		\node (76) at (7,6) {\tiny $(1,21)$};
		\node (86) at (8,6) {$11$};
		
		\node (07) at (0,7) {$\emptyset$};
		\node (17) at (1,7) {$1$};
		\node (27) at (2,7) {$1$};
		\node (37) at (3,7) {\tiny $(1,1,2)$};
		\node (47) at (4,7) {$11$};
		\node (57) at (5,7) {$21$};
		
		\node (08) at (0,8) {$\emptyset$};
		\node (18) at (1,8) {$1$};
		\node (28) at (2,8) {$2$};
		\node (38) at (3,8) {$21$};
		
		\node at (0.5,0.5) {$X$};
		\node at (1.5,7.5) {$X$};
		\node at (2.5,6.5) {$X$};
		\node at (3.5,5.5) {$X$};
		\node at (4.5,6.5) {$X$};
		\node at (5.5,3.5) {$X$};
		\node at (6.5,5.5) {$X$};
		\node at (7.5,4.5) {$X$};
		

		\draw (00)--(10)--(20)--(30)
		(01)--(11)--(21)--(31)
		(02)--(12)--(22)--(32)--(42)--(52)--(62)
		(03)--(13)--(23)--(33)--(43)--(53)--(63)--(73)
		(04)--(14)--(24)--(34)--(44)--(54)--(64)--(74)--(84)
		(05)--(15)--(25)--(35)--(45)--(55)--(65)--(75)--(85)
		(06)--(16)--(26)--(36)--(46)--(56)--(66)--(76)--(86)
		(07)--(17)--(27)--(37)--(47)--(57)
		(08)--(18)--(28)--(38)
		
		(00)--(01)--(02)--(03)--(04)--(05)--(06)--(07)--(08)
		(10)--(11)--(12)--(13)--(14)--(15)--(16)--(17)--(18)
		(20)--(21)--(22)--(23)--(24)--(25)--(26)--(27)--(28)
		(30)--(31)--(32)--(33)--(34)--(35)--(36)--(37)--(38)
		(42)--(43)--(44)--(45)--(46)--(47)
		(52)--(53)--(54)--(55)--(56)--(57)
		(62)--(63)--(64)--(65)--(66)
		(73)--(74)--(75)--(76)
		(84)--(85)--(86);

	\end{tikzpicture}
	$$
	\caption{An example of constructing $\pi^\prime$ from $\pi$ of Fig.\ref{Hecke growth diagram}.}
	\label{fig:23}
\end{figure}
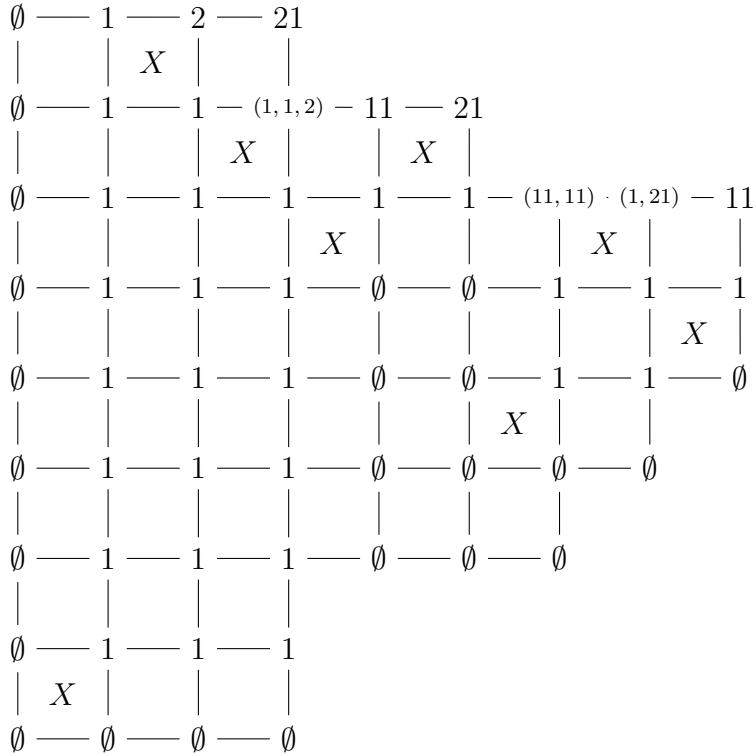

\subsection{An alternative proof of Theorem 4.2 from \cite{CGP}}
$\newline$

A {\it linked partition} of [$n$] is a collection of nonempty subsets $B_1,B_2,\dots,B_k$ of [$n$], called {\it blocks}, such that the union of $B_1,B_2,\dots,B_k$ is [$n$] and any two distinct blocks are nearly disjoint. Two distinct blocks $B_i$ and $B_j$ are said to be nearly disjoint if for any $t\in B_i\cap B_j$, one of the following conditions holds:

\begin{itemize}
	\item [(1)] $t$=min($B_i$), $|B_i|>1$, and $t\neq$min($B_j$),
	\item [(2)] $t$=min($B_j$), $|B_j|>1$, and $t\neq$min($B_i$).
\end{itemize}

Given a linked partition $P$ of [$n$], a block $\{i_1,i_2,\dots,i_m\}$, $i_1<i_2<\cdots<i_m$, of $P$ is represented by the set of pairs $\{(i_1,i_2),(i_1,i_3),\dots,(i_1,i_m)\}$. More generally, a linked partition is represented by the union of all set of pairs, the union being taken over all its blocks. This representation is called the {\it standard representation} of the linked partition. For example, the linked partition $\{\{1,3,6\},\{2,5,8\},\{4\},\{5,9\},\{6,7\}\}$ is represented as the set $\{(1,3),(1,6),(2,5),(2,8),(5,9),(6,7)\}$. Conversely, we can  uniquely obtain the linked partition from its standard representation.

Let $P$ be a linked partition of [$n$], a subset $\{(i_1,j_1),(i_2,j_2),\dots,(i_k,j_k)\}$ of its standard representation form a {\it $k$-crossing} if
$$i_1<i_2<\cdots<i_k<j_1<j_2<\cdots<j_k.$$
Similarly, we say that $\{(i_1,j_1),(i_2,j_2),\dots,(i_k,j_k)\}$ form a {\it $k$-nesting} if
$$i_1<i_2<\cdots<i_k<j_k<\cdots<j_2<j_1.$$
We write $cross(P)$ for the maximal number $k$ such that $P$ has a $k$-crossing, and we write $nest(P)$ for the maximal number $k$ such that $P$ has a $k$-nesting. 

For the linked partition $P$, let $compl(P)$ be the set of the left components of the pairs of its standard representation, and let $compr(P)$ be the set of the right components of the pairs of its standard representation. For example, let $P=\{\{1,3,6\},\{2,5,8\},\{4\},\{5,9\},\{6,7\}\}$, then $compl(P)=\{1,2,5,6\}$, and $compr(P)=\{3,5,6,7,8,9\}$.

Based on the definitions above, we recall \cite[Theorem 4.2]{CGP} as follows.

\begin{theorem}\cite[Theorem 4.2]{CGP}
	Let $n,x,y$ be the positive integers, and let $S$ and $T$ be two subset of $[n]$. Then the number of linked partitions $P$ of $[n]$ with $cross(P)=x$, $nest(P)=y$, $compl(P)=S$, $compr(P)=T$ is equal to the number of linked partitions of $[n]$ with $cross(P)=y$, $nest(P)=x$, $compl(P)=S$, $compr(P)=T$.
	\label{Th4.15}
\end{theorem}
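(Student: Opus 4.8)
The plan is to reduce Theorem \ref{Th4.15} to a statement about $01$-fillings and then run the very same conjugation involution that proved \cite[Corollary 2]{GP2020} in the previous subsection, this time keeping track of the extra data $S=compl(P)$ and $T=compr(P)$. Following \cite{CGP}, I would encode a linked partition $P$ of $[n]$ by its standard representation, placing an $X$ in cell $(i,j)$ exactly when $(i,j)$ is an arc, with rows indexed by the left component $i$ and columns by the right component $j$; the underlying board is then the triangular (staircase) Ferrers shape $\{(i,j):1\le i<j\le n\}$, which is a stack polyomino. What makes this encoding land in the setting of Theorem \ref{Th4.3} is the near-disjointness axiom: if a value $t$ were a non-minimal element of two blocks $B,B'$, then $t\in B\cap B'$ would be the minimum of neither, contradicting near-disjointness; hence every value is the right endpoint of \emph{at most one} arc, so the filling has at most one $1$ per column. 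Under this encoding the set of occupied columns is exactly $T$, the set of occupied rows is exactly $S$, and (with the orientation fixed as in \cite{CGP}) $cross(P)$ and $nest(P)$ are the lengths of the longest ne-chain and se-chain. Theorem \ref{Th4.15} thus follows once I produce an involution on fillings of this fixed shape that interchanges the longest ne- and se-chain lengths while preserving both occupancy sets.

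For the involution I would reuse the construction of the alternative proof of \cite[Corollary 2]{GP2020}: build the Hecke growth diagram $\mathcal{M}(\pi)$ via Theorem \ref{Th4.3}, replace every partition along the top-right boundary by its conjugate while keeping the position of each terminate box (the convention of Figure \ref{fig:51}), and recover the resulting filling $\pi'$ through the backward local rules. Since the stack polyomino is held fixed, $\pi'$ is again a filling of the same triangular board and so encodes a linked partition $P'$ of $[n]$. By Corollary \ref{corollary4.11} the boundary labels record the lengths of the longest increasing and decreasing chains of each largest rectangular region, and conjugating them interchanges these two statistics globally; hence $cross(P')=nest(P)$ and $nest(P')=cross(P)$. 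As conjugation of partitions is an involution and the terminate-box convention is self-inverse, $\pi\mapsto\pi'$ is an involution, which will give the required bijection as soon as $S$ and $T$ are shown to be invariant.

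The hard part will be precisely this invariance. The guiding model is the rectangular case of Theorem \ref{Th3.4}, where a column is occupied iff, reading the top boundary left to right, either the two consecutive partitions differ or the edge between them carries a label, and a row is occupied iff the two consecutive partitions across the corresponding vertical boundary edge differ. Both characterizations are \emph{edge-local}, and conjugation preserves, at each boundary edge, the two relevant bits: whether consecutive partitions differ (because $\alpha\subseteq\beta$ with $\alpha=\beta$ iff $\alpha'=\beta'$, and $\beta/\alpha$ is a box, resp.\ a rook strip of size $k$, iff $\beta'/\alpha'$ is, cf.\ Claim \ref{cl:3.3}) and whether an edge label (equivalently, a retained terminate box) is present. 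Consequently the horizontal edges detect the same set of occupied columns after conjugation, so $T$ is unchanged, and the vertical edges detect the same set of occupied rows, so $S$ is unchanged; that the occupied rows are read off in this way can be justified through Lemma \ref{pro3} together with the fact that all four $K$-Knuth relations preserve the support of a word, so that the occupied rows coincide with the entry set of the relevant insertion tableau.

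The main technical obstacle I anticipate is upgrading the edge-local argument from the rectangular rules to the staircase board, where almost every corner is a turn labelled by a \emph{sequence} of partitions and the left border of each column is produced by iterated jeu de taquin rather than by the plain forward rules. There the "partitions differ versus coincide'' and "terminate box retained'' dichotomies must be checked against the jeu de taquin steps of Theorem \ref{Th4.3} and the conjugation prescription of Figure \ref{fig:51}, not merely against the rectangular local rules; this bookkeeping at turns is the crux. Once it is verified, pushing the involution $\pi\mapsto\pi'$ back through the \cite{CGP} encoding yields a linked partition $P'$ with $cross(P')=nest(P)$, $nest(P')=cross(P)$, $compl(P')=S$ and $compr(P')=T$, which is exactly the bijection asserted by Theorem \ref{Th4.15}.
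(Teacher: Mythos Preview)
Your approach is the paper's: encode $P$ as a $01$-filling of $\Delta_n$, run the conjugation involution on the Hecke growth diagram boundary exactly as in the previous subsection, and decode the resulting filling as a linked partition $P'$, arguing that occupied rows and columns are detected edge-locally on the boundary and hence preserved under conjugation. The one point where you overshoot is the anticipated ``main technical obstacle'': in $\Delta_n$ every column starts at the bottom row, so adjacent columns are always bottom-justified and, by the construction of Section~4.2, the diagram has \emph{no} turns and every boundary corner carries a single partition rather than a sequence---the jeu de taquin bookkeeping you worry about simply does not occur for this shape, and the rectangular edge-local argument from Theorem~\ref{Th3.4} already suffices (the paper itself just asserts the row/column preservation without further comment).
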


In \cite{GP2020}, Guo and Poznanovi\'c established a bijection between linked partitions on [$n$] and fillings of $\Delta_n$, the triangular shape with $n-1$ cells in the bottom row, $n-2$ cells in the row above, etc., and 1 cell in the top-most row. See Fig.\ref{fig:24} for an example in which $n=7$. (The filling and labeling of the corners should be ignored at this point. For convenience, we also joined pending edges at the right and at the top of $\Delta_n$.) We represent a linked partition $P$ of [$n$], given by its standard representation, as a filling, by putting an $X$ to the box in row $i$ (from bottom to top) and column $j$ (counted from right to left, including one empty column) if and only if $(i,j)$ is a pair in the standard representation of $P$. See Fig.\ref{fig:24}, this defines a correspondence between linked partition $\{\{1,2,3,5,6\},\{2,4,7\}\}$ and 01-filling of $\Delta_7$ with at most one 1 in each column. Moreover, a $k$-crossing of $P$ corresponds to a $se$-chain of length $k$ in $\Delta_n$, and a $k$-nesting of $P$ corresponds to a $ne$-chain of length $k$ in $\Delta_n$.

\begin{figure}
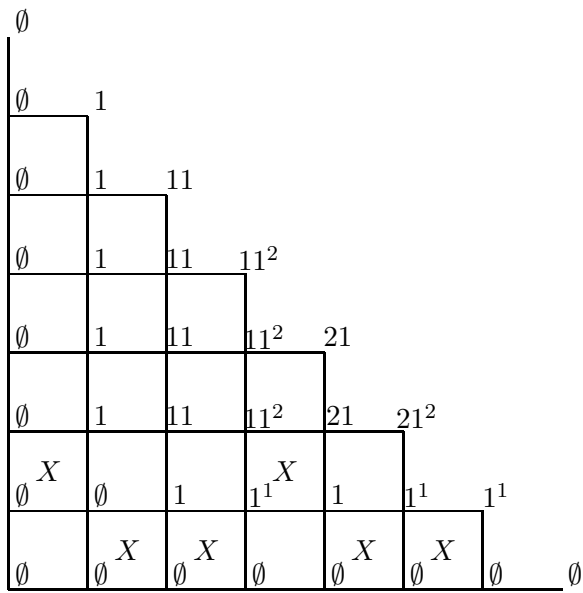

	$$
	\Einheit.35cm
	\PfadDicke{.5pt}
	\Pfad(0,0),222222222222222222222\endPfad
	\Pfad(3,0),222222222222222222\endPfad
	\Pfad(6,0),222222222222222\endPfad
	\Pfad(9,0),222222222222\endPfad
	\Pfad(12,0),222222222\endPfad
	\Pfad(15,0),222222\endPfad
	\Pfad(18,0),222\endPfad
	\Pfad(0,0),111111111111111111111\endPfad
	\Pfad(0,3),111111111111111111\endPfad
	\Pfad(0,6),111111111111111\endPfad
	\Pfad(0,9),111111111111\endPfad
	\Pfad(0,12),111111111\endPfad
	\Pfad(0,15),111111\endPfad
	\Pfad(0,18),111\endPfad
	\Label\ro{\text {\small$\emptyset$}}(0,0.1)
	\Label\ro{\text {\small$\emptyset$}}(0,3.1)
	\Label\ro{\text {\small$\emptyset$}}(0,6.1)
	\Label\ro{\text {\small$\emptyset$}}(0,9.1)
	\Label\ro{\text {\small$\emptyset$}}(0,12.1)
	\Label\ro{\text {\small$\emptyset$}}(0,15.1)
	\Label\ro{\text {\small$\emptyset$}}(0,18.1)
	\Label\ro{\text {\small$\emptyset$}}(0,21.1)
	\Label\ro{\text {\small$\emptyset$}}(3,0.1)
	\Label\ro{\text {\small$\emptyset$}}(3,3.1)
	\Label\ro{\text {\small$1$}}(3,6.1)
	\Label\ro{\text {\small$1$}}(3,9.1)
	\Label\ro{\text {\small$1$}}(3,12.1)
	\Label\ro{\text {\small$1$}}(3,15.1)
	\Label\ro{\text {\small$1$}}(3,18.1)
	\Label\ro{\text {\small$\emptyset$}}(6,0.1)
	\Label\ro{\text {\small$1$}}(6,3.1)
	\Label\ro{\text {\small$11$}}(6,6.1)
	\Label\ro{\text {\small$11$}}(6,9.1)
	\Label\ro{\text {\small$11$}}(6,12.1)
	\Label\ro{\text {\small$11$}}(6,15.1)
	\Label\ro{\text {\small$\emptyset$}}(9,0.1)
	\Label\ro{\text {\small$1^1$}}(9.1,3.1)
	\Label\ro{\text {\small$11^2$}}(9.2,6.1)
	\Label\ro{\text {\small$11^2$}}(9.2,9.1)
	\Label\ro{\text {\small$11^2$}}(9,12.1)
	\Label\ro{\text {\small$\emptyset$}}(12,0.1)
	\Label\ro{\text {\small$1$}}(12,3.1)
	\Label\ro{\text {\small$21$}}(12.1,6.1)
	\Label\ro{\text {\small$21$}}(12,9.1)
	\Label\ro{\text {\small$\emptyset$}}(15,0.1)
	\Label\ro{\text {\small$1^1$}}(15,3.1)
	\Label\ro{\text {\small$21^2$}}(15,6.1)
	\Label\ro{\text {\small$\emptyset$}}(18,0.1)
	\Label\ro{\text {\small$1^1$}}(18,3.1)
	\Label\ro{\text {\small$\emptyset$}}(21,0.1)
	\Label\ro{\text {\small$X$}}(1,4)
	\Label\ro{\text {\small$X$}}(4,1)
	\Label\ro{\text {\small$X$}}(7,1)
	\Label\ro{\text {\small$X$}}(10,4)
	\Label\ro{\text {\small$X$}}(13,1)
	\Label\ro{\text {\small$X$}}(16,1)
	\hskip6cm
	$$
	\caption{The linked partition $\{\{1,2,3,5,6\},\{2,4,7\}\}$ and the corresponding $01$-filling.}
	\label{fig:24}
\end{figure}

\begin{proof}[Proof of Theorem \ref{Th4.15}]
	Given a linked partition $P$ of [$n$] such that $cross(P)=x$, $nest(P)=y$, $compl(P)=S$, $compr(P)=T$, we associate $P$ with a 01-filling $\pi$ of $\Delta_n$. By the definition of nearly disjoint, each column of the 01-filling $\pi$ contains at most one 1 in each column. We then construct the Hecke growth diagram of $\pi$. Next, we conjugate all partitions along the top-right border of $\Delta_n$ and reconstruct the filling of $\Delta_n$. We denote the new filling as $\omega$, which corresponds to a new linked partition $P'$. It is easy to observe that $cross(P')=y$ and $nest(P')=x$. Since $\omega$ has $X$ in row $i$ if and only if $\pi$ has $X$ in row $i$, and $\omega$ has $X$ in column $j$ if and only if $\pi$ has $X$ in column $j$, we have
	$compl(P')=S$, $compr(P')=T$. Therefore, we conclude the proof of Theorem \ref{Th4.15}.
\end{proof}

\subsection{Generalization of growth diagram for stack polyomino in \cite{Rubey}.}
$\newline$

In \cite{Rubey}, Rubey consider the $01$-fillings of the stack polyomino $\mathcal{M}$ with at most one $1$ in each row and column, and provides a bijection between the set of fillings of $\mathcal{M}$ and sequences of partitions along right/up boundary of $\mathcal{M}$. (cf. \cite[Proposition 7.4]{Rubey}.) In this paper, Theorem \ref{Th4.3} extends the bijection to words.

Since the Hecke insertion algorithm is a generalization of the RSK algorithm, for a permutation $\pi$, its Hecke insertion tableau is identical to its RSK insertion tableau.
Furthermore, if $\pi$ is a permutation, the following proposition holds:

\begin{proposition}
	Let $\omega$ be the permutation obtained from $\pi$ by deleting $1$ and decreasing all remaining elements by $1$. Then $P(\omega)=jdt(P(\pi))$.
\end{proposition}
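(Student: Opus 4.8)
The plan is to exploit the hypothesis that $\pi$ is a permutation, so that Hecke insertion collapses to ordinary RSK insertion, both $P(\pi)$ and $P(\omega)$ are standard Young tableaux, and the jeu de taquin map of Case~1 in Section~4.1 is the classical forward slide that empties the corner holding the entry $1$ and then decreases every entry by $1$. Keeping everything standard is the whole point: it lets me invoke the unique-rectification property of classical jeu de taquin, which is exactly the feature that fails for general increasing tableaux.

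First I would recast the statement as an instance of Lemma~\ref{lemma4.6}. Writing $\pi$ as a word, let $w'$ be obtained from $\pi$ by deleting the single letter equal to $1$; since $\pi$ is a permutation of $[n]$, the letters of $w'$ are exactly $\{2,\dots,n\}$, and $\omega$ is obtained from $w'$ by the order-preserving relabeling $x\mapsto x-1$. Because insertion depends only on the relative order of the letters, this relabeling commutes with insertion, so $P(\omega)=P(w')-1$, where $P(w')-1$ means decreasing every entry by $1$. Applying Lemma~\ref{lemma4.6} with $w=\pi$ and $[m]=[1]$ then gives $P(w')-1\equiv jdt(P(\pi))$, and hence $P(\omega)\equiv jdt(P(\pi))$ in the sense of $K$-Knuth equivalence.

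It remains to upgrade this $K$-Knuth equivalence to an honest equality, and this is the step where the permutation hypothesis is essential and where I expect the only genuine difficulty to lie. Both $P(\omega)$ and $jdt(P(\pi))$ are straight-shape standard tableaux (the former is an RSK insertion tableau, the latter is standard by the remark immediately following the definition of $jdt$ in Case~1). By Theorem~\ref{pro2}, the relation $P(\omega)\equiv jdt(P(\pi))$ says that they are $K$-jeu de taquin equivalent, i.e. joined by a chain of jeu de taquin and reverse jeu de taquin slides. For standard tableaux these are the classical slides, for which rectification is independent of the order in which slides are performed; since a straight-shape tableau is its own rectification, two such equivalent straight-shape standard tableaux must coincide. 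This yields $P(\omega)=jdt(P(\pi))$.

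The main obstacle is precisely this final promotion from equivalence to equality: for general increasing tableaux $K$-Knuth equivalence does \emph{not} force equality, because unique rectification can fail, so the argument must genuinely use that $\pi$ is a permutation in order to land in the classical, uniquely-rectifiable setting (rather than merely citing Lemma~\ref{lamma4.10}, which only matches first rows and first columns). Before writing the details I would sanity-check the reduction on small cases such as $\pi=213$ and $\pi=312$, where one verifies directly that the slide emptying the corner of $1$ followed by decrementing reproduces $P(\omega)$.
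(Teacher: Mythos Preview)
The paper does not give an independent argument for this proposition; it simply cites Stanley's \emph{Enumerative Combinatorics}, Corollary~A.1.2.6 and the proof of A.1.2.10, treating the statement as a standard fact from the classical theory of RSK and Sch\"utzenberger's jeu de taquin. Your approach is genuinely different: you route the argument through the paper's $K$-theoretic machinery, using Lemma~\ref{lemma4.6} to obtain $P(\omega)\equiv jdt(P(\pi))$ and then attempting to upgrade this $K$-Knuth equivalence to an equality.

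The reduction to $P(\omega)\equiv jdt(P(\pi))$ is correct, but the promotion step has a real gap. You assert that ``for standard tableaux these are the classical slides,'' so that classical unique rectification applies. This is not justified: a Thomas--Yong slide may use a \emph{set} of corners, and even a single reverse slide applied to a straight-shape standard Young tableau can leave the standard world. For instance, take the SYT with first row $1,2$ and second row $3$; a reverse $K$-jdt slide using both outer corners $(2,2)$ and $(3,1)$ simultaneously produces a skew tableau containing two copies of~$3$. Hence the chain of $K$-jdt moves furnished by Theorem~\ref{pro2} connecting $P(\omega)$ and $jdt(P(\pi))$ need not stay among standard tableaux, and the classical confluence theorem does not apply directly. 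What your argument actually requires is that every straight-shape standard Young tableau is a \emph{unique rectification target} in the sense of Buch--Samuel, or equivalently that two $K$-Knuth equivalent straight-shape SYTs must coincide; this is a nontrivial statement, not established anywhere in the paper, and your proposal does not supply a proof of it (Lemma~\ref{lamma4.10} controls only the first row and first column, which already fails to pin down the shape once $|\lambda|\ge 7$, e.g.\ $(3,3,1)$ versus $(3,2,2)$). The paper sidesteps this issue entirely by appealing to the classical theory, where unique rectification is available from the outset.
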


This proposition is a consequence of \cite[Corollary A.1.2.6]{Stanley}, as noted in the proof of \cite[A.1.2.10]{Stanley}. Therefore, for Theorem \ref{Th4.11}, if $\pi$ corresponds to a permutation, then $\lambda=\lambda_t$.

\end{document}